\theoremstyle{plain}
\newtheorem{theorem}{Theorem}[section]
\newtheorem{proposition}[theorem]{Proposition}
\newtheorem{corollary}[theorem]{Corollary}
\newtheorem{lemma}[theorem]{Lemma}
\theoremstyle{definition}
\newtheorem{remark}[theorem]{Remark}
\newtheorem{example}[theorem]{Example}
\newtheorem{definition}[theorem]{Definition}
\newcommand{\abs}[1]{\lvert#1\rvert}
\newcommand{\norm}[1]{\lVert#1\rVert}
\newcommand{\bigabs}[1]{\bigl\lvert#1\bigr\rvert}
\newcommand{\term}[1]{{\textit{\textbf{#1}}}}   
\renewcommand{\mid}{\::\:}
\newcommand{\goestau}{\xrightarrow{\tau}}	
\newcommand{\goeseta}{\xrightarrow{\eta}}	
\newcommand{\goeso}{\xrightarrow{\mathrm{o}}}	
\newcommand{\goesm}{\xrightarrow{\mathrm{m}}}
\newcommand{\goesc}{\xrightarrow{\mathrm{c}}}
\newcommand{\goesu}{\xrightarrow{\mathrm{u}}} 
\newcommand{\goesws}{\xrightarrow{\mathrm{w}^*}} 
\newcommand{\goesucc}{\xrightarrow{\mathrm{ucc}}} 
\DeclareSymbolFont{bbold}{U}{bbold}{m}{n}
\DeclareSymbolFontAlphabet{\mathbbold}{bbold}
\def\one{\mathbbold{1}}
\DeclareMathOperator{\Range}{Range}
\renewcommand{\le}{\leqslant}
\renewcommand{\ge}{\geqslant}
\begin{document}

\title[Net convergence structures]
{Net convergence structures\\ with applications to vector lattices}

\author{M. O'Brien}
\email{mjbeauli@ualberta.ca}

\author{V.G. Troitsky}
\email{troitsky@ualberta.ca}

\address{Department of Mathematical and Statistical Sciences,
  University of Alberta, Edmonton, AB, T6G\,2G1, Canada.}

\author{J.H. van der Walt}
\email{janharm.vanderwalt@up.ac.za}
\address{Department of Mathematics and Applied Mathematics, University of Pretoria,Corner of Lynnwood Road and Roper Street, Hatfield 0083, Pretoria, South
Africa}

\thanks{The second author was supported by an NSERC grant.}

\date{\today}

\begin{abstract}
  Convergence is a fundamental topic in analysis that is most commonly
  modelled using topology. However, there are many natural convergences
  that are not given by any topology; e.g., convergence almost
  everywhere of a sequence of measurable functions and order
  convergence of nets in vector lattices. The theory of convergence
  structures provides a framework for studying more general modes of
  convergence. It also has one particularly striking feature: it is
  formalized using the language of filters. This paper develops a
  general theory of convergence in terms of nets. We show that it is
  equivalent to the filter-based theory and present some translations
  between the two areas. In particular, we provide a characterization
  of pretopological convergence structures in terms of nets. We also
  use our results to unify certain topics in vector lattices with
  general convergence theory.
\end{abstract}

\maketitle

\section{Introduction}
Convergence is an important part of the toolkit for
anyone working in functional analysis, where many deep properties are
often expressed in terms of convergent nets and sequences. While
convergence is often associated with topology, it has long been known
that there are important convergences that are not topological: it was
shown in~\cite{Ordman:66} that convergence almost everywhere is not
given by a topology. Furthermore, there are several important non-topological
convergences in the theory of vector lattices, including order
convergence, uo-convergence, and relative uniform convergence.

The theory of convergence structures was developed in order to handle
non-topological convergences. For an overview of this subject area we
refer the reader to the monograph~\cite{Beattie:02}. Of particular
note, this theory uses the language of filters to describe
convergence; we will refer to this approach as the theory of
\emph{filter convergence structures}. In this paper, we develop a
theory of \emph{net convergence structures} that is equivalent to
filter convergence theory. As the name suggests, our approach will
focus on describing convergence using the language of nets. While
several authors have approached this idea, including
\cite[p.~73]{Kelley:55},
\cite{Katetov:67,Aarnes:72,Arstein:77,Pearson:88},
\cite[p.~168-170]{Schechter:97}, and~\cite{Ho:10,Aydin}, our paper has
several advantages. First and foremost, our theory is equivalent to
the theory of filter convergence structures despite the use of
nets. As a consequence, all results from the theory of filter
convergence structures remain applicable to net convergence
structures. Our definition also handles several set-theoretic
subtleties, like how the collection of all nets in a given set does
not form a set.

This paper can be broken into two main parts: the general theory of
net convergence structures, which is developed in
Sections~\ref{sec:nets}--\ref{sec:cc}, and applications to vector
lattices in Sections~\ref{sec:oconv}
and~\ref{sec:uconv-otop}. Section~\ref{sec:nets} covers preliminaries
and set-theoretic considerations, while the definition of net
convergence structures and their basic properties are presented in
Section~\ref{sec:ncs}. We provide a brief overview of filter
convergence structures in Section~\ref{sec:fcs}, and show they are
equivalent to our theory of net convergence in
Section~\ref{sec:equiv}. Section~\ref{sec:terms} is a dictionary
between several basic concepts from the filter theory and the language
of nets. A reader not interested in filter convergence structures may
skip Sections~\ref{sec:fcs}, \ref{sec:equiv}, and~\ref{sec:terms}: in
a way, the entire point of this paper is that the filter theory is not
required to study convergence structures. We characterize topological
and pretopological convergence structures in terms of nets in
Section~\ref{sec:mix}. In Section~\ref{sec:cc} we reinvestigate
continuous convergence from the perspective of nets. In the final two
sections we discuss applications of our theory to vector lattices:
Section~\ref{sec:oconv} focuses on order convergence from the point of
view of convergence structures, while we identify the Mackey and
topological modifications of order convergence in
Section~\ref{sec:uconv-otop}.

For results concerning topological vector
spaces, we refer the reader to \cite{Kelley:76} and Section~3.1
in~\cite{Aliprantis:06}.

\section{What exactly is a net?}
\label{sec:nets}

The definition of a net is not consistent throughout the literature:
some authors require the index set to be ordered, while others allow
pre-ordered index sets. Some authors may require index sets to be
infinite and without terminal nodes. We will take the most general
approach.

It is a common in the literature to quantify over all nets in a
given set. For example, the standard definition of an order closed set
in a vector lattice goes as follows: a set $C$ is said to be order
closed if for every net $(x_\alpha)$ in~$C$, if the net converges in
order to~$x$, it implies $x\in C$. This definition is ``illegal'' from
the point of view of ZFC: we quantify over all nets in~$C$, hence we
implicitly quantify over all directed sets, which is not
a set. One way to resolve this issue is to use NBG rather than ZFC and
consider classes of nets; this is the approach taken
in~\cite[p.~73]{Kelley:55}. We describe another way to circumvent this
issue while staying within ZFC.

Let $A$ be a set equipped with a relation ``$\le$''. A relation is a
\term{pre-order} if it is reflexive and
transitive. Note that $x\sim y$ if $x\le y$ and $y\le x$ then defines
an equivalence relation. A pre-order may be easily converted into an
order when necessary; this allows one to view any pre-ordered set as
an ordered set. One way to do this is to replace each equivalence
class with a single point to produce a ``quotient order''. Another
approach is to ``straighten up'' each equivalence class by using the
Well Ordering Principle to redefine the relation on each equivalence
class to make it into a totally ordered set.

A set $A$ is \term{directed} if it is pre-ordered
and every two elements have a common descendant; more formally
$$\forall \alpha,\beta\in A\quad\exists \gamma\in
A\mbox{ such that }\alpha\le \gamma\mbox{ and }\beta\le \gamma.$$
Throughout this section $X$ will denote an arbitrary set.  A function
from a directed set $A$ to $X$ is referred to as a \term{net} in~$X$
indexed by~$A$. In this case, instead of writing $x\colon A\to X$ we
write $(x_\alpha)_{\alpha\in A}$ or just $(x_\alpha)$ if the index set
is clear from the context. A \term{sequence} is simply a net indexed
by~$\mathbb N$.

Let $(x_\alpha)_{\alpha\in A}$ be a net in~$X$. We write
$\{x_\alpha\}_{\alpha\in A}$ for the set of all terms of the net:
$\{x_\alpha\mid\alpha\in A\}$, which is nothing but the range of $x$
viewed as a function. If $\alpha_0\in A$ is fixed and we put $A_0=\{\alpha\in
A\mid\alpha\ge\alpha_0\}$ then $A_0$ is again a directed set
under the pre-order induced from~$A$. The restriction of the function
$x$ to $A_0$ is called a \term{tail} of $(x_\alpha)$, and it is
denoted by $(x_\alpha)_{\alpha\ge\alpha_0}$. The range of this new net
will be denoted by $\{x_\alpha\}_{\alpha\ge\alpha_0}$ and called a
\term{tail set} of $(x_\alpha)_{\alpha\in A}$. By
$\langle x_\alpha\rangle_{\alpha\in A}$ we mean the set of all tail sets
of $(x_\alpha)_{\alpha\in A}$; that is,
\begin{displaymath}
  \langle x_\alpha\rangle_{\alpha\in A}
  =\bigl\{\{x_\alpha\}_{\alpha\ge\alpha_0}\mid \alpha_0\in A\bigr\}.
\end{displaymath}

Convergence is often thought of as a ``tail property'', meaning that
altering terms at the ``head'' of a convergent net should not affect
convergence. So, intuitively, convergence should be determined by tail
sets. This idea is embedded into the following two definitions.
Following \cite{Katetov:67}\footnote{also see~\cite{Aarnes:72}}, 
given two nets $(x_\alpha)_{\alpha\in A}$ and $(y_\beta)_{\beta\in B}$ we say
that $(y_\beta)_{\beta\in B}$ is a \term{quasi-subnet} of
$(x_\alpha)_{\alpha\in A}$ and write
\begin{math}
  (y_\beta)_{\beta\in B}\preceq(x_\alpha)_{\alpha\in A}
\end{math}
if for every $\alpha_0\in A$ there exists $\beta_0\in B$ such that
\begin{math}
  \{y_\beta\}_{\beta\ge\beta_0}\subseteq\{x_\alpha\}_{\alpha\ge\alpha_0};
\end{math}
this just means every tail of $(x_\alpha)_{\alpha\in A}$ contains a tail of
$(y_\beta)_{\beta\in B}$ as a \emph{subset}. 

If both
\begin{math}
  (y_\beta)_{\beta\in B}\preceq(x_\alpha)_{\alpha\in A}
\end{math}
and
\begin{math}
  (x_\alpha)_{\alpha\in A}\preceq(y_\beta)_{\beta\in B},
\end{math}
we say that $(x_\alpha)_{\alpha\in A}$ and $(y_\beta)_{\beta\in B}$
are \term{tail equivalent} and write 
\begin{math}
  (x_\alpha)_{\alpha\in A}\sim(y_\beta)_{\beta\in B}.
\end{math}
While this relation will be sufficient for our main results, we
frequently observe a stronger property.  $(x_\alpha)_{\alpha\in A}$
and $(y_\beta)_{\beta\in B}$ are said to be \term{stongly tail
  equivalent}, written
\begin{math}
(x_\alpha)_{\alpha\in A}\approx(y_\beta)_{\beta\in B},
\end{math}
if
\begin{math}
\langle x_\alpha\rangle_{\alpha\in A}=\langle y_\beta\rangle_{\beta\in B}
\end{math}.

It is easy to see that ``$\preceq$'' is reflexive and transitive,
while ``$\sim$'' and ``$\approx$'' are reflexive, transitive, and
symmetric. While it is tempting to say ``$\preceq$'' is a pre-order
and ``$\sim$'' and ``$\approx$'' are equivalence relations on the nets
on~$X$, the latter collection is not a set. The following technical
trick will allow us to work around this issue.

Our idea is to restrict the collection of index sets to a {\it set}
that is large enough to represent any possible tail behaviour
in~$X$. We write $\mathcal P(X)$ for the power set of~$X$ and define
$V_1(X)=X$ and $V_{n+1}(X)=V_n(X)\cup\mathcal P\bigl(V_n(X)\bigr)$
for $n\in\mathbb N$; this is formally known as the {\it
  superstructure} over~$X$. A net $(x_\alpha)_{\alpha\in A}$ in $X$ is
called \term{admissible} if $A\in\bigcup_{n=1}^\infty V_n(X)$.

%
%

Unlike all nets in~$X$, the admissible nets clearly form a set; we
denote it by $\mathfrak N(X)$. It is easy to see that the restriction
of ``$\preceq$'' to $\mathfrak N(X)$ is a pre-order relation, and the
restrictions of ``$\sim$'' and ``$\approx$'' are equivalence
relations.

We will delay the proof of this next theorem
until Section~\ref{sec:equiv} where it will be deduced as an
easy corollary.
\begin{theorem}\label{admis}
  Every net in $X$ is strongly tail equivalent to an admissible net.
\end{theorem}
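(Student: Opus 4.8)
The plan is to build an explicit admissible net with exactly the same tail sets as the given net $(x_\alpha)_{\alpha\in A}$, using the collection of tail sets itself (suitably indexed) as the new index set. Write $\mathcal{T}=\langle x_\alpha\rangle_{\alpha\in A}$ for the set of all tail sets; since each tail set is a subset of $X$, we have $\mathcal{T}\subseteq\mathcal{P}(X)$, so $\mathcal{T}$ is genuinely a set of bounded rank. The first observation is that $\mathcal{T}$ is directed under reverse inclusion: given two tail sets $\{x_\alpha\}_{\alpha\ge\alpha_1}$ and $\{x_\alpha\}_{\alpha\ge\alpha_2}$, directedness of $A$ supplies $\gamma\ge\alpha_1,\alpha_2$, and transitivity of the pre-order yields $\{x_\alpha\}_{\alpha\ge\gamma}\subseteq\{x_\alpha\}_{\alpha\ge\alpha_1}\cap\{x_\alpha\}_{\alpha\ge\alpha_2}$.

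Next I would take as index set $B=\{(T,x)\mid T\in\mathcal{T},\ x\in T\}$, ordered by $(T,x)\le(T',x')$ iff $T'\subseteq T$, and define the net by $y_{(T,x)}=x$. Directedness of $B$ follows from that of $\mathcal{T}$: for $(T_1,x_1),(T_2,x_2)$ choose $T_3\in\mathcal{T}$ with $T_3\subseteq T_1\cap T_2$ and any $x_3\in T_3$, so $(T_3,x_3)$ dominates both. The key computation is that the tail set of $(y_\beta)$ at $(T_0,x_0)$ equals $\bigcup\{T\in\mathcal{T}\mid T\subseteq T_0\}$; this union is contained in $T_0$ because every term is, and contains $T_0$ because $T_0$ itself appears, so it is exactly $T_0$. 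Since each $T_0\in\mathcal{T}$ is nonempty (it contains $x_{\alpha_0}$ by reflexivity, so the index value $(T_0,x_0)$ exists) and conversely every tail set of $(y_\beta)$ arises this way, I conclude $\langle y_\beta\rangle_{\beta\in B}=\mathcal{T}=\langle x_\alpha\rangle_{\alpha\in A}$, that is, the two nets are strongly tail equivalent.

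Finally I would verify that $B$ is admissible by locating it in the superstructure through a Kuratowski-pair chase: an element $x$ lies in $V_1(X)=X$, a tail set $T$ lies in $\mathcal{P}(X)\subseteq V_2(X)$, so each pair $(T,x)=\{\{T\},\{T,x\}\}$ lies in $V_4(X)$, whence $B\in V_5(X)\subseteq\bigcup_{n}V_n(X)$, independently of $X$. The degenerate case $A=\emptyset$, where $\mathcal{T}=\emptyset$ and $B=\emptyset$, is covered by the same construction since $\emptyset\in\mathcal{P}(X)$ is admissible. The only step demanding real care is the tail-set computation, because strong tail equivalence requires the two families $\langle\cdot\rangle$ to coincide \emph{exactly}, not merely cofinally; it is precisely the identity $\bigcup\{T\in\mathcal{T}\mid T\subseteq T_0\}=T_0$, together with the nonemptiness of each $T_0$, that delivers this exact coincidence.
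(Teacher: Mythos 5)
Your proof is correct and follows essentially the same route as the paper: the paper deduces Theorem~\ref{admis} from Proposition~\ref{tfb-net}, whose construction (index set $\{(A,x)\mid x\in A\in\mathcal B\}$ ordered by reverse inclusion in the first coordinate, with value $x$) is exactly your net $(y_{(T,x)})$, applied to the filter base of tail sets. You merely inline that construction and spell out the details the paper labels ``easy to see'' (the tail-set identity, the rank computation in the superstructure, and the degenerate case), which is fine.
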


Since any reasonable definition of convergence for nets should be
stable under tail equivalence, this theorem effectively says we may,
without loss of generality, restrict our attention to admissible
nets. In view of this, we will often identify an arbitrary net
$(x_\alpha)_{\alpha\in A}$ in $X$ with the equivalence class in
$\mathfrak N(X)/\!\!\sim$ consisting of the admissible nets that are
tail equivalent to $(x_\alpha)_{\alpha\in A}$.

\medskip

There are several non-equivalent definitions of the term {\it subnet}
in the literature and, for historical reasons, we would like to
mention the two that are most common. Keep in mind that, in the end,
the exact definition of subnet will be of little concern to us because
our theory will be based on quasi-subnets. Let
$(x_\alpha)_{\alpha\in A}$ be a net, $B$ be a directed set, and
$\varphi\colon B\to A$ such that $\Range\varphi$ is \emph{co-final}
in~$A$, meaning for every $\alpha_0\in A$ there is $\beta_0\in B$ with
$\varphi(\beta)\ge\alpha_0$ whenever $\beta\ge\beta_0$. The
composition $x\circ\varphi\colon B\to X$ is a net in $X$ indexed
by~$B$ and is called a \term{subnet} of $(x_\alpha)_{\alpha\in A}$
according to Kelley in~\cite{Kelley:55}. Willard in~\cite{Willard:70}
requires the additional assumption that $\varphi$ be monotone. Clearly
every Willard-subnet is a Kelley-subnet. We refer the reader to
\cite[pp.~162-165]{Schechter:97} for a thorough discussion about
several alternative definitions of a subnet and relationships between
these definitions. The following result is essentially contained in
Corollary~7.19 in~\cite{Schechter:97}.

\begin{proposition}\label{subnet-q-subnet}
  Every subnet is a quasi-subnet. Every quasi-subnet is
  tail-equivalent to a subnet.
\end{proposition}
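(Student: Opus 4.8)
The plan is to prove the two assertions separately, both by direct unwinding of the definitions.

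For the first assertion, I would take a subnet $x\circ\varphi\colon B\to X$ of $(x_\alpha)_{\alpha\in A}$, where $\varphi\colon B\to A$ has co-final range (the Kelley definition, which subsumes Willard's). To show it is a quasi-subnet, I must verify that for every $\alpha_0\in A$ there is $\beta_0\in B$ with $\{x_{\varphi(\beta)}\}_{\beta\ge\beta_0}\subseteq\{x_\alpha\}_{\alpha\ge\alpha_0}$. Given $\alpha_0$, co-finality of $\Range\varphi$ supplies a $\beta_0$ such that $\varphi(\beta)\ge\alpha_0$ for all $\beta\ge\beta_0$. Then for any such $\beta$ the term $x_{\varphi(\beta)}$ lies in the tail set $\{x_\alpha\}_{\alpha\ge\alpha_0}$, which is exactly the required containment. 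This direction is short and I expect no obstacle.

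For the second assertion, let $(y_\beta)_{\beta\in B}\preceq(x_\alpha)_{\alpha\in A}$ be a quasi-subnet. I would construct a subnet tail-equivalent to $(y_\beta)$ by building a new directed index set and a map into $A$. The natural candidate is the set of pairs $D=\{(\beta,\alpha)\in B\times A\mid y_\beta=x_\alpha\}$, ordered coordinatewise, with the projection $\varphi\colon D\to A$, $(\beta,\alpha)\mapsto\alpha$, giving the subnet $\bigl(x_\alpha\bigr)_{(\beta,\alpha)\in D}$. One checks that this is well-defined: every $y_\beta$ is a term $x_\alpha$ for some $\alpha$, since the quasi-subnet condition forces $\{y_\beta\}_{\beta\in B}\subseteq\{x_\alpha\}_{\alpha\in A}$. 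The work is then to show that $D$ is directed, that $\Range\varphi$ is co-final in $A$, and that the resulting subnet is tail-equivalent to $(y_\beta)$; these verifications use the quasi-subnet inequality in both the forward and the backward estimates.

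The main obstacle is directedness of $D$ together with co-finality of $\varphi$, since coordinatewise domination of two pairs requires a common upper bound \emph{in $A$} that is simultaneously realized as a value $x_\alpha$ equal to some $y_\beta$ with $\beta$ above the given second coordinates. Here I would lean on the quasi-subnet condition, which guarantees that tails of $(y_\beta)$ sit inside arbitrarily deep tails of $(x_\alpha)$, to locate such common pairs; directedness of $A$ and of $B$ then combine to produce the upper bound. Once $D$ is directed and $\varphi$ co-final, the subnet property holds by construction, and tail-equivalence with $(y_\beta)$ follows from comparing tail sets indexed by $D$ with those indexed by $B$. Since the proposition is attributed to Corollary~7.19 in~\cite{Schechter:97}, I would cite that reference for the delicate directedness bookkeeping rather than reproduce every case.
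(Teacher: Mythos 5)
Your construction is the same as the paper's: the paper takes
$C=\bigl\{(\alpha,\beta)\in A\times B\mid x_\alpha=y_\beta\bigr\}$ with the
product order and the projection $\varphi(\alpha,\beta)=\alpha$, which is your
$D$ with the coordinates swapped, and the first assertion is handled there by
exactly your unwinding argument. However, one intermediate claim in your
proposal is false: the quasi-subnet condition does \emph{not} force
$\{y_\beta\}_{\beta\in B}\subseteq\{x_\alpha\}_{\alpha\in A}$. It only says
that every tail of $(x_\alpha)$ contains a \emph{tail} of $(y_\beta)$, so
early terms of $(y_\beta)$ may fail to be terms of $(x_\alpha)$ at all; the
paper's own Example~\ref{ex:altern}(i) --- the sequences $(1,0,0,\dots)$ and
$(-1,0,0,\dots)$, each a quasi-subnet of the other --- is a counterexample,
since $1$ is not a term of the second sequence.

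Fortunately, nothing in your plan actually depends on that inclusion. The net
$(x_\alpha)_{(\beta,\alpha)\in D}$ is well defined no matter what; what needs
checking is only that $D$ is non-empty and directed, that $\Range\varphi$ is
co-final, and that the result is tail equivalent to $(y_\beta)$ --- and all of
these use only the tail form of the condition. For directedness, given
$(\beta_1,\alpha_1),(\beta_2,\alpha_2)\in D$, choose
$\alpha_3\ge\alpha_1,\alpha_2$ in~$A$ and apply the quasi-subnet condition to
$\alpha_3$ to get $\beta_0$ with
$\{y_\beta\}_{\beta\ge\beta_0}\subseteq\{x_\alpha\}_{\alpha\ge\alpha_3}$; any
$\beta_4\ge\beta_1,\beta_2,\beta_0$ then satisfies $y_{\beta_4}=x_{\alpha_4}$
for some $\alpha_4\ge\alpha_3$, and $(\beta_4,\alpha_4)$ is the required upper
bound --- precisely the bookkeeping you sketched, which is sound (the same
move gives co-finality and both directions of tail equivalence, and
non-emptiness of $D$). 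So strike the range-inclusion claim, and with it the
vacuous ``well-definedness'' worry; the corrected argument coincides with the
paper's proof.
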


\begin{proof}
  It is easy to see that every subnet (either Willard or Kelley) is a
  quasi-subnet. Let $(y_\beta)_{\beta\in B}$ be a quasi-subnet of
  $(x_\alpha)_{\alpha\in A}$. Put
  \begin{math}
    C=\bigl\{(\alpha,\beta)\in A\times B\mid x_\alpha=y_\beta\bigr\}.
  \end{math}
  For $(\alpha,\beta)\in C$, put $\varphi(\alpha,\beta)=\alpha$.  It
  is straightforward that $C$ is directed under the product order
  induced from $A\times B$, and $x\circ\varphi$ is a subnet of
  $(x_\alpha)_{\alpha\in A}$ (in the sense of both Willard and
  Kelley), which is tail equivalent to $(y_\beta)_{\beta\in B}$.
\end{proof}

\begin{example}\label{ex:altern}
  \begin{enumerate}
  \item The sequences $(1,0,0,0,\dots)$ and $(-1,0,0,0,\dots)$ are
    tail equivalent, but not stronlgy tail equivalent. In particular,
    each is a quasi-subnet of the other.  Notice that neither is a
    subnet of the other in the sense of either Kelley or Willard.
  \item 
  Let $(x_n)_{n\in\mathbb N}$ and $(y_n)_{n\in\mathbb N}$ be sequences
  in $\mathbb N$ given by
  \begin{align*}
  &(x_n)_{n\in\mathbb N}=(1,2,3,4,5,6,7,8,\dots)\\
  \text{and} \quad&(y_n)_{n\in\mathbb N}=(2,1,4,3,6,5,8,7,\dots).
  \end{align*}
  It is easy to see that these two sequences are tail equivalent.
  Note that $(x_n)_{n\in\mathbb N}$ is increasing while
  $(y_n)_{n\in\mathbb N}$ is not. So the property of being monotone is
  not preserved under tail equivalence. Also note that the two
  sequences are not strongly tail equivalent. However, each is a
  Kelley-subnet of the other via $\varphi(2n-1)=2n$ and
  $\varphi(2n)=2n-1$ ($n\in\mathbb N)$.

\item The sequences $(-1,0, 1,2,3,\dots)$ and $(0,-1,0,1,2,3,\dots)$
  are strongly tail equivalent, yet one is increasing while the other
  is not: so monotonicity is not preserved under strong tail
  equivalence.
  \end{enumerate}
\end{example}

A net $(x_\alpha)$ in a partially ordered set is said to be increasing
if $x_\alpha\le x_\beta$ in $X$ whenever $\alpha\le\beta$ in~$A$. One
defines decreasing nets similarly.  The preceding example shows that
tail equivalence may ruin important properties of nets, like
monotonicity. This has the potential to cause serious problems for
applications to vector lattices. There is, however, a fix for this
issue.

\begin{proposition}\label{inc-admis}
  Let $X$ be a partially ordered set, and let $(x_\alpha)_{\alpha\in\Lambda}$
  be an increasing net in~$X$. Then there exists an admissible
  increasing net in $X$ which is strongly tail equivalent to
  $(x_\alpha)_{\alpha\in\Lambda}$. A similar result holds for decreasing nets.
\end{proposition}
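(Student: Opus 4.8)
The plan is to index the required net by the tail sets of $(x_\alpha)_{\alpha\in\Lambda}$ themselves. Write $\mathcal T=\langle x_\alpha\rangle_{\alpha\in\Lambda}$ for the collection of all tail sets and order it by \emph{reverse} inclusion, writing $T\sqsubseteq T'$ to mean $T\supseteq T'$. The first step is to record the one special feature of increasing nets that makes everything work: every tail set $T=\{x_\alpha\}_{\alpha\ge\alpha_0}$ has a least element. Indeed $x_{\alpha_0}\in T$, and since the net is increasing we have $x_{\alpha_0}\le x_\alpha$ for all $\alpha\ge\alpha_0$; as $X$ is a poset this least element is unique, so $\min T$ is a well-defined element of $X$ that does not depend on which index $\alpha_0$ represents $T$. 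In particular $\min\{x_\alpha\}_{\alpha\ge\alpha_0}=x_{\alpha_0}$.

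Next I would check that $(\mathcal T,\sqsubseteq)$ is an admissible directed index set and that $y_T:=\min T$ defines an increasing net. Directedness transfers from $\Lambda$: given tail sets $\{x_\alpha\}_{\alpha\ge\alpha_0}$ and $\{x_\alpha\}_{\alpha\ge\alpha_1}$, choose $\gamma\ge\alpha_0,\alpha_1$; then $\{x_\alpha\}_{\alpha\ge\gamma}$ is a subset of both, hence is a common descendant in the order $\sqsubseteq$. Admissibility is a superstructure count: each tail set lies in $\mathcal P(X)\subseteq V_2(X)$, so $\mathcal T\subseteq V_2(X)$ and therefore $\mathcal T\in\mathcal P\bigl(V_2(X)\bigr)\subseteq V_3(X)\subseteq\bigcup_{n=1}^\infty V_n(X)$. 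Monotonicity of $(y_T)$ is immediate from the first paragraph: if $T\sqsubseteq T'$, that is $T'\subseteq T$, then $\min T'\in T'\subseteq T$, whence $\min T\le\min T'$.

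The crux is to prove $(y_T)_{T\in\mathcal T}\approx(x_\alpha)_{\alpha\in\Lambda}$, i.e. $\langle y_T\rangle_{T\in\mathcal T}=\mathcal T$. The tail set of $(y_T)$ past an index $T_0$ is $\{y_T\}_{T_0\sqsubseteq T}=\{\min T\mid T\in\mathcal T,\ T\subseteq T_0\}$, and I would show this equals $T_0$. One inclusion is trivial, since $\min T\in T\subseteq T_0$ whenever $T\subseteq T_0$. For the reverse, take $x_\beta\in T_0=\{x_\alpha\}_{\alpha\ge\alpha_0}$ with $\beta\ge\alpha_0$; then the further tail $\{x_\alpha\}_{\alpha\ge\beta}$ is a member of $\mathcal T$ contained in $T_0$, and its minimum is exactly $x_\beta$, so $x_\beta$ is attained. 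Hence the tail set of $(y_T)$ past $T_0$ is precisely $T_0$, which shows the tail sets of $(y_T)$ are exactly the members of $\mathcal T$, giving $\langle y_T\rangle_{T\in\mathcal T}=\mathcal T$ as required.

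I expect this last identity to be the main obstacle, and it is also where the naive attempt fails: one is tempted to use the range $\{x_\alpha\}_{\alpha\in\Lambda}$ ordered by $X$ as the index set, but a value occurring only near the head of the net can lie above $x_{\alpha_0}$ yet not belong to $\{x_\alpha\}_{\alpha\ge\alpha_0}$, creating tail sets that are not tail sets of the original net and destroying strong tail equivalence. Indexing by tail sets and taking minima is precisely what avoids this. Finally, the decreasing case follows by applying the result to $X$ equipped with the reversed order (equivalently, by replacing $\min$ with $\max$ throughout).
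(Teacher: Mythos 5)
Your proof is correct and follows essentially the same route as the paper's: both index the new net by the collection of tail sets $\langle x_\alpha\rangle_{\alpha\in\Lambda}$ ordered by reverse inclusion, define each term as the (unique) least element of the corresponding tail set, note admissibility via $\mathcal P\bigl(\mathcal P(X)\bigr)$, and establish strong tail equivalence by showing the tail of the new net past $T_0$ is exactly $T_0$. Your added checks---that $\min T$ is independent of the representing index, and the cautionary remark on why indexing by the range fails---are sound refinements but do not change the argument.
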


\begin{proof}
  For each $\alpha\in\Lambda$ let
  $T_\alpha=\{x_\beta\}_{\beta\ge\alpha}$ be the $\alpha$-th tail set of
  the original net. It is clear that $x_\alpha$ is the (unique) least
  element of~$T_\alpha$. Let
  \begin{displaymath}
    \mathcal B=\langle x_\alpha\rangle_{\alpha\in\Lambda}
    =\{T_\alpha\mid\alpha\in\Lambda\}
  \end{displaymath}
  and order it by reverse
  inclusion. This makes
  $\mathcal B$ into a partially ordered directed set. Define a net
  $(y_A)_{A\in\mathcal B}$ by letting $y_A$ be the least element
  of~$A$. It can be easily verified that this net is increasing, and it is
  admissible because $\mathcal B\in\mathcal P\bigl(\mathcal P(X)\bigr)$.

  Let $\alpha_0\in\Lambda$. If $A\ge T_{\alpha_0}$ in $\mathcal B$
  then $y_A\in A\subseteq T_{\alpha_0}$, which implies
  $\{y_A\}_{A\ge T_{\alpha_0}}\subseteq T_{\alpha_0}$. Furthermore,
  for every $\alpha\ge\alpha_0$ in~$\Lambda$ we have
  $T_\alpha\ge T_{\alpha_0}$ in $\mathcal B$ and
  $x_\alpha=y_{T_\alpha}$; hence
  $T_\alpha\subseteq\{y_A\}_{A\ge T_{\alpha_0}}$. This yields
  $\{y_A\}_{A\ge T_{\alpha_0}}=T_{\alpha_0}$, and thus the two nets have
  the same tail sets. Therefore, they are strongly tail equivalent.
  
  The case of decreasing nets may be handled in a similar fashion.
\end{proof}

This proposition motivates the following definition. Let $X$ be a
partially ordered set. An equivalence class in
$\mathfrak N(X)/\!\!\sim$ is said to be \term{increasing} if it contains an
increasing net and \term{decreasing} if it contains a decreasing net.

\begin{remark}\label{same-ind}
  When dealing with a pair of nets, the following trick allows
  one to assume the two nets have the same index set. Let
  $(x_\alpha)_{\alpha\in A}$ and $(y_\beta)_{\beta\in B}$ be two
  nets. The Cartesian product $A\times B$ can be made into a directed
  set by defining $(\alpha_1,\beta_1)\le(\alpha_2,\beta_2)$ whenever
  $\alpha_1\le\alpha_2$ in $A$ and $\beta_1\le\beta_2$ in~$B$. Now define
  two new nets indexed by $A\times B$ via
  \begin{displaymath}
    x'_{(\alpha,\beta)}=x_\alpha\quad\mbox{and}\quad
    y'_{(\alpha,\beta)}=y_\beta,\quad\mbox{where}\quad
    (\alpha,\beta)\in A\times B.
  \end{displaymath}
  It is easy to see that $(x_\alpha)_{\alpha\in A}$ is strongy tail
  equivalent to $(x'_{(\alpha,\beta)})_{(\alpha,\beta)\in A\times B}$
  and  $(y_\beta)_{\beta\in B}$ is strongly tail
  equivalent to $(y'_{(\alpha,\beta)})_{(\alpha,\beta)\in A\times B}$.
\end{remark}

\section{Net convergence structures}
\label{sec:ncs}

We are now in a position to introduce the main object of this
paper. Fix a set~$X$. By a \term{net convergence structure} on $X$ we
mean a function $\eta\colon X\to\mathcal P\bigl((\mathfrak N(X)\bigr)$
satisfying certain axioms that will be given below. Instead of
$(x_\alpha)_{\alpha\in\Lambda}\in\eta(x)$, we write
$(x_\alpha)_{\alpha\in\Lambda}\goeseta x$ and say that
$(x_\alpha)_{\alpha\in\Lambda}$ $\eta$-converges to~$x$. For
convenience, and when there is no risk of confusion, we may
de-emphasize $\eta$ and simply write $x_\alpha\to x$ and say
$(x_\alpha)$ converges to~$x$. In all these cases, $x$ is called a
\term{limit} of $(x_\alpha)_{\alpha\in\Lambda}$. Here are the axioms:
\begin{enumerate}
\item[(N1)] Constant nets converge: if $x_\alpha=x$ for every
  $\alpha$ then $x_\alpha\to x$;
\item[(N2)] If a net converges to $x$ then every
  quasi-subnet of it converges to~$x$;
\item[(N3)] Suppose that $(x_\alpha)_{\alpha\in\Lambda}\to x$
  and $(y_\alpha)_{\alpha\in\Lambda}\to x$. Let
  $(z_\alpha)_{\alpha\in\Lambda}$ be a net in $X$ such that
  $z_\alpha\in\{x_\alpha,y_\alpha\}$ for every~$\alpha$. Then
  $z_\alpha\to x$.
\end{enumerate}
In this case, we say that $X$ together with this net convergence
structure is a \term{net convergence space}. If we wish to highlight
both the set and the convergence on it, we will usually write
$(X,\xrightarrow{\eta})$ or $(X,\to)$.

If $x_\alpha\to x$ and $(x_\alpha)\sim(y_\beta)$ for some admissible
nets $(x_\alpha)$ and $(y_\beta)$, then $y_\beta\to x$; this follows
immediately from the fact that $(y_\beta)$ is a quasi-subnet
of~$(x_\alpha)$. In particular, an admissible net converges to $x$ iff
some (and therefore every) tail of it converges to~$x$. This also means
that one may think of convergence of equivalence classes in
$\mathfrak N(X)/\!\!\sim$ rather than just for individual nets.

We now extend convergence to nets that need not be admissible:
for an arbitrary net $(x_\alpha)$ in~$X$ we say $x_\alpha\to x$
if $(x_\alpha)$ is tail equivalent to an admissible net that converges
to~$x$. However, every time we quantify over ``all nets in $X$'', we
implicitly mean ``over $\mathfrak N(X)$''.

Given two convergence structures $\eta_1$ and $\eta_2$ on~$X$, we say
that $\eta_1$ is \term{stronger} than $\eta_2$ if
$x_\alpha\xrightarrow{\eta_1}x$ implies
$x_\alpha\xrightarrow{\eta_2}x$ for every (admissible) net
$(x_\alpha)$ in~$X$; in this case we also say that $\eta_2$ is
\term{weaker} than~$\eta_1$.

Every convergence induced by a topology satisfies the
axioms above; such convergences will be referred to as
\term{topological}.  We may now extend many properties of
convergence from topology to the setting of general net convergence spaces.

A net convergence space is said to be \term{Hausdorff} if every
net has at most one limit. If the structure is induced
by a topology, this just means the topology is Hausdorff.

It follows from Proposition~\ref{subnet-q-subnet} that
the axiom (N2) is equivalent to the following two alternative
conditions:
\begin{enumerate}
\item [(N2a)] If a net converges to $x$, then so does each of its subnets, and
\item [(N2b)] If $(x_\alpha)$ converges to $x$ and $(y_\beta)$ is tail
equivalent to $(x_\alpha)$ then $(y_\beta)$ converges to $x$.
\end{enumerate}

\begin{example}\label{N1N2notN3}
  The following example shows that (N1) and (N2) do not imply (N3),
  even in the Hausdorff case. For a net $(x_\alpha)$ in~$\mathbb R$,
  we define $x_\alpha\to x$ if $(x_\alpha)$ converges to $x$ in
  the usual convergence of $\mathbb R$ and, in addition, $(x_\alpha)$
  either has a tail consisting of only rational numbers or only of
  irrational numbers. It is easy to see that this convergence
  satisfies (N1) and (N2) and is Hausdorff, yet it fails (N3). 
\end{example}

\begin{example}
There are important convergences that fail to form net convergence
structures. For example, convergence along an ultrafilter
fails (N2).
\end{example}

Let $X$ be a net convergence space and $A\subseteq X$.  We denote the
set of limits of all nets in $A$ by~$\overline{A}$ and call it the
\term{closure} or the \term{adherence} of~$A$. If the limit of every
convergent net in $A$ belongs to~$A$, we say that $A$ is \term{closed}
and write $\overline{A}=A$. A word of warning: the closure of a set
need not be closed! We say that $A$ is \term{dense} if
$\overline{A}=X$. For $x\in A\subseteq X$ we say that $A$ is a
\term{neighborhood} of $x$ if for every net in $X$ that converges
to~$x$, a tail of the net is contained in~$A$.  $A$ is said to be
\term{open} if it is a neighborhood of each of its points. A function
$f\colon X\to Y$ between two net convergence spaces is said to be
\term{continuous at} $x$ if $x_\alpha\to x$ in $X$ implies
$f(x_\alpha)\to f(x)$ in $Y$ for every admissible net $(x_\alpha)$
in~$X$. Note that the net $\bigl(f(x_\alpha)\bigr)$ need not be
admissible in~$Y$; it suffices that it is tail equivalent to an
admissible net in $Y$ that converges to~$f(x)$.
We say that $f$ is
\term{continuous} if it is continuous at every $x\in X$.

Let $\mathcal A$ be a family of functions from a set $X$ to a
collection of net convergence spaces. We now consider the net
convergence structure on $X$ induced by $\mathcal A$: put
$x_\alpha\to x$ whenever $f(x_\alpha)\to f(x)$ for every
$f\in\mathcal A$. It can be easily verified that this is the weakest
net convergence structure that makes every function in $\mathcal A $
continuous.

Let $X$ be a net convergence space and $Y\subseteq X$. There is a
natural way to define a net convergence structure on~$Y$: for a net
$(x_\alpha)$ in $Y$ and a point $x$ in~$Y$ we put $x_\alpha\to x$ in
$Y$ whenever $x_\alpha\to x$ in the original convergence on~$X$. We
call $Y$ a \term{subspace} of~$X$. Equivalently, this is the
convergence structure on $Y$ induced by the inclusion map of $Y$
into~$X$; i.e., it is the weakest net convergence structure on $Y$
that makes the inclusion into $X$ continuous.

$X$ is said to be \term{compact} if every net in $X$ has a convergent
quasi-subnet. Then we say that a subset $A$ of $X$ is compact if it is
compact with respect to the induced convergence structure;
equivalently, if every net in $A$ has a quasi-subnet that converges in
$X$ to an element of~$A$. Several features of compact subsets from
topology continue to hold in the setting of net convergence spaces.
For example, every compact subspace of a Hausdorff space is closed. It
is also easy to see that every real- or complex-valued continuous
function is bounded on every compact subset of~$X$. A net convergence
space is \term{locally compact} if every convergent net has a tail
contained in a compact set.

Let $(X_\gamma)_{\gamma\in\Gamma}$ be a family of net convergence
spaces and consider the Cartesian product
$X=\prod_{\gamma\in\Gamma}X_\gamma$. For every $\gamma\in\Gamma$ let
$P_\gamma\colon X\to X_\gamma$ be the canonical coordinate map. The
\term{product net convergence structure} on $X$ is given by
$x_\alpha\to x$ in $X$ whenever $P_\gamma x_\alpha\to P_\gamma x$ in
$X_\gamma$ for every~$\gamma$. Equivalently, this is the net
convergence structure on $X$ induced by the family of all coordinate
maps $\{P_\gamma\}_{\gamma\in\Gamma}$; we view this as the
``point-wise'' or ``coordinate-wise'' convergence.

All the concepts introduced in the preceding paragraphs agree with the
appropriate topological concepts when the convergence is
topological. We list several of these properties in the following
proposition, all of which have well-known analogues for filter
convergence structures. In Section~\ref{sec:equiv} we will prove that
our concept of a net convergence structure is equivalent to the
concept of a filter convergence structure; this will make
Proposition~\ref{bas} redundant. Nevertheless, we provide a proof of
the proposition in order to illustrate the simplicity of net
convergence techniques.

\begin{proposition}\label{bas}
  The following are valid in every net convergence structure:
  \begin{enumerate}
  \item\label{bas-cont-comp} The composition of two continuous
    functions is continuous;
  \item\label{bas-cl-subset} Every set is contained in its closure;
  \item\label{bas-compl} A set is open iff its complement is closed;
  \item\label{bas-cl-inters} The intersection of any collection of
    closed sets is closed; 
  \item\label{bas-cl-union} The union of any finite collection of
    closed sets is closed;
  \item\label{bas-op-union} The union of any collection of open sets
    is open;
  \item\label{bas-op-inters} The intersection of any finite collection of
    open sets is open.
  \end{enumerate}
\end{proposition}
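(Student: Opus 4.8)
The plan is to establish the seven assertions one at a time, leaning throughout on a single recurring device: whenever a net converging to a point $x$ fails to have a tail inside a set $A$, the indices $\alpha$ with $x_\alpha\notin A$ are \emph{cofinal}, and restricting the net to them is a monotone, cofinal reindexing, hence a subnet and, by Proposition~\ref{subnet-q-subnet}, a quasi-subnet. By (N2) this quasi-subnet still converges to $x$. Dually, I would deduce the two open-set statements, \ref{bas-op-union} and~\ref{bas-op-inters}, from the closed-set statements through item~\ref{bas-compl} together with the De Morgan laws, so that the genuine work is confined to \ref{bas-cont-comp}--\ref{bas-cl-union}. I would note in passing that axiom (N3) is never invoked; these properties already follow from (N1) and (N2).

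The three ``soft'' items come first. For \ref{bas-cont-comp}, if $x_\alpha\to x$ then continuity of $f$ gives $f(x_\alpha)\to f(x)$ and continuity of $g$ then gives $g\bigl(f(x_\alpha)\bigr)\to g\bigl(f(x)\bigr)$; the only subtlety is that $\bigl(f(x_\alpha)\bigr)$ need not be admissible, but it is tail equivalent to an admissible convergent net, and since a quasi-subnet relation is preserved under postcomposition with $g$, condition (N2b) closes the gap. For \ref{bas-cl-subset}, given $x\in A$ the constant net at $x$ lies in $A$ and converges to $x$ by (N1), so $x\in\overline A$. For \ref{bas-cl-inters}, a net lying in $\bigcap_i A_i$ lies in every $A_i$, so any limit lies in each $\overline{A_i}=A_i$ and hence in the intersection.

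The heart of the proposition is the cofinality argument, which powers items~\ref{bas-compl} and~\ref{bas-cl-union}. For \ref{bas-compl}: if $A$ is open and $x\in\overline{A^c}$, pick a net in $A^c$ converging to $x$; were $x\in A$, openness would force a tail inside $A$, contradicting that the net avoids $A$, so $x\in A^c$ and $A^c$ is closed. Conversely, if $A^c$ is closed and $x\in A$, then any net converging to $x$ must eventually lie in $A$, for otherwise the cofinally many terms outside $A$ furnish a quasi-subnet in $A^c$ converging to $x$, forcing $x\in\overline{A^c}=A^c$, a contradiction; hence $A$ is a neighborhood of each of its points. For \ref{bas-cl-union} it suffices to treat two closed sets $A,B$ and a net $(x_\alpha)$ in $A\cup B$ with $x_\alpha\to x$: by directedness at least one of the index sets $\{\alpha\mid x_\alpha\in A\}$ and $\{\alpha\mid x_\alpha\in B\}$ is cofinal (if neither were, a common upper bound of the two witnessing indices would yield a term outside $A\cup B$), and the corresponding quasi-subnet forces $x$ into $\overline A=A$ or $\overline B=B$.

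The main obstacle throughout is precisely the bookkeeping that a restriction to a cofinal index set is a bona fide quasi-subnet eligible for (N2), and, in \ref{bas-cl-union}, the pigeonhole step that one of the two index sets must be cofinal. Once these are pinned down via Proposition~\ref{subnet-q-subnet} and directedness, everything else is routine, and items~\ref{bas-op-union} and~\ref{bas-op-inters} fall out by complementation: a union of open sets is the complement of an intersection of closed sets, hence open, and a finite intersection of open sets is the complement of a finite union of closed sets, hence open.
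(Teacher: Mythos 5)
Your proof is correct, and on items (i)--(iv) and (vi) it is essentially the paper's own argument: the forward half of (iii) is the same tail observation, the converse of (iii) is the same cofinal-restriction trick (the paper also restricts to $\Lambda_0=\{\alpha\mid x_\alpha\in A^C\}$, notes it is directed, and invokes (N2)), and (vi) is obtained from (iii) and (iv) by complementation exactly as in the paper. The one genuine divergence is the direction of the duality between (v) and (vii). The paper proves (vii) \emph{directly} --- if a tail $\{x_\alpha\}_{\alpha\ge\alpha_1}$ lies in $A_1$ and a tail $\{x_\alpha\}_{\alpha\ge\alpha_2}$ lies in $A_2$, any $\alpha_0\ge\alpha_1,\alpha_2$ gives a tail in $A_1\cap A_2$ --- and then deduces (v) by complementation; you instead prove (v) directly via a pigeonhole-cofinality argument and deduce (vii) by complementation. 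Your version is sound: if neither $\{\alpha\mid x_\alpha\in A\}$ nor $\{\alpha\mid x_\alpha\in B\}$ were cofinal, a common upper bound of the two witnessing indices would produce a term outside $A\cup B$, and the restriction to the cofinal index set is directed and is a quasi-subnet by Proposition~\ref{subnet-q-subnet}, so (N2) applies. The trade-off is that the paper's direct proof of (vii) is softer --- it uses only the definition of neighborhood and directedness, with no appeal to subnets or (N2) --- so the quasi-subnet machinery stays confined to (iii), whereas your route deploys that machinery a second time; on the other hand, your decomposition keeps the two ``hard'' items structurally parallel, both driven by the same cofinality device. Two of your side remarks are also accurate and worth keeping: axiom (N3) is indeed never needed (the paper's proof does not use it either), and your explicit handling of admissibility in (i) via tail equivalence and (N2b) spells out what the paper dismisses as straightforward.
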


\begin{proof}
  \eqref{bas-cont-comp}, \eqref{bas-cl-subset}, \eqref{bas-cl-inters}
   are straightforward.

  To prove~\eqref{bas-compl}, first suppose that $A$ is open. Let
  $(x_\alpha)$ be a net in $A^C$ such that $x_\alpha\to x$, yet $x\in
  A$. Then a tail of the net is in~$A$, which is a contradiction.

  Conversely, if $A^C$ is closed, $(x_\alpha)_{\alpha\in\Lambda}$
  converges to $x\in A$, and no tail of the net is contained in~$A$,
  then let $\Lambda_0=\{\alpha\in\Lambda\mid x_\alpha\in A^C\}$. It is
  easy to see that $\Lambda$ is directed and the restriction of
  $(x_\alpha)_{\alpha\in\Lambda}$ to $\Lambda_0$ is a quasi-subnet of
  $(x_\alpha)_{\alpha\in\Lambda}$. This quasi-subnet is contained in
  $A^C$ and still converges to $x$ by (N2). Since this implies
  $x\in A^C$, we have reached a contradiction.

  \eqref{bas-op-union} follows from \eqref{bas-compl} and
  \eqref{bas-cl-inters}.

  \eqref{bas-op-inters} It suffices to prove the statements for two
  sets and apply induction. Let $A_1$ and $A_2$ be two open sets; we
  need to show that $A_1\cap A_2$ is open. Suppose that
  $x\in A_1\cap A_2$ and $x_\alpha\to x$. Since $x\in A_1$ and $A_1$
  is open, there exists $\alpha_1$ such that the tail set
  $\{x_\alpha\}_{\alpha\ge\alpha_1}$ is contained in~$A_1$. Similarly,
  some tail set $\{x_\alpha\}_{\alpha\ge\alpha_2}$ is contained
  in~$A_2$. Now choose any $\alpha_0\ge\alpha_1,\alpha_2$ and note
  that the tail set $\{x_\alpha\}_{\alpha\ge\alpha_0}$ is contained in
  $A_1\cap A_2$.

  \eqref{bas-cl-union} follows from \eqref{bas-compl} and
  \eqref{bas-op-inters}.
\end{proof}

Even though the terminology resembles that from topology, note that
there are important differences. In particular, in a convergence
structure, the closure of a set need not be closed:

\begin{example}
  The following example is derived from \cite{Beattie:02}. Let
  $X=\mathbb R$ and define a convergence $x_\alpha\to x$ as follows:
  if $x\ne 0$ put $x_\alpha\to x$ whenever $(x_\alpha)$ converges to
  $x$ in the usual topology of~$\mathbb R$; otherwise, define
  $x_\alpha\to 0$ whenever $(x_\alpha)$ converges to $0$ in the usual
  topology of $\mathbb R$ and a tail of the net is contained
  in~$\mathbb Q$. Observe that
  $\overline{\mathbb R\setminus\mathbb Q}=\mathbb
  R\setminus\{0\}$. The latter set is not closed since its closure
  is~$\mathbb R$. In particular, this shows that changing the
  convergence at a single point is enough to spoil a topological
  convergence.
\end{example}

There are several natural ways in which a given net convergence
structure induces another convergence. We call the new structure a
\term{modification} of the original one. Let $X$ be a net convergence
space. It follows from Proposition~\ref{bas} that the collection of
all open subsets of $X$ forms a topology, say,~$\tau$. We call $\tau$
the topology associated with the convergence structure. This topology
induces its own convergence, which we will denote
$x_\alpha\goestau x$; it is called the \term{topological modification}
of the original convergence structure.

\begin{proposition}\label{tmod-finest}
  Let $(X,\to)$ be a convergence space and $\tau$ its topological
  modification. Then $x_\alpha\to x$ implies $x_\alpha\goestau
  x$. Moreover, $\tau$ is the finest topology on $X$ whose convergence
  is weaker than the original convergence.
\end{proposition}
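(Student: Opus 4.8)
The plan is to establish the two assertions separately, working directly from the definitions of \emph{neighborhood} and \emph{open set} in a net convergence space, since these were crafted precisely to mirror the way open neighborhoods govern topological convergence. For the first assertion, I would take a net with $x_\alpha\to x$ and an arbitrary $\tau$-open set $U$ containing~$x$. By the construction of~$\tau$, the set $U$ is open in the convergence structure, so it is a neighborhood of each of its points, in particular of~$x$. The definition of neighborhood then yields that $(x_\alpha)$ has a tail contained in~$U$. Since this holds for every $\tau$-open set containing~$x$, the net is eventually in each such set, which is exactly the criterion for $x_\alpha\goestau x$. This already shows that $\goestau$ is weaker than the original convergence, so $\tau$ is one of the topologies under consideration in the ``moreover'' clause.

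For the optimality claim, I would let $\sigma$ be any topology on $X$ whose induced convergence is weaker than the original, and aim to show $\sigma\subseteq\tau$; this makes $\tau$ finer than every such topology and hence the finest among them. Let $U$ be $\sigma$-open; I claim $U\in\tau$, that is, $U$ is open in the convergence structure. Fix $x\in U$ and let $x_\alpha\to x$ in the original convergence. Because $\sigma$-convergence is weaker, the net $\sigma$-converges to~$x$, and since $U$ is a $\sigma$-open set containing~$x$, the net has a tail $\{x_\alpha\}_{\alpha\ge\alpha_0}\subseteq U$. This is precisely the assertion that $U$ is a neighborhood of~$x$ in the convergence sense. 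As $x\in U$ was arbitrary, $U$ is a neighborhood of each of its points, hence open, so $U\in\tau$. Combined with the first part, this gives that $\tau$ is the finest topology whose convergence is weaker than the original.

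Both arguments are short, so the ``main obstacle'' is really one of bookkeeping rather than depth. The step I would watch most closely is the translation between the two descriptions of a net being eventually inside~$U$: the purely topological statement that $x_\alpha$ converges to $x$ in $\tau$ (equivalently, lies eventually in every $\tau$-open set containing~$x$) must be matched against the net-convergence statement that the net has a \emph{tail} contained in~$U$. These coincide because a tail $\{x_\alpha\}_{\alpha\ge\alpha_0}\subseteq U$ is exactly a witness that the net is eventually in~$U$, but I would keep this equivalence explicit in both directions. The other point to state carefully is the direction of the ordering — that a weaker convergence corresponds to a coarser topology, so that ``finest topology with weaker convergence'' is the $\subseteq$-maximal element — which is what the containment $\sigma\subseteq\tau$ delivers.
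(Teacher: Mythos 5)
Your proof is correct and takes essentially the same approach as the paper: the first part is identical, and for the maximality claim the paper's proof merely dualizes your argument, showing that every $\sigma$-closed set is closed in $(X,\to)$ and hence $\tau$-closed, whereas you work directly with $\sigma$-open sets and the neighborhood definition. The two are equivalent via the open/closed complementation already established in Proposition~\ref{bas}, so no substantive difference remains.
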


\begin{proof}
  If $x_\alpha\to x$ then, by the definition of an open set, every open
  neighborhood of $x$ contains a tail of $(x_\alpha)$ and hence
  $x_\alpha\goestau x$.

  Let $\sigma$ be a topology on $X$ such that $x_\alpha\to x$
  implies $x_\alpha\xrightarrow{\sigma}x$. Let $A$ be a
  $\sigma$-closed set. If $(x_\alpha)$ is a net in $A$ and
  $x_\alpha\to x$, then $x_\alpha\xrightarrow{\sigma}x$; so
  $x\in A$. It follows that $A$ is closed in $(X,\to)$, so it is
  $\tau$-closed.
\end{proof}

It follows that the topological modification is generally weaker than the
original convergence. We will see in Example~\ref{otop-nonHaus} that
it may be dramatically weaker.  Clearly, a net convergence structure
is topological precisely when it equals its topological modification.

\subsection*{Net convergence vector spaces}
Throughout the rest of the section, let $X$ be a vector space
over~$\mathbb K$, where $\mathbb K$ is either $\mathbb R$
or~$\mathbb C$, equipped with the usual convergence. A
net convergence structure on $X$ is said to be \term{linear} if
addition and scalar multiplication operations are jointly
continuous. We then say that $X$ is a \term{net convergence vector
  space}. It is easy to see that $x_\alpha\to x$ iff
$x_\alpha-x\to 0$.

A linear operator $T\colon X\to Y$ between two net convergence vector
spaces is said to be an \term{isomorphism} if $T$ is a bijection and
both $T$ and $T^{-1}$ are continuous. We say that $T$ is an
\term{isomorphic embedding} if $T$ is an isomorphism when viewed as a
map from $X$ to $\Range T$.

\begin{definition}\label{def:bdd}
  A subset $A$ of net convergence vector space $X$ is said to be
  \term{bounded} if $(r_\alpha x)_{(\alpha,x)}\to 0$ whenever
  $(r_\alpha)_{\alpha\in\Lambda}\to 0$ in~$\mathbb K$. In this definition
  $(r_\alpha x)_{(\alpha,x)}$ is interpreted as a net over the index
  set $\Lambda\times A$ directed via $(\alpha,x)\le(\beta,y)$ whenever
  $\alpha\le\beta$. With a minor abuse of notation, we will denote
  this net by $(r_\alpha A)$.  A net convergence vector space is said
  to be \term{locally bounded} if every convergent net has a bounded
  tail.
\end{definition}

\begin{proposition}\label{bdd}
  Let $A$ and $B$ be subsets of a convergence vector space~$X$.
  \begin{enumerate}
  \item\label{bdd-sub} If $A$ is bounded and $B\subseteq A$ then $B$
    is bounded;
  \item\label{bdd-scal} If $A$ is bounded then $rA$ is bounded for
    every $r\ge 0$;
  \item\label{bdd-union} If $A$ and $B$ are both bounded then so are
    $A\cup B$ and $A+B$;
  \item\label{bdd-fin} Every finite set is bounded.
  \end{enumerate}
\end{proposition}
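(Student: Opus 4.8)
The plan is to unwind Definition~\ref{def:bdd} in each case, exploiting one simple observation: because the index set $\Lambda\times A$ is ordered only through its first coordinate, the tail set of $(r_\alpha A)$ at $(\alpha_0,x_0)$ is $\{r_\alpha x\mid\alpha\ge\alpha_0,\ x\in A\}$, which depends on $\alpha_0$ alone. Since convergence is insensitive to tail equivalence by (N2b), to show a set is bounded it will suffice to compare these tail sets with those of auxiliary nets whose convergence is already known. Throughout I fix an arbitrary net $(r_\alpha)_{\alpha\in\Lambda}\to 0$ in $\mathbb K$ and treat the (harmless) empty-set cases trivially.

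For~\eqref{bdd-sub}, if $B\subseteq A$ then every tail set of $(r_\alpha B)$ is contained in the corresponding tail set of $(r_\alpha A)$, so $(r_\alpha B)$ is a quasi-subnet of $(r_\alpha A)$; as the latter converges to $0$ by boundedness of $A$, so does the former by (N2). For~\eqref{bdd-scal} with $r>0$, writing each element of $rA$ as $rx$ with $x\in A$ gives $r_\alpha(rx)=(rr_\alpha)x$, and $(rr_\alpha)_{\alpha\in\Lambda}\to 0$ in $\mathbb K$ since multiplication by the fixed scalar $r$ is continuous. A direct computation shows $(r_\alpha\,rA)$ and $\bigl((rr_\alpha)x\bigr)_{(\alpha,x)\in\Lambda\times A}$ have identical tail sets, so they are strongly tail equivalent; the second is $(rr_\alpha)A$, which converges to $0$ because $A$ is bounded, hence so does the first. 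The case $r=0$ is immediate: $0\cdot A=\{0\}$ and the resulting net is constant at $0$, so it converges by (N1).

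The heart of the argument is~\eqref{bdd-union}. For $A\cup B$ (both assumed nonempty) I would fix $a_0\in A$, $b_0\in B$ and define two nets on the common index set $\Lambda\times(A\cup B)$ by setting $u_{(\alpha,z)}=r_\alpha z$ when $z\in A$ and $u_{(\alpha,z)}=r_\alpha a_0$ otherwise, and symmetrically for $v$ using $B$ and $b_0$. Since $a_0\in A$, the extra values contribute nothing new to the tail sets, so $u$ is strongly tail equivalent to $(r_\alpha A)$ and hence $u\to 0$; likewise $v\to 0$. As $z\in A\cup B$ forces $r_\alpha z\in\{u_{(\alpha,z)},v_{(\alpha,z)}\}$, axiom (N3) yields $(r_\alpha\,(A\cup B))\to 0$. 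For $A+B$ I would instead pass to the index set $\Lambda\times A\times B$ (cf.\ Remark~\ref{same-ind}) and consider the nets $p_{(\alpha,x,y)}=r_\alpha x$ and $q_{(\alpha,x,y)}=r_\alpha y$; each has the same tail sets as $(r_\alpha A)$ and $(r_\alpha B)$ respectively, so $p\to 0$ and $q\to 0$. Joint continuity of addition then gives $p+q\to 0$, and since the tail sets of $\bigl(r_\alpha(x+y)\bigr)_{(\alpha,x,y)}$ coincide with those of $(r_\alpha\,(A+B))$, the latter converges to $0$ as well. Finally, \eqref{bdd-fin} follows by writing a nonempty finite set as a finite union of singletons: a singleton $\{x\}$ is bounded because joint continuity of scalar multiplication applied to $r_\alpha\to 0$ and the constant net $x$ gives $r_\alpha x\to 0$, and then \eqref{bdd-union} and induction finish the proof.

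The main obstacle I anticipate is bookkeeping rather than conceptual: in~\eqref{bdd-union} one must choose the auxiliary index sets and padding so that the constructed nets genuinely have the prescribed tail sets, so that (N3) and the continuity of addition apply on a common directed index set. The $A+B$ case is the most delicate, since it requires simultaneously pulling the $A$- and $B$-nets back to $\Lambda\times A\times B$, invoking linearity, and pushing the result forward to $\Lambda\times(A+B)$ via strong tail equivalence.
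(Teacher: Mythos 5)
Your proof is correct and follows essentially the same route as the paper: the same padding construction with $a_0$, $b_0$ and appeal to (N3) for $A\cup B$, continuity of addition for $A+B$, and singletons combined with the union case for finite sets. The only cosmetic difference is in the $A+B$ argument, where you pull both nets back to $\Lambda\times A\times B$ and use strong tail equivalence, while the paper fixes decomposition functions $d\mapsto f(d)\in A$, $d\mapsto g(d)\in B$ and works over $\Lambda\times(A+B)$ with quasi-subnets and (N2) --- both variants are equally valid (yours incidentally avoids the choice functions).
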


\begin{proof}
  The proofs of \eqref{bdd-sub} and~\eqref{bdd-scal} are
  straightforward. To prove~\eqref{bdd-union} we let $A$ and $B$ be two
  bounded sets in~$X$, $C=A\cup B$, and
  $(r_\alpha)_{\alpha\in\Lambda}\to 0$ in~$\mathbb K$. We may assume
  without loss of generality that both $A$ and $B$ are non-empty. As
  before, we view $\Lambda\times A$, $\Lambda\times B$, and
  $\Lambda\times C$ as directed sets under $(\alpha,x)\le(\beta,y)$
  whenever $\alpha\le\beta$. Fix any $a_0\in A$ and $b_0\in B$. Define
  two nets, one in $A$ and the other in~$B$, both indexed by
  $\Lambda\times C$ as follows: for $(\alpha,c)$ in $\Lambda\times C$,
  put
  \begin{displaymath}
    x_{(\alpha,c)}=
    \begin{cases}
      r_\alpha c &\mbox{ if }c\in A\\
      r_\alpha a_0 &\mbox{ otherwise}
    \end{cases}
    \qquad
     y_{(\alpha,c)}=
    \begin{cases}
      r_\alpha c &\mbox{ if }c\in B\\
      r_\alpha b_0 &\mbox{ otherwise.}
    \end{cases}
  \end{displaymath}Observe that
  \begin{math}
      (x_{(\alpha,c)})_{(\alpha,c)\in\Lambda\times C}
  \end{math}
  is
  strongly tail equivalent to the net $(r_\alpha A)$.
  Since $A$ is bounded, the latter net converges to zero and hence,
  $x_{(\alpha,c)}\to 0$. Similarly, we have $y_{(\alpha,c)}\to 0$. Observe
  that for every $(\alpha,c)$ in $\Lambda\times C$ we have
  \begin{displaymath}
    r_\alpha c=
    \begin{cases}
      x_{(\alpha,c)} & \mbox{ if }c\in A\\
      y_{(\alpha,c)} & \mbox{ if }c\in B.
    \end{cases}
  \end{displaymath}
  Axiom (N3) yields $(r_\alpha c)_{(\alpha,c)\in\Lambda\times C}\to
  0$, which shows $C$ is bounded.

  Let $D=A+B$. For every $d\in D$, choose $a\in A$ and $b\in B$ such
  that $d=a+b$, and put $a=f(d)$ and $b=g(d)$. Define two nets indexed by
  $\Lambda\times D$ as follows:
  \begin{math}
    x_{(\alpha,d)}=r_\alpha f(d)
  \end{math}
  and
  \begin{math}
    y_{(\alpha,d)}=r_\alpha g(d).
  \end{math}
  It is easy to see
  \begin{math}
    (x_{\alpha,d})_{\Lambda\times D}\preceq(r_\alpha A)
  \end{math}
  and
  \begin{math}
    (y_{\alpha,d})_{\Lambda\times D}\preceq(r_\alpha B)
  \end{math}
  hence
  $x_{\alpha,d}\to 0$ and $y_{\alpha,d}\to 0$. It follows that
  \begin{math}
    r_\alpha d=x_{\alpha,d}+y_{\alpha,d}\to 0
  \end{math}
  and, as a consequence, $D$ is bounded.
  
  \eqref{bdd-fin} It follows from the continuity of scalar
  multiplication that every singleton is bounded; now
  apply~\eqref{bdd-union}.
\end{proof}

Recall that the \term{circled hull} of a set $A$ is the set
$\{rx\mid \abs{r}\le 1,\ x\in A\}$. We say that $A$ is \term{circled}
if it equals its circled hull or, equivalently, $x\in A$ implies
$rx\in A$ whenever $\abs{r}\le 1$. Note that
if $A$ is bounded then so is its circled hull. While this fact can be
verified directly, it will be almost trivial in the filter language:
see page~\pageref{circ-bdd}. It is also easy to see that if $A$ is
circled then the net $(r_\alpha)$ in Definition~\ref{def:bdd} may be
taken to be the sequence $(\frac1n)_{n\in\mathbb N}$. That is, a
circled set $A$ is bounded iff $\frac1n A\to 0$. We also make use of
the fact that a bounded set $A$ has a bounded convex
hull: the latter is contained in $C+C$ where $C$ is the circled
hull of~$A$.

A net $(x_\alpha)_{\alpha\in\Lambda}$ in a net convergence vector
space is said to be \term{Cauchy} if the net
$(x_\alpha-x_\beta)_{(\alpha,\beta)\in\Lambda\times\Lambda}$ converges
to zero. Here $\Lambda\times\Lambda$ is directed coordinate-wise.  It
is easy to see that every convergent net is Cauchy. Indeed,
put $y_{(\alpha,\beta)}=x_\alpha$ and
$z_{(\alpha,\beta)}=x_\beta$. The nets $(y_{(\alpha,\beta)})$ and
$(z_{(\alpha,\beta)})$ are both strongly tail equivalent to
$(x_\alpha)$, so they both converge to~$x$. It follows that
  \begin{math}
    x_\alpha-x_\beta=y_{(\alpha,\beta)}-z_{(\alpha,\beta)}\to 0.
  \end{math}
A net convergence vector space is said to be \term{complete} if every
Cauchy net is convergent.

Let $X$ be a topological vector space. In this case, our definitions
of boundedness, Cauchy and completeness agree with the standard ones
given in \cite[pp.~44 and~56]{Kelley:76}. In fact, for the terms
Cauchy and completeness, our definitions are exactly the same. As for
boundedness, according to \cite[p.~44]{Kelley:76}, a subset $A$ of a
topological vector space is bounded if for every neighborhood of zero
$U$ there exists a $t>0$ such that $A\subseteq tU$. It is easy to see
that this agrees with our Definition~\ref{def:bdd}.
%

Let $(X,\to)$ be a Hausdorff net convergence vector space. We define a
new net convergence structure on $X$ as follows: for a
net $(x_\alpha)$, we write $x_\alpha\goesm x$ if there exists a
bounded set $B$ such that for every $\varepsilon>0$ there is an
$\alpha_0$ such that $x_\alpha-x\in\varepsilon B$ whenever
$\alpha\ge\alpha_0$. It can be easily verified that $(X,\goesm)$ is
indeed a linear net convergence structure on~$X$; it is called the
\term{Mackey modification} of the original structure.

\begin{proposition}
  If $x_\alpha\goesm x$ then $x_\alpha\to x$.
\end{proposition}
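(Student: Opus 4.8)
The plan is to reduce to the origin and then realize $(x_\alpha)$ as a quasi-subnet of a concrete net that witnesses the boundedness of the Mackey-controlling set, so that axiom (N2) finishes the argument. First I would translate to $0$: since $x_\alpha\goesm x$ means precisely $x_\alpha-x\in\varepsilon B$ eventually, i.e.\ $(x_\alpha-x)\goesm 0$, and since $x_\alpha\to x$ iff $x_\alpha-x\to 0$, we may assume $x=0$. Thus the hypothesis supplies a bounded set $B$ such that for every $\varepsilon>0$ there is an index $\alpha_\varepsilon$ with $x_\alpha\in\varepsilon B$ whenever $\alpha\ge\alpha_\varepsilon$. I would also note that we may assume $B\ne\emptyset$ (otherwise the condition forces an empty tail); if one prefers, replace $B$ by $B\cup\{0\}$, which is still bounded by Proposition~\ref{bdd}.

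The key step is to feed a specific null scalar net into the definition of boundedness. Consider the sequence $(\tfrac1n)_{n\in\mathbb N}$, which converges to $0$ in $\mathbb K$. By Definition~\ref{def:bdd}, the net $(\tfrac1n B)=(\tfrac1n b)_{(n,b)\in\mathbb N\times B}$, directed by its first coordinate, converges to $0$; here $\mathbb N\times B$ is directed and nonempty precisely because $B\ne\emptyset$. The tail set of this net determined by an index $(n_0,b_0)$ is exactly $\bigcup_{n\ge n_0}\tfrac1n B$.

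It then remains to check that $(x_\alpha)$ is a quasi-subnet of $(\tfrac1n B)$: given any tail set $\bigcup_{n\ge n_0}\tfrac1n B$, applying the Mackey hypothesis with $\varepsilon=\tfrac1{n_0}$ gives $\{x_\alpha\}_{\alpha\ge\alpha_{1/n_0}}\subseteq\tfrac1{n_0}B\subseteq\bigcup_{n\ge n_0}\tfrac1n B$, which is exactly the tail-set containment required by the definition of ``$\preceq$''. Hence $(x_\alpha)\preceq(\tfrac1n B)$, and since $(\tfrac1n B)\to 0$, axiom (N2) yields $x_\alpha\to 0$, as desired. The only real insight, and the step I expect to be the crux, is recognizing that the set-theoretic containment of tails is itself the quasi-subnet relation; phrasing it this way makes the choice $\varepsilon=\tfrac1{n_0}$ do all the work and sidesteps any per-term factorization $x_\alpha=r_\alpha b_\alpha$ together with its boundary cases.
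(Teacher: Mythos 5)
Your proof is correct and follows exactly the paper's argument: reduce to $x=0$ by linearity, note that boundedness of $B$ makes the net $\bigl(\tfrac1n b\bigr)_{(n,b)\in\mathbb N\times B}$ converge to zero, verify that $(x_\alpha)$ is a quasi-subnet of it (the tail-containment check via $\varepsilon=\tfrac1{n_0}$ is precisely the detail the paper leaves to the reader with ``now observe''), and finish with (N2). The paper's proof is a three-line version of the same argument, so there is nothing to compare beyond your added level of detail.
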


\begin{proof}
  Since both structures are linear, we may assume that
  $x=0$. Let $B$ be a bounded set that witnesses
  $x_\alpha\goesm 0$ as above. The net $(\frac1n B)$ 
  as in Definition~\ref{def:bdd} satisfies $\frac1nb\to 0$. Now
  observe that $(x_\alpha)$ is a quasi-subnet of $(\frac1n B)$.
\end{proof}

\begin{proposition}\label{Mack-bdd}
  Let $T\colon X\to Y$ be a linear map between net convergence vector
  spaces $X$ and~$Y$. The following are equivalent:
  \begin{enumerate}
  \item\label{Mack-bdd-bdd} $T$ is bounded; that is, $T$ maps bounded
    sets to bounded sets; 
  \item\label{Mack-bdd-mm} $T$ is Mackey-continuous:
    $x_\alpha\goesm x$ implies $Tx_\alpha\goesm Tx$;
  \item\label{Mack-bdd-m} $x_\alpha\goesm x$ implies $Tx_\alpha\to Tx$.
  \end{enumerate}
\end{proposition}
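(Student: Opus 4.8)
The plan is to prove the cyclic chain of implications \eqref{Mack-bdd-bdd}~$\Rightarrow$~\eqref{Mack-bdd-mm}~$\Rightarrow$~\eqref{Mack-bdd-m}~$\Rightarrow$~\eqref{Mack-bdd-bdd}. The middle implication is essentially free: assuming $T$ is Mackey-continuous, if $x_\alpha\goesm x$ then $Tx_\alpha\goesm Tx$, and the preceding proposition (that Mackey convergence is stronger than the original convergence) immediately gives $Tx_\alpha\to Tx$, which is~\eqref{Mack-bdd-m}.

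For \eqref{Mack-bdd-bdd}~$\Rightarrow$~\eqref{Mack-bdd-mm}, suppose $T$ is bounded and $x_\alpha\goesm x$. Since both Mackey structures are linear, I would first reduce to the case $x=0$ by replacing $(x_\alpha)$ with $(x_\alpha-x)$ and using $Tx_\alpha-Tx=T(x_\alpha-x)$. Let $B$ be a bounded set witnessing $x_\alpha\goesm 0$, so that for each $\varepsilon>0$ some tail of $(x_\alpha)$ lies in $\varepsilon B$. Applying $T$ and using linearity, $T(\varepsilon B)=\varepsilon T(B)$, so the corresponding tail of $(Tx_\alpha)$ lies in $\varepsilon T(B)$. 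As $T$ is bounded, $T(B)$ is bounded, and hence $T(B)$ witnesses $Tx_\alpha\goesm 0=T0$, as required.

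The substance of the proposition is the implication \eqref{Mack-bdd-m}~$\Rightarrow$~\eqref{Mack-bdd-bdd}, and this is where I expect the main obstacle. Fix a bounded set $A$; I must show $T(A)$ is bounded, i.e.\ that $(r_\alpha T(A))\to 0$ in $Y$ whenever $r_\alpha\to 0$ in $\mathbb K$. The key observation is that the net $(r_\alpha A)$ itself Mackey-converges to $0$ in $X$: taking $B$ to be the circled hull of $A$---which is bounded because $A$ is---one has, for each $\varepsilon>0$ and all $\alpha$ with $\abs{r_\alpha}\le\varepsilon$, that $r_\alpha a\in\varepsilon B$ for every $a\in A$, since $\tfrac{r_\alpha}{\varepsilon}a$ lies in the circled hull. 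As $r_\alpha\to 0$, such a tail exists for every $\varepsilon$, so $(r_\alpha A)\goesm 0$. Now hypothesis~\eqref{Mack-bdd-m} applies to the net $(r_\alpha A)$ and yields that its image under $T$ converges to $T0=0$. Finally I would note that this image, the net $(\alpha,a)\mapsto r_\alpha T(a)$ over $\Lambda\times A$, has exactly the same tail sets as $(r_\alpha T(A))$ over $\Lambda\times T(A)$, hence is strongly tail equivalent to it; by (N2b) the latter also converges to $0$. Since $r_\alpha\to 0$ was arbitrary, $T(A)$ is bounded.

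The main obstacle is precisely the construction in \eqref{Mack-bdd-m}~$\Rightarrow$~\eqref{Mack-bdd-bdd}: recognizing that boundedness of $A$ can be repackaged as the single Mackey convergence $(r_\alpha A)\goesm 0$, so that hypothesis~\eqref{Mack-bdd-m} can be invoked, together with the bookkeeping that $T$ applied to the net $(r_\alpha A)$ is tail equivalent to $(r_\alpha T(A))$, which lets one transfer the resulting convergence back into the definition of boundedness of $T(A)$.
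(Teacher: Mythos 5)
Your proposal is correct and follows essentially the same route as the paper: the cyclic chain with (ii)$\Rightarrow$(iii) via the preceding proposition, and the key step (iii)$\Rightarrow$(i) obtained by packaging boundedness of $A$ as the Mackey convergence $(r_\alpha A)\goesm 0$ witnessed by a circled bounded set (the paper assumes $B$ circled without loss of generality where you pass to the circled hull, which is the same device). Your explicit check that the image net over $\Lambda\times A$ is strongly tail equivalent to $(r_\alpha T(A))$ merely spells out a step the paper leaves implicit.
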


\begin{proof}
  \eqref{Mack-bdd-bdd}$\Rightarrow$\eqref{Mack-bdd-mm} is straightforward; 
  \eqref{Mack-bdd-mm}$\Rightarrow$\eqref{Mack-bdd-m} follows from the
  preceding proposition.

  \eqref{Mack-bdd-m}$\Rightarrow$\eqref{Mack-bdd-bdd} Suppose that $B$
  is a bounded subset of~$X$. It remains to show that $T(B)$ is
  bounded. We may assume that~$B$, and therefore $T(B)$, is circled. Let
  $(r_\alpha)_{\alpha\in\Lambda}\to 0$ in~$\mathbb R$. Since $B$ is
  bounded, we have $(r_\alpha b)\goesm 0$ where $(r_\alpha b)$ is indexed by
  $\Lambda\times B$ directed by the first component. It follows that
  $(r_\alpha Tb)\to 0$, so $T(B)$ is bounded.
\end{proof}

A linear convergence structure is \term{Mackey} if it agrees with its
Mackey modification.

\section{Filter convergence structures}
\label{sec:fcs}

This section contains a brief review of terminology from filter
convergence structures; we refer the reader to~\cite{Beattie:02} for a
detailed exposition.

We begin by fixing some notation. For a set~$X$, a non-empty
collection $\mathcal B$ of non-empty subsets of $X$ is said to be a
\term{filter base} if it is directed with respect to reverse
inclusion: for all $A,B\in\mathcal B$ there exists $C\in\mathcal B$
such that $C\subseteq A\cap B$. A non-empty collection $\mathcal F$ of
non-empty subsets of $X$ is said to be a \term{filter} if it is closed
under taking supersets and finite intersections. It is easy to see
that every filter is a filter base. For a filter base $\mathcal B$, we
use~$[\mathcal B]$ to denote the collection of all supersets of sets
in~$\mathcal B$; this forms a filter and is called the \term{filter
  generated by}~$\mathcal B$. We write $\mathfrak F(X)$ for the set of
all filters on~$X$.

A filter $\mathcal U$ is said to be an \term{ultrafilter} if it is not
properly contained in another filter. Equivalently, for every
$A\subseteq X$, either $A\in\mathcal U$ or $A^C\in\mathcal U$. It
follows from Zorn's Lemma that every filter is contained in an
ultrafilter. Any singleton $\{x\}$ is clearly a filer base; in this
case $[\{x\}]$ is denoted by~$[x]$. It is easy to verify this is an
ultrafilter.

Let $X$ be a set and $\lambda$ be a map from $X$ to
$\mathcal P\bigl(\mathfrak F(X)\bigr)$. We write
$\mathcal F\xrightarrow{\lambda} x$ instead of
$\mathcal F\in\lambda(x)$, or $\mathcal F\to x$ when there is no risk
of confusion, and say that $\mathcal F$ converges to~$x$. In any case
$x$ is called a \term{limit} of~$\mathcal F$. We say that $\lambda$ is
a \term{filter convergence structure} if the following axioms are
satisfied:
\begin{itemize}
  \item [(F1)] $[x]\to x$ for every $x\in X$;
  \item [(F2)] If $\mathcal F\to x$ and $\mathcal F\subseteq\mathcal
    G$ then $\mathcal G\to x$;
  \item [(F3)] If $\mathcal F\to x$ and $\mathcal G\to x$ then
    $\mathcal F\cap\mathcal G\to x$.
\end{itemize}
A set equipped with a filter convergence structure is called a
\term{filter convergence space}.

Every topology on $X$ gives rise to a natural filter convergence structure. It
is easy to see that for every $x\in X$ the collection of all
neighborhoods of $x$ forms a filter; we will denote it
by~$\mathcal N_x$. Consider a filter $\mathcal F$ convergent to
$x$ if $\mathcal N_x \subseteq \mathcal F$. It is easy to see that
this defines a filter convergence structure on~$X$; any filter
convergence structure that arises in this way is called \term{topological}.

Let $X$ be a filter convergence space. We say that $X$ is
\term{Hausdorff} if every filter has at most one limit. For $x\in X$,
the filter $\mathcal U_x=\bigcap\{\mathcal F\mid\mathcal F\to x\}$ is
called the \term{neighborhood filter} of~$x$, and any set $U\in\mathcal U_x$
is called a \term{neighborhood} of~$x$. A subset $A$ of $X$ is
\term{open} if it is a neighborhood of each of its points. Again,
given $A\subseteq X$, we define
\begin{displaymath}
  \overline{A}=\bigl\{x\in X\mid \mathcal F\to x
  \mbox{ for some filter }\mathcal F\mbox{ such that }
  A\in\mathcal F\bigr\}.
\end{displaymath}
We call $\overline{A}$ the \term{closure} or the \term{adherence}
of~$A$. Naturally, we say that $A$ is \term{closed} if
$\overline{A}=A$ and \term{dense} if $\overline{A}=X$. It can be
verified that $A$ is open iff $A^C$ is closed; see Lemma~1.3.4
in~\cite{Beattie:02}. It can also be easily verified that the
collection of all open sets in $X$ form a topology. As outlined above,
this topology induces a natural filter convergence structure on~$X$;
we call it the \term{topological modification} of the original filter
convergence structure.

Let $f\colon X\to Y$ be a function between two filter convergence
spaces. For every filter $\mathcal F$ on~$X$, the collection
$\{f(A)\mid A\in\mathcal F\}$ is a filter base in~$Y$; the filter
that it generates is called the
\term{image filter} of $\mathcal F$ under $f$ and is denoted by $f(\mathcal
F)$. We say that $f$ is \term{continuous} at $x$ if $\mathcal F\to x$
implies $f(\mathcal F)\to f(x)$. 

Let $\mathcal A$ be a family of functions from a set $X$ to filter
convergence spaces. For every $x\in X$, put $\mathcal F\to x$ in $X$
whenever $f(\mathcal F)\to f(x)$ for every $f\in\mathcal A$. This is
the weakest filter convergence structure that makes all
$f\in\mathcal A$ continuous. This allows us to define the
\term{subspace filter convergence structure} on a subset of a filter
convergence space as the weakest structure that makes the inclusion
map continuous. Likewise, we define the \term{product filter
  convergence structure} on the Cartesian product of filter convergent
spaces as the weakest structure that makes all the coordinate
projections continuous.

A filter convergence space $X$ is said to be \term{compact} if every
ultrafilter converges. A subset $A$ of $C$ is said to be compact if it
is compact in the subspace filter convergence structure induced
by~$X$. We say that $X$ is \term{locally compact} if every convergent
filter on $X$ contains a compact set.


A filter convergence space is \term{first countable at} $x$ if
$\mathcal F\to x$ implies that there exists a filter
$\mathcal G$ with a countable base such that
$\mathcal G\subseteq\mathcal F$ and $\mathcal G\to x$. A filter
convergence space is said to be \term{first countable} if it is first
countable at every point.

We now take $X$ to be a vector space over~$\mathbb K$. A filter
convergence structure on $X$ is said to be linear if the operations of
addition and scalar multiplication are jointly continuous; here
$\mathbb K$ is equipped with the filter convergence structure induced
by the usual topology of~$\mathbb K$. We then call $X$ a \term{filter
  convergence vector space}.

Let $X$ be a filter convergence vector space and $A\subseteq X$. Let
$\mathcal N_0$ denote the filter of all neighborhoods of zero
in~$\mathbb K$ and put
\begin{math}
  \mathcal N_0A=\{UA\mid U\in\mathcal N_0\}.
\end{math}
It is easy to see that $\mathcal N_0A$ is a filter base. $A$ is said
to be a \term{bounded} set if $[\mathcal N_0A]\to 0$ in~$X$. The space
$X$ is \term{locally bounded} if every convergent filter contains a
bounded set.
\label{circ-bdd}
Let $A$ be a bounded set and use $C$ to denote
its circled hull. It is easy to see that
$\mathcal N_0A=\mathcal N_0C$, so $C$ is also bounded.

%
%

Let $X$ be a filter convergence vector space and $\mathcal F$ a
filter on~$X$. The collection
\begin{math}
  \{A-B\mid A,B\in\mathcal F\}
\end{math}
is a filter base; we write $\mathcal F-\mathcal F$ for the filter
generated by this collection. $\mathcal F$ is called
\term{Cauchy} if $\mathcal F-\mathcal F\to 0$ and we say that $X$ is
\term{complete} if every Cauchy filter on $X$ converges.

For a filter $\mathcal F$ on a filter convergence vector space~$X$,
define $\mathcal F\goesm 0$ if $\mathcal F\supseteq\mathcal N_0B$ for
some bounded set~$B$. It follows that $\mathcal F\goesm x$ if
$\mathcal F-x\goesm 0$ defines a new linear filter convergence
structure on~$X$, called the \term{Mackey modification} of the
original structure.

\section{Equivalence of net and filter structures}
\label{sec:equiv}

We now show that the concepts of net and filter convergence structures
are equivalent. Throughout this section $X$ denotes an arbitrary set,
unless specified otherwise.

Observe that for a net $(x_\alpha)$, its collection of tail sets
$\langle x_\alpha\rangle$ is a filter base; we will call it the
\term{tail filter base} of $(x_\alpha)$. The filter generated by this
filter base will be called the \term{tail filter} of $(x_\alpha)$ and
denoted by $[x_\alpha]$. The definition of strong tail equivalence
between two nets means they have the same tail filter
bases. This is a stronger property than
two nets being tail equivalent, which can be characterized in terms of
tail filters: $(x_\alpha)\preceq(y_\beta)$ iff
$[y_\beta]\subseteq[x_\alpha]$. Thus, two nets are tail
equivalent iff their tail filters agree.

\begin{proposition}\label{tfb-net}
  Every filter base on $X$ is the tail filter base of some admissible
  net in~$X$.
\end{proposition}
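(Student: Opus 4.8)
The plan is to construct the net explicitly from the filter base. Given a filter base $\mathcal{B}$ on $X$, I would take as index set the collection of ``pointed members'' $A = \{(B,x) : B \in \mathcal{B},\ x \in B\}$, pre-ordered by reverse inclusion on the first coordinate: $(B_1,x_1) \le (B_2,x_2)$ precisely when $B_2 \subseteq B_1$. The net itself is the second-coordinate projection, $x_{(B,x)} = x$.

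First I would verify that $A$ is an admissible directed set. Directedness is where the filter-base hypothesis enters: given $(B_1,x_1),(B_2,x_2)\in A$, choose $B_3 \in \mathcal{B}$ with $B_3 \subseteq B_1 \cap B_2$, and since filter-base members are nonempty, pick any $x_3 \in B_3$; then $(B_3,x_3)$ dominates both. Admissibility is pure bookkeeping in the superstructure: each $B \in \mathcal{B}$ lies in $\mathcal{P}(X) \subseteq V_2(X)$ and each $x$ lies in $X = V_1(X)$, so the Kuratowski code of the pair $(B,x)$ sits a few levels higher and $A$ belongs to some $V_n(X)$, making the net admissible.

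The heart of the argument is the computation of the tail sets. I would fix $\alpha_0 = (B_0,x_0) \in A$ and observe that the corresponding tail set is $\{x_\alpha : \alpha \ge \alpha_0\} = \bigcup\{B \in \mathcal{B} : B \subseteq B_0\}$. Since $B_0$ itself appears in this union, the union contains $B_0$; since every set in the union is a subset of $B_0$, the union is contained in $B_0$. Hence the tail set at $(B_0,x_0)$ is exactly $B_0$. As $(B_0,x_0)$ ranges over $A$, its first coordinate ranges over all of $\mathcal{B}$, so the collection $\langle x_\alpha\rangle$ of tail sets equals $\mathcal{B}$ precisely; that is, $\mathcal{B}$ is the tail filter base of this admissible net.

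The only point demanding care is ensuring the tail set collapses to exactly $B_0$ rather than to a larger or smaller set. This is what forces both design choices: ordering by reverse inclusion of the \emph{first} coordinate, and letting the \emph{second} coordinate range over all of each $B$. Beyond this, the sole thing to confirm is that the superstructure level of $A$ is finite, which is immediate once the encoding of ordered pairs is unwound, so I anticipate no genuine obstacle.
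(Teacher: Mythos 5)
Your construction is precisely the paper's own: the index set $\Lambda=\bigl\{(B,x)\mid x\in B\in\mathcal B\bigr\}$ pre-ordered by reverse inclusion on the first coordinate, with the net given by the second-coordinate projection. Your tail-set computation (showing the tail at $(B_0,x_0)$ collapses to exactly $B_0$) and the superstructure bookkeeping correctly supply the details the paper dismisses as ``easy to see,'' so the proof is correct and takes the same approach.
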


\begin{proof}
  Let $\mathcal B$ be a filter base on~$X$. Let $\Lambda$ be the
  subset of $\mathcal B\times X$ defined by $(A,x)\in\Lambda$ whenever
  $x\in A$:
  \begin{displaymath}
    \Lambda=\bigl\{(A,x)\mid x\in A\in\mathcal B\bigr\}.
  \end{displaymath}
  A pre-order can be defined on $\Lambda$ via $(A,x)\le(B,y)$ when $B\subseteq
  A$; this makes $\Lambda$ a directed set. Define a net
  indexed by $\Lambda$ via $x_{(A,x)}=x$. It is easy to see that this
  net is admissible and its tail filter base equals~$\mathcal B$.
\end{proof}

We are now ready to prove Theorem~\ref{admis}:

\begin{proof}[Proof of Theorem~\ref{admis}.]
  Let $(x_\alpha)$ be a net in~$X$. By Proposition~\ref{tfb-net},
  there is an admissible net whose tail filter base is
  $\langle x_\alpha\rangle$, hence this new net is strongly tail
  equivalent to $(x_\alpha)$.
\end{proof}

Proposition~\ref{tfb-net} implies that there is a natural bijection
between $\mathfrak N(X)/\!\!\sim$ and $\mathfrak F(X)$. Indeed, it
follows from Proposition~\ref{tfb-net} that the map induced by
associating every net in $\mathfrak N(X)$ to its tail filter is a map
from $\mathfrak N(X)$ onto $\mathfrak F(X)$. By the definition of tail
equivalence, this map induces a bijection from
$\mathfrak N(X)/\!\!\sim$ to $\mathfrak F(X)$.

The next two theorems show that the bijection between
$\mathfrak N(X)/\!\!\sim$ and $\mathfrak F(X)$ induces a bijection
between the set of all net convergence structures and the set of all
filter convergence structures on~$X$. This will help us show that
Section~\ref{sec:ncs} and Section~\ref{sec:fcs} are exact translations
of each other.

\begin{theorem}\label{ncs-fcs}
  Let $X$ be a net convergence space. Then $X$ is also a filter convergence
  space under the filter convergence structure given by $\mathcal F\to
  x$ whenever $\mathcal F$ is the tail filter of some net converging
  to~$x$.
\end{theorem}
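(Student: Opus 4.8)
The plan is to verify the three filter convergence axioms (F1)--(F3) for the proposed structure, using the correspondence between nets and filters established earlier in this section. The key translation tools are: (i) the constant net at $x$ has tail filter $[x]$; (ii) if $(y_\beta)$ is a quasi-subnet of $(x_\alpha)$ then $[x_\alpha]\subseteq[y_\beta]$ (equivalently, passing to a quasi-subnet corresponds to enlarging the tail filter); and (iii) for two nets on a common index set $\Lambda\times\Lambda'$ (or any common directed set, via Remark~\ref{same-ind}), interleaving the terms as in (N3) produces a net whose tail filter is the intersection of the two tail filters. I would also remark at the outset that $\mathcal F\to x$ is well-defined: by Proposition~\ref{tfb-net} every filter is the tail filter of some admissible net, and by (N2b) all tail-equivalent nets converge together, so the condition depends only on $\mathcal F$ and not on the chosen net.

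First I would check (F1). Given $x\in X$, the constant net $(x)$ converges to $x$ by (N1), and its tail filter is the principal ultrafilter $[x]$; hence $[x]\to x$. Next, for (F2), suppose $\mathcal F\to x$ and $\mathcal F\subseteq\mathcal G$. Pick an admissible net $(x_\alpha)$ converging to $x$ with tail filter $\mathcal F$, and an admissible net $(y_\beta)$ with tail filter $\mathcal G$ (Proposition~\ref{tfb-net}). Since $[x_\alpha]=\mathcal F\subseteq\mathcal G=[y_\beta]$, the characterization $(x_\alpha)\preceq(y_\beta)\iff[y_\beta]\subseteq[x_\alpha]$ shows $(y_\beta)$ is a quasi-subnet of $(x_\alpha)$; then (N2) gives $y_\beta\to x$, so $\mathcal G\to x$.

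The main work is (F3). Suppose $\mathcal F\to x$ and $\mathcal G\to x$, witnessed by admissible nets $(x_\alpha)_{\alpha\in A}$ and $(y_\beta)_{\beta\in B}$ converging to $x$ with $[x_\alpha]=\mathcal F$ and $[y_\beta]=\mathcal G$. Using Remark~\ref{same-ind}, I would replace these by strongly tail equivalent nets $(x'_{(\alpha,\beta)})$ and $(y'_{(\alpha,\beta)})$ on the common directed set $A\times B$; strong tail equivalence preserves tail filters, and (N2b) preserves convergence, so both still converge to $x$ with tail filters $\mathcal F$ and $\mathcal G$. Now I apply (N3): the ``merged'' index set $(A\times B)\times\{1,2\}$, directed by the first coordinate, carries a net $z$ defined by $z_{((\alpha,\beta),1)}=x'_{(\alpha,\beta)}$ and $z_{((\alpha,\beta),2)}=y'_{(\alpha,\beta)}$, so that each term is drawn from $\{x'_{(\alpha,\beta)},y'_{(\alpha,\beta)}\}$; by (N3) this net converges to $x$.

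The crux is then to identify the tail filter of this merged net. A tail set of $z$ above $((\alpha_0,\beta_0),i)$ is exactly the union of a tail set of $(x'_{(\alpha,\beta)})$ and a tail set of $(y'_{(\alpha,\beta)})$, both taken above $(\alpha_0,\beta_0)$; I expect the delicate point to be checking that the filter generated by these unions is precisely $\mathcal F\cap\mathcal G$. The inclusion $[z]\subseteq\mathcal F\cap\mathcal G$ is clear, since each such union is a superset of the corresponding tail set of $\mathcal F$ and of $\mathcal G$. For the reverse inclusion, given $S\in\mathcal F\cap\mathcal G$ I would produce tail sets $T_x\subseteq S$ of $\mathcal F$ and $T_y\subseteq S$ of $\mathcal G$; their union $T_x\cup T_y\subseteq S$ is a tail set of $z$, so $S\in[z]$. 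Having established $[z]=\mathcal F\cap\mathcal G$ and $z\to x$, the definition of the structure yields $\mathcal F\cap\mathcal G\to x$, completing (F3). Finally, I would note that $\mathcal F\cap\mathcal G$ is genuinely a filter (an intersection of two filters is a filter), so the statement of (F3) is meaningful.
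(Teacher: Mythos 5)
Your overall route is the paper's: verify (F1)--(F3), with (F1) and (F2) argued exactly as in the paper (constant nets for (F1); Proposition~\ref{tfb-net} plus the quasi-subnet characterization $[y_\beta]\subseteq[x_\alpha]$ iff $(x_\alpha)\preceq(y_\beta)$ and axiom (N2) for (F2)), and (F3) via the same merged net on $\Lambda\times\{1,2\}$ directed by the first coordinate. There is, however, one formal gap in your (F3): axiom (N3) requires all \emph{three} nets to live on a single index set, but your $z$ is indexed by $\Gamma=(A\times B)\times\{1,2\}$ while $x'$ and $y'$ are indexed by $A\times B$, so (N3) does not apply as you invoke it. The paper patches exactly this point by introducing auxiliary nets $\bar x_{(\alpha,i)}=x_\alpha$ and $\bar y_{(\alpha,i)}=y_\alpha$ on~$\Gamma$, observing they are strongly tail equivalent to the originals (hence still converge to~$x$), and only then applying (N3) to $z$, $\bar x$, $\bar y$ on the common index set~$\Gamma$. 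Since you already used the strong-tail-equivalence device for Remark~\ref{same-ind}, this is a one-line repair, but as written the step is missing. A second, smaller slip: in proving $\mathcal F\cap\mathcal G\subseteq[z]$, your tails $T_x\subseteq S$ and $T_y\subseteq S$ may sit above different indices of $A\times B$, and only a union of two tails above the \emph{same} index is a tail set of~$z$; pass to a common upper bound by directedness before taking the union.

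Beyond these repairs, your argument is sound and in one respect does more than the paper: you establish the equality $[z]=\mathcal F\cap\mathcal G$, whereas the paper proves only the inclusion $[z]\subseteq\mathcal F\cap\mathcal G$ (immediate because $(x_\alpha)$ and $(y_\alpha)$ are quasi-subnets of $z$) and then invokes the just-proved (F2) to upgrade $[z]\to x$ to $\mathcal F\cap\mathcal G\to x$. The paper's route is slightly lighter; yours yields the sharper fact that the merged net's tail filter is exactly the intersection of the two tail filters, which is precisely the two-net case of the mixing lemma the paper proves later in Section~\ref{sec:mix}, so nothing in your stronger claim is wasted.
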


\begin{proof}
  We need to verify (F1)--(F3). (F1) is trivial because, for every
  $x\in X$, $[x]$ is the tail filter of any
  constant net in $X$ with constant value~$x$, and such a net converges to
  $x$ by (N1).

  To show (F2), suppose $\mathcal F\to x$ and $\mathcal
  F\subseteq\mathcal G$. It follows from $\mathcal F\to~x$ that
  $\mathcal F=[x_\alpha]$ for some net $(x_\alpha)$ where
  $x_\alpha\to x$. Proposition~\ref{tfb-net} allows us to find
  a net $(y_\beta)$ in $\mathfrak N(X)$ such that
  $\mathcal G=[y_\beta]$. Now $\mathcal
  F\subseteq\mathcal G$ implies $(y_\beta)\preceq(x_\alpha)$. Therefore,
  $y_\beta\to x$ by (N2), and hence $\mathcal G\to x$.

  Finally, we prove (F3). Assume $\mathcal F\to x$ and $\mathcal G\to
  x$. Then we can find two nets $(x_\alpha)_{\alpha\in\Lambda}$ and
  $(y_\alpha)_{\alpha\in\Lambda}$ in $\mathfrak N(X)$ satisfying
  $\mathcal F=[x_\alpha]$, $\mathcal G=[y_\alpha]$, $x_\alpha\to x$,
  and $y_\alpha\to x$; note that we may assume the two nets
  have the same index set by Remark~\ref{same-ind}. Letting
  $\Gamma=\Lambda\times\{1,2\}$ and defining $(\alpha,i)\le(\beta,j)$
  in $\Gamma$ if $\alpha\le\beta$ in $\Lambda$ makes $\Gamma$
  into a directed set. Define a net $(z_\gamma)_{\gamma\in\Gamma}$ by 
  setting $z_{(\alpha,1)}=x_\alpha$ and $z_{(\alpha,2)}=y_\alpha$. It
  is easy to see that both $(x_\alpha)$ and $(y_\alpha)$ are
  quasi-subnets of $(z_\gamma)$ and, as a consequence,
  $[z_\gamma]\subseteq[x_\alpha]\cap[y_\alpha]=\mathcal F\cap\mathcal
  G$. It now suffices to verify $z_\gamma\to x$ because this would
  imply $[z_\gamma]\to x$ and, therefore,
  $\mathcal F\cap\mathcal G\to x$ by (F2).

  Define two auxiliary nets $(\bar x_\gamma)_{\gamma\in\Gamma}$ and
  $(\bar y_\gamma)_{\gamma\in\Gamma}$ as follows: put
  $\bar x_{(\alpha,i)}=x_\alpha$ and $\bar
  y_{(\alpha,i)}=y_\alpha$. It is easy to see $(\bar x_\gamma)$ and
  $(\bar y_\gamma)$ are strongly tail equivalent to $(x_\alpha)$ and
  $(y_\alpha)$, respectively, so $\bar x_\gamma\to x$ and $\bar
  y_\gamma\to x$. Since $z_{(\alpha,1)}=\bar x_{(\alpha,1)}$ and
  $z_{(\alpha,2)}=\bar y_{(\alpha,2)}$ for every~$\alpha$, (N3) yields
  $z_\gamma\to z$.
\end{proof}

\begin{theorem}\label{fcs-ncs}
  Let $X$ be a filter convergence space. Then it is also a net
  convergence space under the net convergence structure given by
  $x_\alpha\to x$ whenever $[x_\alpha]\to x$.
\end{theorem}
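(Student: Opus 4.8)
The plan is to verify the three net axioms (N1)--(N3) by translating each into the corresponding filter axiom (F1)--(F3) through the tail-filter correspondence established earlier in this section. Two facts from that discussion are the load-bearing ingredients: the tail filter $[x_\alpha]$ depends only on the tail-equivalence class of $(x_\alpha)$, so the prescription ``$x_\alpha\to x$ iff $[x_\alpha]\to x$'' is well-defined and automatically invariant under tail equivalence; and $(y_\beta)$ is a quasi-subnet of $(x_\alpha)$ precisely when $[x_\alpha]\subseteq[y_\beta]$. With these in hand, most of the verification should reduce to reading off the filter axioms.

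First I would dispatch (N1): a constant net with value $x$ has every tail set equal to $\{x\}$, so its tail filter is $[x]$, and $[x]\to x$ by (F1). For (N2), if $x_\alpha\to x$ and $(y_\beta)$ is a quasi-subnet of $(x_\alpha)$, then $[x_\alpha]\subseteq[y_\beta]$; since $[x_\alpha]\to x$, axiom (F2) gives $[y_\beta]\to x$, i.e.\ $y_\beta\to x$. Both of these are immediate once the correspondence is in place.

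The substantive step is (N3), and I expect it to be the only place requiring genuine work. Suppose $(x_\alpha)_{\alpha\in\Lambda}\to x$ and $(y_\alpha)_{\alpha\in\Lambda}\to x$ over a common index set, so $[x_\alpha]\to x$ and $[y_\alpha]\to x$; by (F3), $[x_\alpha]\cap[y_\alpha]\to x$. Given any net $(z_\alpha)_{\alpha\in\Lambda}$ with $z_\alpha\in\{x_\alpha,y_\alpha\}$ for every $\alpha$, the goal is to show $[z_\alpha]\to x$, and for this I would prove the containment $[x_\alpha]\cap[y_\alpha]\subseteq[z_\alpha]$ and then invoke (F2). To establish the containment, take $S\in[x_\alpha]\cap[y_\alpha]$, so $S$ contains some tail $\{x_\alpha\}_{\alpha\ge\alpha_1}$ and some tail $\{y_\alpha\}_{\alpha\ge\alpha_2}$; choosing $\alpha_0\ge\alpha_1,\alpha_2$ yields $S\supseteq\{x_\alpha\}_{\alpha\ge\alpha_0}\cup\{y_\alpha\}_{\alpha\ge\alpha_0}\supseteq\{z_\alpha\}_{\alpha\ge\alpha_0}$, where the last inclusion is exactly the hypothesis $z_\alpha\in\{x_\alpha,y_\alpha\}$. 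Hence $S$ contains a tail of $(z_\alpha)$, so $S\in[z_\alpha]$, giving $[z_\alpha]\to x$ by (F2). The real (and only) obstacle is this bookkeeping with tail sets, which crucially exploits the shared index set $\Lambda$ so that a single $\alpha_0$ dominates both witnesses; everything else is a direct application of the filter axioms.
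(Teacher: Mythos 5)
Your proposal is correct and follows essentially the same route as the paper: (N1) and (N2) are read off from (F1) and (F2) via the correspondences $[x]\leftrightarrow$ constant nets and quasi-subnet $\leftrightarrow$ tail-filter containment, and (N3) is reduced to the containment $[x_\alpha]\cap[y_\alpha]\subseteq[z_\alpha]$ followed by (F3) and (F2), exactly as in the paper's proof. Your tail-set bookkeeping (choosing $\alpha_0$ dominating both witnesses $\alpha_1,\alpha_2$) is the same argument the paper gives, only spelled out slightly more explicitly.
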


\begin{proof}
  It is easy to see that axiom (F1) of the original filter convergence
  implies (N1) for the new net convergence. To verify (N2), suppose
  $x_\alpha\to x$ and $(y_\beta)$ is a quasi-subnet of
  $(x_\alpha)$. Then $[x_\alpha]\to x$ and
  $[x_\alpha]\subseteq[y_\beta]$ implies $[y_\beta]\to x$ by (F2),
  which yields $y_\beta\to x$.

  To show (N3), we assume $x_\alpha\to x$, $y_\alpha\to x$, and let
  $(z_\alpha)$ be such that $z_\alpha\in\{x_\alpha,y_\alpha\}$ for
  every~$\alpha$. Let $A\in[x_\alpha]\cap[y_\alpha]$. It follows that
  $A$ contains a tail of $(x_\alpha)$ and a tail of $(y_\alpha)$. Then
  there exists $\alpha_0$ such that both
  $\{x_\alpha\}_{\alpha\ge\alpha_0}$ and
  $\{y_\alpha\}_{\alpha\ge\alpha_0}$ are subsets of~$A$. This shows
  $\{z_\alpha\}_{\alpha\ge\alpha_0}$ is a subset of~$A$, so
  $A\in[z_\alpha]$. It follows that
    \begin{math}
      [x_\alpha]\cap[y_\alpha]\subseteq[z_\alpha].
    \end{math}
    Since $[x_\alpha]\to x$ and $[y_\alpha]\to x$,
    (F3) implies $[x_\alpha]\cap[y_\alpha]\to x$. Now (F2) yields
    $[z_\alpha]\to x$, which implies $z_\alpha\to x$.
\end{proof}

Theorem~\ref{ncs-fcs} yields a map from net convergence structures on
$X$ to filter convergence structures on~$X$, while
Theorem~\ref{fcs-ncs} yields a map in the opposite direction. Thus,
every net convergence structure gives rise to an associated filter
convergence structure, and every filter convergence structure gives
rise to an associated net convergence structure. It is easy to verify
that these two maps are inverses of each other. Going forward, we will
use the term \term{convergence structure} instead of {\it net}
convergence structure or {\it filter} convergence
structure. Similarly, we will now talk about \term{convergence spaces}
and \term{convergence vector spaces}. We will see that the net and
filter languages each have their own advantages and disadvantages, so
one should use the language that is most convenient in a given
situation.

In~\cite{Ho:10}, the authors consider a definition of net
convergence structure consisting of just two axioms that are
analogous to our axioms (N1) and (N2). However, their definition does
not include (N3). They observe that their definition is implied by
filter convergence structures and ask whether the two approaches are
equivalent. We now know that they are not: (N3) is needed to ensure
the equivalence: Example~\ref{N1N2notN3} shows that (N1) and (N2) are
not sufficient.

\section{Equivalence of net and filter terminologies}
\label{sec:terms}

In Sections~\ref{sec:ncs} and~\ref{sec:fcs} we introduced various
terms in the settings of net and filter convergence structures.  We
will now show that translating between the net and filter languages
preserves the meaning of all these terms. For example, a set in a net
convergence vector space $X$ is bounded iff it is bounded in the
associated filter convergence structure on~$X$.

Throughout this section we take $X$ to be a convergence space. In
order to distinguish between properties, we will use prefixes ``net''
and ``filter''. For example, we write ``net-bounded'' versus
``filter-bounded'' to distinguish boundedness in the two settings. As
before, ``net'' will usually mean ``admissible net'' when quantifying
over all nets.

Most of these results are easy exercises, but we present their proofs
for the convenience of the reader.

\smallskip

\emph{Hausdorff:} Suppose $X$ is filter-Hausdorff, and assume
$x_\alpha\to x$ and $x_\alpha\to y$. Then $[x_\alpha]\to x$ and
$[x_\alpha]\to y$ implies $x=y$; hence, $X$ is
net-Hausdorff. Conversely, if $X$ is net-Hausdorff and
$\mathcal F\to x$ and $\mathcal F\to y$, find a net $(x_\alpha)$ such
that $\mathcal F=[x_\alpha]$. It follows that $x_\alpha\to x$ and
$x_\alpha\to y$, so $x=y$, confirming that $X$ is filter-Hausdorff.

\smallskip

\emph{Continuous functions:} Let $f\colon X\to Y$ be a function
between two convergence spaces. It is easy to see
$f\bigl([x_\alpha]\bigr)=\bigl[f(x_\alpha)\bigr]$ for every net
$(x_\alpha)$ in~$X$. If $x_\alpha\to x$ in $X$ and $f$ is filter-continuous, then $[x_\alpha]\to x$ and
\begin{math}
  \bigl[f(x_\alpha)\bigr]=f\bigl([x_\alpha]\bigr)\to f(x).
\end{math}
These imply $f(x_\alpha)\to f(x)$ and, consequently, that $f$ is net
continuous. Suppose now that $f$ is net continuous and
$\mathcal F\to x$ in~$X$.
Find a net $(x_\alpha)$ in $X$
such that $x_\alpha\to x$ and $\mathcal F=[x_\alpha]$. It follows that
$f(x_\alpha)\to f(x)$, so
\begin{math}
  f(\mathcal F)=\bigl[f(x_\alpha)\bigr]\to f(x),
\end{math}
and $f$ is filter-continuous.

All the properties that were defined using continuous functions are
then the same in net and filter structures. These include
\emph{subspaces}, \emph{product spaces}, and \emph{convergence vector spaces}.

\smallskip

\emph{Closure:} Let $x$ belong to the net-closure of~$A$. Then there
is a net $(x_\alpha)$ in $A$ such that $x_\alpha\to x$. Then
$[x_\alpha]\to x$. It follows from $A\in[x_\alpha]$ that $x$ is in the
filter-closure of~$A$. Conversely, suppose that $x$ is in the
filter-closure of~$A$. Then $A\in\mathcal F\to x$ for some
filter~$\mathcal F$. It follows that there is a net $(x_\alpha)$ such
that $\mathcal F=[x_\alpha]$ and $x_\alpha\to x$. Then
$A\in[x_\alpha]$ implies $A$ contains a tail of $(x_\alpha)$. Since this
tail still converges to~$x$, we conclude that $x$ is in the
net-closure of~$A$.

It now follows that the concepts of \emph{closed}, \emph{open}, and
\emph{dense} sets, as well as \emph{topological modification}, agree
in net and filter structures.

\smallskip

\emph{Neighborhoods:} we leave it as an exercise that
net-neighborhoods agree with filter-neighborhoods.

\smallskip

\emph{Compact sets:} If $X$ is net-compact, let $\mathcal U$ be an
ultrafilter on~$X$. Proposition~\ref{tfb-net} yields a net
$(x_\alpha)$ in $X$ with $\mathcal U=[x_\alpha]$. Use the
net-compactness of $X$ to find a quasi-subnet $(y_\beta)$ of
$(x_\alpha)$ such that $y_\beta\to x$ for some~$x$. It follows that
$\mathcal U=[x_\alpha]\subseteq[y_\beta]\to x$. Maximality of
$\mathcal U$ then implies $\mathcal U=[y_\beta]\to x$, so $X$ is
filter-compact. Conversely, suppose that $X$ is filter-compact. Let
$(x_\alpha)$ be a net in~$X$. Then $[x_\alpha]$ is a filter, so it is
contained in some ultrafilter $\mathcal U$. Then $\mathcal U\to x$ for
some~$x$. Now find a net $(y_\beta)$ with $\mathcal U=[y_\beta]$. We
clearly have $y_\beta\to x$, and it follows from
$[x_\alpha]\subseteq[y_\beta]$ that $(y_\beta)$ is a quasi-subnet of
$(x_\alpha)$. Therefore, $X$ is net-compact.

It is straightforward to verify that the concepts of \emph{local
  compactness} agree for nets and filters.

\smallskip

\emph{Bounded sets:} Let $X$ be a convergence vector space and
$A\subseteq X$. Suppose that $A$ is net-bounded. Since
$\mathcal N_0\to 0$ in~$\mathbb K$, we can find a net $(r_\alpha)$ in
$\mathbb K$ such that $[r_\alpha]=\mathcal N_0$ and $r_\alpha\to
0$. It follows that the net $(r_\alpha a)_{(\alpha,a)}$ converges to
zero in~$X$. It is easy to verify
$\mathcal N_0A=\langle r_\alpha a\rangle$, so
$[\mathcal N_0A]=[r_\alpha a]\to 0$. This means that $A$ is
filter-bounded. Conversely, suppose $A$ is filter-bounded and let
$r_\alpha\to 0$ in~$\mathbb K$. Then $[r_\alpha]\to 0$ means
$\mathcal N_0\subseteq[r_\alpha]$. It follows that every
$U\in\mathcal N_0$ contains a tail of $(r_\alpha)$ and, therefore,
$UA$ contains a tail of $(r_\alpha a)$. This gives
$\mathcal N_0A\subseteq\langle r_\alpha a\rangle$, so
$[\mathcal N_0A]\subseteq[r_\alpha a]$. Now $[\mathcal N_0A]\to 0$
implies $[r_\alpha a]\to 0$, hence $r_\alpha a\to 0$, showing that $A$
is net-bounded.

It is an easy exercise that a convergence vector space is
\emph{net-locally bounded} iff it is \emph{filter-locally bounded}.

\smallskip

\emph{Mackey modification:} Consider a convergence vector space
$(X,\to)$. We claim that its net-Mackey modification agrees with its
filter-Mackey modification in the sense that $x_\alpha\goesm x$ iff
$[x_\alpha]\goesm x$. Since both the original structure and its Mackey
modification are linear, we may assume that $x=0$.

Suppose that $x_\alpha\goesm 0$. Then there exists a bounded set $B$
such that for every $\varepsilon>0$ the set $\varepsilon B$ contains a
tail of $(x_\alpha)$. It follows that $\varepsilon B\in[x_\alpha]$.
Since $\varepsilon$ was arbitrary, we conclude that
$\mathcal N_0B\subseteq[x_\alpha]$ and, therefore,
$[x_\alpha]\goesm 0$.  For the converse we assume
$[x_\alpha]\goesm 0$. Find a bounded set $B$ such that
$\mathcal N_0B\subseteq[x_\alpha]$. If $\varepsilon>0$ is arbitrary,
$\varepsilon B\in\mathcal N_0 B$, and hence $\varepsilon B$ contains a
tail of $(x_\alpha)$. This yields $x_\alpha\goesm 0$.

In view of this equivalence, Proposition~\ref{Mack-bdd} implies
Proposition~3.7.17 in~\cite{Beattie:02}.

\subsection*{First countability.} The rest of this section is devoted
to translations of the first countability condition into the
language of nets.

\begin{proposition}
  A convergence space is first countable at $x$ iff for every net 
  $(x_\alpha)_{\alpha\in\Lambda}$ which converges to $x$ there exists
  a net $(y_\gamma)_{\gamma\in\Gamma}$ such that
  $(x_\alpha)\preceq(y_\gamma)$, $y_\gamma\to x$, and $\Gamma$ admits
  a countable co-final subset. 
\end{proposition}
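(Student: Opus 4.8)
The plan is to prove the equivalence by translating the first-countability condition into the filter language via the bijection between $\mathfrak{N}(X)/\!\!\sim$ and $\mathfrak{F}(X)$, using the dictionary relations $(x_\alpha)\preceq(y_\gamma)$ iff $[y_\gamma]\subseteq[x_\alpha]$. The key observation to establish first is that a filter $\mathcal{G}$ has a countable base if and only if it is the tail filter $[y_\gamma]$ of some net $(y_\gamma)_{\gamma\in\Gamma}$ whose index set $\Gamma$ admits a countable co-final subset. For the forward direction of this observation, if $\mathcal{G}$ has a countable base $\{B_n\}_{n\in\mathbb{N}}$, I would replace it by the decreasing countable base $\{B_1\cap\cdots\cap B_n\}$ and run the construction of Proposition~\ref{tfb-net} to obtain an admissible net $(y_\gamma)_{\gamma\in\Gamma}$ with tail filter base exactly this decreasing sequence; the points indexed over the $n$-th set form a co-final subset, and because the base is decreasing these combine into a countable co-final subset of $\Gamma$. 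For the converse, if $\Gamma$ has a countable co-final subset $\{\gamma_n\}$, then the tail sets $\{y_\gamma\}_{\gamma\ge\gamma_n}$ form a countable base for $[y_\gamma]$.

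With this lemma in hand, the proposition is a direct translation. Suppose $X$ is first countable at $x$, and let $(x_\alpha)_{\alpha\in\Lambda}$ be a net converging to $x$. Then $[x_\alpha]\to x$, so by the filter definition of first countability there is a filter $\mathcal{G}$ with a countable base such that $\mathcal{G}\subseteq[x_\alpha]$ and $\mathcal{G}\to x$. Using Proposition~\ref{tfb-net} and the observation above, realize $\mathcal{G}$ as the tail filter $[y_\gamma]$ of a net $(y_\gamma)_{\gamma\in\Gamma}$ whose index set has a countable co-final subset. The inclusion $[y_\gamma]=\mathcal{G}\subseteq[x_\alpha]$ gives $(x_\alpha)\preceq(y_\gamma)$, and $\mathcal{G}\to x$ gives $y_\gamma\to x$, which is exactly what is required.

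Conversely, assume the net condition holds and suppose $\mathcal{F}\to x$. Find a net $(x_\alpha)$ with $\mathcal{F}=[x_\alpha]$ and $x_\alpha\to x$. By hypothesis there is $(y_\gamma)_{\gamma\in\Gamma}$ with $(x_\alpha)\preceq(y_\gamma)$, $y_\gamma\to x$, and $\Gamma$ having a countable co-final subset. Set $\mathcal{G}=[y_\gamma]$; then $\mathcal{G}$ has a countable base, $\mathcal{G}\subseteq\mathcal{F}$ follows from $(x_\alpha)\preceq(y_\gamma)$, and $\mathcal{G}\to x$ follows from $y_\gamma\to x$. This verifies filter first countability at $x$.

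The main obstacle I anticipate is the careful bookkeeping in the lemma relating countable bases to countable co-final subsets, specifically ensuring that the co-final subset can be taken countable when the base is passed through the Proposition~\ref{tfb-net} construction. The subtlety is that the index set $\Lambda=\{(A,x)\mid x\in A\in\mathcal{B}\}$ can be uncountable even when $\mathcal{B}$ is countable, so one cannot use all of $\Lambda$; instead I would exhibit a specific countable co-final subset by choosing, for each set $B_n$ in a decreasing countable base, a single index $(B_n,x_n)$ with $x_n\in B_n$, and verify that the decreasing nesting $B_{n+1}\subseteq B_n$ makes $\{(B_n,x_n)\}_{n\in\mathbb{N}}$ co-final in $\Lambda$. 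Everything else is a routine application of the established dictionary.
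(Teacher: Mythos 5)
Your proof is correct and takes essentially the same route as the paper: both directions pass through the tail-filter dictionary, realize the countable base (first made nested and decreasing) as the tail filter base of a net via the construction of Proposition~\ref{tfb-net}, and extract the countable co-final subset by picking one index per base set, while the converse reads off the countable base as the tail sets over the co-final sequence. Your concluding worry is handled exactly as in the paper, where the co-final subset is $\Gamma_0=\bigl\{(n,b_n)\mid n\in\mathbb N\bigr\}$ with one point $b_n$ chosen from each $B_n$.
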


\begin{proof}
  Suppose that $X$ is first countable and let
  $(x_\alpha)_{\alpha\in\Lambda}\to x$.  Find a filter
  $\mathcal G\subseteq[x_\alpha]$ such that $\mathcal G\to x$ and
  $\mathcal G$ has a countable base $\{B_n\}$. Replacing
  $B_n$ with $B_1\cap\dots\cap B_n$, we may assume
  that $(B_n)$ is nested and decreasing. Define
  $(y_\gamma)_{\gamma\in\Gamma}$ as in the proof of
  Proposition~\ref{tfb-net} by putting
  \begin{math}
    \Gamma=\bigl\{(n,b)\mid n\in\mathbb N,\ b\in B_n\bigr\},
  \end{math}
  pre-ordering it by the first component, and letting $y_{(n,b)}=b$. Then
  $\langle y_\gamma\rangle_{\gamma\in\Gamma}=\{B_n\}_{n\in\mathbb
    N}$ and $[y_\gamma]=\mathcal G$. It follows that
    $(x_\alpha)\preceq(y_\gamma)$ and $y_\gamma\to x$. For every
    $n\in\mathbb N$, pick any $b_n\in B_n$; the set
  \begin{math}
    \Gamma_0=\bigl\{(n,b_n)\mid n\in\mathbb N\bigr\}
  \end{math}
  is a countable co-final subset of~$\Gamma$.
  
  To prove the converse, suppose that $\mathcal F\to x$. Find
  $(x_\alpha)_{\alpha\in\Lambda}$ such that $\mathcal
  F=[x_\alpha]$. Then there exists a net $(y_\gamma)_{\gamma\in\Gamma}$ such that
  $(x_\alpha)\preceq(y_\gamma)$, $y_\gamma\to x$, and $\Gamma$ admits
  a countable co-final subset $\{\gamma_n\}$. Let $\mathcal
  G=[y_\gamma]$ and, for every~$n$, put
  $B_n=\{y_\gamma\}_{\gamma\ge\gamma_n}$. It follows that $\mathcal
  G\to x$, $\mathcal G\subseteq\mathcal F$, and $\{B_n\}$ is a
  countable base of~$\mathcal G$.
\end{proof}

\begin{proposition}\label{fcount-forw}
  Let $X$ be a convergence space. If $X$ is first countable at $x$ and
  $(x_\alpha)_{\alpha\in\Lambda}\to x$ then there exists an
  increasing sequence of indices $(\alpha_n)$ in $\Lambda$ such that
  $x_{\alpha_n}\to x$. Moreover, if $\beta_n\ge\alpha_n$ for every $n$
  then $x_{\beta_n}\to x$.
\end{proposition}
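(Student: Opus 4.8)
The plan is to pass to the filter language, where first countability directly supplies a countable base, and then diagonalize against that base using the directedness of $\Lambda$. First I would translate the hypothesis: since $(x_\alpha)_{\alpha\in\Lambda}\to x$ we have $[x_\alpha]\to x$, and first countability at $x$ yields a filter $\mathcal G$ with a countable base $\{G_n\}$ satisfying $\mathcal G\subseteq[x_\alpha]$ and $\mathcal G\to x$. Replacing $G_n$ by $G_1\cap\dots\cap G_n$, which still lies in $\mathcal G$ because filters are closed under finite intersections, I may assume $G_1\supseteq G_2\supseteq\cdots$. Since each $G_n\in\mathcal G\subseteq[x_\alpha]$, every $G_n$ contains a tail of $(x_\alpha)$: there is $\delta_n\in\Lambda$ with $\{x_\alpha\}_{\alpha\ge\delta_n}\subseteq G_n$.

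Next I would construct the sequence. Using that $\Lambda$ is directed, choose $\alpha_1\ge\delta_1$ and inductively pick $\alpha_{n+1}\in\Lambda$ with $\alpha_{n+1}\ge\alpha_n$ and $\alpha_{n+1}\ge\delta_{n+1}$; this makes $(\alpha_n)$ increasing. To see $x_{\alpha_n}\to x$, I claim $\mathcal G\subseteq[x_{\alpha_n}]$. Indeed, for $k\ge n$ the chain $\alpha_k\ge\alpha_n\ge\delta_n$ gives $x_{\alpha_k}\in\{x_\alpha\}_{\alpha\ge\delta_n}\subseteq G_n$, so the tail set $\{x_{\alpha_k}\}_{k\ge n}$ is contained in $G_n$ and hence $G_n\in[x_{\alpha_n}]$; as $\{G_n\}$ generates $\mathcal G$, this yields $\mathcal G\subseteq[x_{\alpha_n}]$. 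Since $\mathcal G\to x$, axiom (F2) gives $[x_{\alpha_n}]\to x$, which is exactly $x_{\alpha_n}\to x$.

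For the ``moreover'' statement, suppose $\beta_n\ge\alpha_n$ for every $n$. The point here is that for $k\ge n$ we have $\beta_k\ge\alpha_k\ge\delta_k$, whence $x_{\beta_k}\in G_k\subseteq G_n$, so $\{x_{\beta_k}\}_{k\ge n}\subseteq G_n$; exactly as before this gives $\mathcal G\subseteq[x_{\beta_n}]$ and then $x_{\beta_n}\to x$ by (F2). The main obstacle is precisely this last step: the reindexed family $(\beta_n)$ is not assumed increasing, so I cannot run the simple chain $\beta_k\ge\beta_n$; the decreasing normalization $G_k\subseteq G_n$ is what lets me still conclude $x_{\beta_k}\in G_n$ for all $k\ge n$. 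Securing that normalization at the outset is therefore the crux, and everything else is routine bookkeeping with tail sets and (F2).
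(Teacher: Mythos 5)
Your proof is correct and follows essentially the same route as the paper's: translate to the tail filter $[x_\alpha]$, extract a nested decreasing countable base from the filter $\mathcal G$ given by first countability, inductively choose increasing indices whose tails land in the base sets, and conclude via $\mathcal G\subseteq[x_{\alpha_n}]$ and (F2). Your handling of the ``moreover'' part via $\beta_k\ge\alpha_k\ge\delta_k$ and the nesting $G_k\subseteq G_n$ is a trivial variant of the paper's argument and is equally valid.
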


\begin{proof}
  By first countability, there exists a filter
  $\mathcal G\subseteq[x_\alpha]$ such that $\mathcal G\to x$ and
  $\mathcal G$ has a countable base, say, $\{B_n\}$. Again, we may
  assume that $(B_n)$ is nested and decreasing. Inductively, we find an
  increasing sequence $(\alpha_n)$ in $\Lambda$ such that
  $\{x_\alpha\}_{\alpha\ge\alpha_n}\subseteq B_n$ for every~$n$. It
  follows that $\{x_{\alpha_k}\}_{k\ge n}$ is contained in $B_n$ and,
  therefore, $B_n$ is in the tail filter of the sequence
  $(x_{\alpha_n})$. This yields $\mathcal G\subseteq[x_{\alpha_n}]$,
  so that $[x_{\alpha_n}]\to x$; that is, $x_{\alpha_n}\to x$.

  Similarly, if $\beta_n\ge\alpha_n$ for every $n$ then
  $\{x_{\beta_k}\}_{k\ge n}$ is contained in $B_n$, hence $\mathcal
  G\subseteq[x_{\beta_n}]$ and $x_{\beta_n}\to x$.
\end{proof}

This immediately yields that the closure of a set in a first countable
convergence space agrees with its sequential closure
(Proposition~1.6.4 in~\cite{Beattie:02}).

\begin{definition}
  Let $(x_\alpha)_{\alpha\in\Lambda}$ be a net and $(\alpha_n)$ an
  increasing sequence of indices in~$\Lambda$. Let
  \begin{displaymath}
  \Gamma=\{(n,\alpha)\mid n\in\mathbb N,\alpha\in\Lambda,\ \alpha\ge\alpha_n\}
  \end{displaymath}
  be pre-ordered by the first component. The net
  $y_{(n,\alpha)}=x_\alpha$ indexed by $\Gamma$ is called a
  \term{matryoshka} of $(x_\alpha)$.
\end{definition}

\begin{remark}\label{fold-tails}
  In the preceding definition, let
  $\gamma_0=(n_0,\alpha_0)\in\Gamma$. It can be easily verified that
  $\{y_\gamma\}_{\gamma\ge\gamma_0}$ equals
  $\{x_\alpha\}_{\alpha\ge\alpha_{n_0}}$. That is, the tail sets of a
  matryoshka are precisely the tail sets of the original net
  corresponding to the $(\alpha_n)$.
 \end{remark}

\begin{proposition}
  A convergence space is first countable at $x$ iff every net
  $(x_\alpha)$ that converges to $x$ admits a matryoshka that
  converges to~$x$.
\end{proposition}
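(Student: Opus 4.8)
The plan is to prove this characterization by relating matryoshkas to the first countability condition established in the preceding two propositions. The key observation, recorded in Remark~\ref{fold-tails}, is that a matryoshka of $(x_\alpha)$ associated to an increasing sequence of indices $(\alpha_n)$ has tail sets precisely $\{x_\alpha\}_{\alpha\ge\alpha_{n}}$ for $n\in\mathbb N$. This means its tail filter base is countable, so a matryoshka is essentially the net-theoretic incarnation of a filter with a countable base sitting inside $[x_\alpha]$.

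For the forward direction, I would assume $X$ is first countable at $x$ and let $(x_\alpha)_{\alpha\in\Lambda}\to x$. By Proposition~\ref{fcount-forw}, there is an increasing sequence of indices $(\alpha_n)$ with $x_{\alpha_n}\to x$. I would then form the matryoshka $(y_\gamma)_{\gamma\in\Gamma}$ associated to $(\alpha_n)$. By Remark~\ref{fold-tails}, its tail filter has countable base $\{\{x_\alpha\}_{\alpha\ge\alpha_n}\}_{n\in\mathbb N}$. To see the matryoshka converges to $x$, I would check that the sequence $(x_{\alpha_n})$ is a quasi-subnet of the matryoshka: indeed, $\{x_{\alpha_k}\}_{k\ge n}\subseteq\{x_\alpha\}_{\alpha\ge\alpha_n}$, so every tail of the matryoshka contains a tail of $(x_{\alpha_n})$. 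Hence $[x_{\alpha_n}]\subseteq[y_\gamma]$, and since $x_{\alpha_n}\to x$ gives $[x_{\alpha_n}]\to x$, axiom (F2) yields $[y_\gamma]\to x$, so $y_\gamma\to x$.

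For the converse, I would suppose every net converging to $x$ admits a convergent matryoshka and verify first countability at $x$ directly from the net characterization in the first proposition of this subsection. Given $(x_\alpha)\to x$, take a convergent matryoshka $(y_\gamma)_{\gamma\in\Gamma}$. By Remark~\ref{fold-tails} again, $(x_\alpha)\preceq(y_\gamma)$ since every tail of $(x_\alpha)$ is itself a tail of the matryoshka, hence trivially contains one. Moreover, $\Gamma$ admits a countable co-final subset: picking any $b_n$ with $b_n\ge\alpha_n$ in $\Lambda$, the set $\{(n,b_n)\mid n\in\mathbb N\}$ is co-final in $\Gamma$ by construction of the matryoshka index set. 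This is exactly the condition in that earlier proposition, so $X$ is first countable at $x$.

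The main obstacle is bookkeeping the index sets carefully rather than any deep idea: I must confirm the quasi-subnet relations in both directions using Remark~\ref{fold-tails}, and in the converse verify co-finality of the countable subset of $\Gamma$. The one subtlety worth stating explicitly is that $(x_\alpha)\preceq(y_\gamma)$ holds because the tail sets of the matryoshka are a \emph{subfamily} of those of $(x_\alpha)$, so every tail of $(x_\alpha)$ contains (in fact equals some enlargement of) a tail of the matryoshka; I would state this relation precisely to avoid confusing the direction of the $\preceq$ ordering.
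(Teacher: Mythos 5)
Your converse direction is essentially sound (and, incidentally, takes a slightly different route from the paper, which verifies the filter definition directly by setting $\mathcal G=[y_\gamma]$ rather than invoking the earlier net characterization), but your forward direction contains a genuine error: the tail-filter inclusion is reversed. If the sequence $(x_{\alpha_n})$ is a quasi-subnet of the matryoshka $(y_\gamma)$, then every tail of $(y_\gamma)$ contains a tail of $(x_{\alpha_n})$ — which you correctly state — but this gives $[y_\gamma]\subseteq[x_{\alpha_n}]$, not $[x_{\alpha_n}]\subseteq[y_\gamma]$: any superset of a tail of the matryoshka is automatically a superset of a tail of the sequence. Axiom (F2) passes convergence only upward to \emph{finer} filters, and here the matryoshka's tail filter is the \emph{coarser} one; equivalently, (N2) passes convergence from a net to its quasi-subnets, i.e.\ from the matryoshka to the sequence, which is exactly the opposite of what you need. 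Nor is this a mere bookkeeping slip that a rearrangement would cure: convergence of the selected sequence $(x_{\alpha_n})$ genuinely does not imply convergence of the matryoshka over $(\alpha_n)$, even under first countability. The sequence may converge ``for the wrong reason'' — via a countably based convergent filter built from the sequence's own tails $\{x_{\alpha_k}\}_{k\ge n}$ — while the matryoshka's tails $\{x_\alpha\}_{\alpha\ge\alpha_n}$ are much larger sets (for instance when $\Lambda$ is uncountable and $(\alpha_n)$ is far from cofinal), and no convergent filter need sit inside the filter they generate. One can build a first countable (even pretopological) structure witnessing this by declaring convergent at $x$ exactly the filters containing $\mathcal G_1\cap\mathcal G_2$, where $\mathcal G_1$ is the tail filter of a fixed non-cofinal sequence inside an $\omega_1$-indexed net and $\mathcal G_2$ is generated by countably many full tails of that net.

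The repair is to use the \emph{proof}, not merely the statement, of Proposition~\ref{fcount-forw}: take a convergent filter $\mathcal G\subseteq[x_\alpha]$ with a nested decreasing countable base $\{B_n\}$, and choose the increasing sequence $(\alpha_n)$ so that $\{x_\alpha\}_{\alpha\ge\alpha_n}\subseteq B_n$ for every~$n$. By Remark~\ref{fold-tails} each $B_n$ then contains a tail of the matryoshka, so $\mathcal G\subseteq[y_\gamma]$, and now (F2) applies in the legitimate direction to give $[y_\gamma]\to x$; this is precisely the paper's argument. Two smaller points on your converse: the relation $(x_\alpha)\preceq(y_\gamma)$ is correct, but your stated justification (``every tail of $(x_\alpha)$ contains a tail of the matryoshka'') is again backwards, and is in fact false unless $(\alpha_n)$ is cofinal in~$\Lambda$; the right reason is that every tail of the matryoshka \emph{is} a tail set of $(x_\alpha)$ by Remark~\ref{fold-tails}, hence trivially contains one. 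Your cofinality argument for the countable subset of $\Gamma$ is fine, since $\Gamma$ is pre-ordered by the first component only.
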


\begin{proof}
  First we assume $X$ is first countable at $x$ and let
  $x_\alpha\to x$. Take $\mathcal G$, $(\alpha_n)$ and $(B_n)$ be as
  in the proof of Proposition~\ref{fcount-forw}, and let
  $(y_\gamma)_{\gamma\in\Gamma}$ be the matryoshka of $(x_\alpha)$
  corresponding to $(\alpha_n)$. By Remark~\ref{fold-tails}, each
  $B_n$ contains a tail of $(y_\gamma)$, so
  $\mathcal G\subseteq[y_\gamma]$. It follows that $y_\gamma\to x$.

  To prove the converse, suppose that $\mathcal F\to x$. Find a net
  $(x_\alpha)$ such that $\mathcal F=[x_\alpha]$. By assumption, there
  exists an increasing sequence $(\alpha_n)$ in $\Lambda$ such that
  the corresponding matryoshka $(y_\gamma)_{\gamma\in\Gamma}$ converges
  to~$x$. Then defining $\mathcal G=[y_\gamma]$ implies $\mathcal G\to x$. It
  follows from Remark~\ref{fold-tails} that
  $\mathcal G\subseteq\mathcal F$.
\end{proof}

\section{Mixings and (pre)topological structures}
\label{sec:mix}

In this section we present an equivalent reformulation of Axiom (N3)
based on the concept of \emph{mixing}. We then use this concept to
characterize \emph{pretopological} convergence structures in terms of nets.

Let $X$ be a set, and let $(x_\alpha)$, $(y_\alpha)$ and $(z_\alpha)$ be
three nets in $X$ with the same index sets. We say that $(z_\alpha)$
is a \term{braiding} of $(x_\alpha)$ and $(y_\alpha)$ if
$z_\alpha\in\{x_\alpha,y_\alpha\}$ for every~$\alpha$. In this
terminology, Axiom (N3) effectively says that if $x_\alpha\to x$ and
$y_\alpha\to x$ then every braiding of the two nets also converges to~$x$.

Next, we define a closely related concept of \term{mixing}.
Given two nets, $(x_\alpha)_{\alpha\in A}$ and $(y_\beta)_{\beta\in
  B}$, we construct a new net $(u_\gamma)_{\gamma\in\Gamma}$ which, in
a certain sense, combines $(x_\alpha)$ and $(y_\beta)$ in a single
net. Set
\begin{displaymath}
  \Gamma=\bigl\{(\alpha,\beta,z)
  \mid \alpha\in A,\ \beta\in B,\ z\in\{x_\alpha,y_\beta\}\bigr\}
\end{displaymath}
and give it the following pre-order:
\begin{math}
  (\alpha_1,\beta_1,z_1)\le(\alpha_2,\beta_2,z_2)
\end{math}
whenever $\alpha_1\le\alpha_2$ and $\beta_1\le\beta_2$. It is clear
that $\Gamma$ is a directed set. We now put
$u_{(\alpha,\beta,z)}=z$. The resulting net is called the \term{mixing}
of $(x_\alpha)$ and $(y_\beta)$.

The following two lemmas characterize the relationships between
braiding and mixing. 

\begin{lemma}
  Every braiding of $(x_\alpha)_{\alpha\in A}$ and
  $(y_\alpha)_{\alpha\in A}$ is a quasi-subnet of their mixing.
\end{lemma}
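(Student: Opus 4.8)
The plan is to verify the quasi-subnet relation directly from the definition. Writing $(z_\alpha)_{\alpha\in A}$ for an arbitrary braiding and $(u_\gamma)_{\gamma\in\Gamma}$ for the mixing of $(x_\alpha)_{\alpha\in A}$ and $(y_\alpha)_{\alpha\in A}$, I must show that every tail set of $(u_\gamma)$ contains a tail set of $(z_\alpha)$ as a subset. So I would fix an arbitrary index $\gamma_0=(\alpha_0,\beta_0,z_0)\in\Gamma$ and search for a single index $\delta\in A$ with $\{z_\alpha\}_{\alpha\ge\delta}\subseteq\{u_\gamma\}_{\gamma\ge\gamma_0}$.

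The first concrete step is to identify the tail set $\{u_\gamma\}_{\gamma\ge\gamma_0}$ explicitly. By the definition of the pre-order on $\Gamma$, an index $(\alpha,\beta,z)$ lies above $\gamma_0$ exactly when $\alpha\ge\alpha_0$ and $\beta\ge\beta_0$, and its value under $u$ is $z\in\{x_\alpha,y_\beta\}$. Since any $\alpha\ge\alpha_0$ can be paired with $\beta_0$ (giving the admissible triple $(\alpha,\beta_0,x_\alpha)$) and, symmetrically, any $\beta\ge\beta_0$ can be paired with $\alpha_0$, I would record the identity
\begin{displaymath}
  \{u_\gamma\}_{\gamma\ge\gamma_0}
  =\{x_\alpha\mid\alpha\ge\alpha_0\}\cup\{y_\beta\mid\beta\ge\beta_0\}.
\end{displaymath}

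The second step uses directedness of $A$ to choose $\delta\in A$ with $\delta\ge\alpha_0$ and $\delta\ge\beta_0$ simultaneously. Then for any $\alpha\ge\delta$ we have both $\alpha\ge\alpha_0$ and $\alpha\ge\beta_0$, so $x_\alpha$ belongs to the first set in the union above and $y_\alpha$ to the second; since $z_\alpha\in\{x_\alpha,y_\alpha\}$ by the definition of a braiding, this gives $z_\alpha\in\{u_\gamma\}_{\gamma\ge\gamma_0}$. Hence $\{z_\alpha\}_{\alpha\ge\delta}\subseteq\{u_\gamma\}_{\gamma\ge\gamma_0}$, which is precisely the quasi-subnet condition.

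There is no serious obstacle here; the only point requiring care is that the two coordinates $\alpha$ and $\beta$ of an index in $\Gamma$ must \emph{both} be dominated, so one cannot simply take $\delta=\alpha_0$ or $\delta=\beta_0$ but must invoke directedness to obtain a common upper bound. Everything else is a direct unwinding of the definitions.
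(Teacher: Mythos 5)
Your proof is correct, but it takes a genuinely different route from the paper's. The paper argues structurally: it forms the diagonal $\Gamma_0=\{(\alpha,\alpha,z_\alpha)\mid\alpha\in A\}$, checks that this is a directed co-final subset of~$\Gamma$, and notes that $\alpha\mapsto(\alpha,\alpha,z_\alpha)$ is an order isomorphism identifying the braiding with the restricted net $(u_\gamma)_{\gamma\in\Gamma_0}$; the quasi-subnet relation then follows because restriction to a co-final directed subset is always a quasi-subnet (in fact a subnet in the sense of Willard via the inclusion map, so the paper's argument establishes slightly more than the lemma asserts). You instead unwind the definition of quasi-subnet and verify the tail-set inclusion directly, never using the diagonal triples at all. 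Both arguments invoke directedness of $A$ at the same point --- in the paper it is hidden inside the verification that $\Gamma_0$ is co-final, while you use it explicitly to produce $\delta\ge\alpha_0,\beta_0$, and your remark that neither $\delta=\alpha_0$ nor $\delta=\beta_0$ alone suffices is exactly right. Your tail-set identity $\{u_\gamma\}_{\gamma\ge\gamma_0}=\{x_\alpha\}_{\alpha\ge\alpha_0}\cup\{y_\beta\}_{\beta\ge\beta_0}$ is correct (the witnessing triples $(\alpha,\beta_0,x_\alpha)$ and $(\alpha_0,\beta,y_\beta)$ need only reflexivity of the pre-order) and is a worthwhile by-product: it is precisely the two-net case of the paper's later lemma that the tail filter of a mixing is the intersection of the tail filters of its constituents, i.e.\ $[M]=[x_\alpha]\cap[y_\alpha]$. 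In short, the paper's approach buys a stronger structural conclusion and a reusable co-final-subset mechanism, while yours buys elementarity --- no auxiliary directed subset or index-set isomorphism to manage --- together with an explicit description of the mixing's tails that the paper only derives later.
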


\begin{proof}
  Let $(z_\alpha)_{\alpha\in A}$ be a braiding of  $(x_\alpha)_{\alpha\in A}$ and
  $(y_\alpha)_{\alpha\in A}$, and let  $(u_\gamma)_{\gamma\in\Gamma}$ be
  the mixing of  $(x_\alpha)_{\alpha\in A}$ and
  $(y_\alpha)_{\alpha\in A}$. For every $\alpha\in A$, the triple
  $\gamma=(\alpha,\alpha,z_\alpha)$ belongs to $\Gamma$ and
  $u_\gamma=z_\alpha$. Put
  \begin{displaymath}
    \Gamma_0=\{(\alpha,\alpha,z_\alpha)\mid \alpha\in A\}.
  \end{displaymath}
  It is easy to see that $\Gamma_0$ is a directed subset
  of~$\Gamma$. Moreover, $\Gamma_0$ is co-final in~$\Gamma$, so
  $(u_\gamma)_{\gamma\in\Gamma_0}$ is a quasi-subnet of
  $(u_\gamma)_{\gamma\in\Gamma}$. Since the map
  $\alpha\mapsto(\alpha,\alpha,z_\alpha)$ is an order isomorphism
  between $A$ and~$\Gamma_0$, the nets $(z_\alpha)_{\alpha\in A}$ and
  $(u_\gamma)_{\gamma\in\Gamma_0}$ are the same net up to this
  isomorphism between the index sets.
\end{proof}

\begin{lemma}
  The mixing of two nets is a braiding of a pair of nets that are
  strongly tail equivalent to the original ones.
\end{lemma}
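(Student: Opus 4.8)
The plan is to realize the two required nets as the coordinate projections that are already built into the mixing. Recall that the mixing $(u_\gamma)_{\gamma\in\Gamma}$ of $(x_\alpha)_{\alpha\in A}$ and $(y_\beta)_{\beta\in B}$ is indexed by $\Gamma=\{(\alpha,\beta,z)\mid z\in\{x_\alpha,y_\beta\}\}$ with $u_{(\alpha,\beta,z)}=z$. On this same index set I would define
\[
  x'_{(\alpha,\beta,z)}=x_\alpha
  \qquad\text{and}\qquad
  y'_{(\alpha,\beta,z)}=y_\beta .
\]
Since $z\in\{x_\alpha,y_\beta\}=\{x'_{(\alpha,\beta,z)},y'_{(\alpha,\beta,z)}\}$ holds for every index, the net $(u_\gamma)$ is a braiding of $(x'_\gamma)$ and $(y'_\gamma)$ immediately by construction. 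The real content is to verify that $(x'_\gamma)\approx(x_\alpha)$ and $(y'_\gamma)\approx(y_\beta)$; by symmetry it suffices to treat the first.

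The key step is a tail-set computation. Since the pre-order on $\Gamma$ ignores the third coordinate, the relation $(\alpha,\beta,z)\ge(\alpha_0,\beta_0,z_0)$ reduces to $\alpha\ge\alpha_0$ and $\beta\ge\beta_0$. Fixing $\gamma_0=(\alpha_0,\beta_0,z_0)\in\Gamma$, I would then establish
\[
  \{x'_\gamma\}_{\gamma\ge\gamma_0}=\{x_\alpha\}_{\alpha\ge\alpha_0},
\]
i.e.\ that each tail set of $(x'_\gamma)$ coincides with an honest tail set of $(x_\alpha)$. The inclusion ``$\subseteq$'' is immediate from the definition of $x'$ and the reduction of the order above. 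For ``$\supseteq$'', given $\alpha\ge\alpha_0$ I must produce an index $\gamma\ge\gamma_0$ whose first coordinate is $\alpha$; the triple $(\alpha,\beta_0,x_\alpha)$ works, as $\beta_0\ge\beta_0$ and $x_\alpha\in\{x_\alpha,y_{\beta_0}\}$ is a legitimate third coordinate. Conversely, every tail set $\{x_\alpha\}_{\alpha\ge\alpha_0}$ of the original net is realized on the $(x'_\gamma)$ side by the index $\gamma_0=(\alpha_0,\beta_0,x_{\alpha_0})$ for any fixed $\beta_0\in B$. Hence $\langle x'_\gamma\rangle=\langle x_\alpha\rangle$, which is exactly strong tail equivalence.

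The only delicate point --- and the main (if mild) obstacle --- is to confirm that the parasitic $z$-coordinate never thins out a tail set: one must check that for each relevant $\alpha$ there is always an admissible triple of the form $(\alpha,\beta_0,z)$ lying above $\gamma_0$, which amounts to nothing more than the trivial membership $x_\alpha\in\{x_\alpha,y_{\beta_0}\}$ together with the existence of some $\beta_0\in B$ (so that $\Gamma$ and the comparison index $\gamma_0$ are available). Once the tail-set identity is in place, the definition of $\approx$ gives the conclusion directly, and the symmetric argument applied to $(y'_\gamma)$ and $(y_\beta)$ finishes the proof.
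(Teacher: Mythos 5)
Your proof is correct and takes essentially the same route as the paper: the paper's proof defines exactly the same auxiliary nets $x'_{(\alpha,\beta,z)}=x_\alpha$ and $y'_{(\alpha,\beta,z)}=y_\beta$ on $\Gamma$ and observes the braiding property just as you do. Your explicit tail-set identity $\{x'_\gamma\}_{\gamma\ge\gamma_0}=\{x_\alpha\}_{\alpha\ge\alpha_0}$ merely fills in the detail the paper dismisses with ``it is easy to see,'' so there is nothing to add.
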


\begin{proof}
  Let $(u_\gamma)_{\gamma\in\Gamma}$ be the mixing of
  $(x_\alpha)_{\alpha\in A}$ and $(y_\beta)_{\beta\in B}$. Define two
  nets $(x'_\gamma)_{\gamma\in\Gamma}$ and
  $(y'_\gamma)_{\gamma\in\Gamma}$ as follows: for
  $\gamma=(\alpha,\beta,z)\in\Gamma$, we put $x'_\gamma=x_\alpha$ and
  $y'_\gamma=y_\beta$. By the definition of~$\Gamma$, we have
  \begin{displaymath}
    u_\gamma=z\in\{x_\alpha,y_\beta\}=\{x'_\gamma,y'_\gamma\},
  \end{displaymath}
  hence $(u_\gamma)$ is a braiding of $(x'_\gamma)$ and
  $(y'_\gamma)$. On the other hand, it is easy to see that
  $(x'_\gamma)_{\gamma\in\Gamma}$  and $(y'_\gamma)_{\gamma\in\Gamma}$
  are strongly tail equivalent to $(x_\alpha)_{\alpha\in A}$ and
  $(y_\alpha)_{\beta\in B}$, respectively.
\end{proof}

Combining the two lemmas, we immediately get the following.

\begin{theorem}
  Axiom (N3) in the definition of net convergent structures may be
  replaced with
  \begin{enumerate}
  \item[(N3*)] If two nets converge to $x$ then their mixing converges
    to~$x$.
  \end{enumerate}
\end{theorem}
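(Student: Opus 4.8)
The plan is to deduce the theorem from the two preceding lemmas by showing that, granting (N1) and (N2), the conditions (N3) and (N3*) are equivalent. Only (N2) will really be needed, in the form (N2b) together with the fact that a quasi-subnet of a convergent net converges.

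First I would establish (N3) $\Rightarrow$ (N3*). Assume (N3) and let $(x_\alpha)_{\alpha\in A}$ and $(y_\beta)_{\beta\in B}$ be two nets converging to $x$, with mixing $(u_\gamma)_{\gamma\in\Gamma}$. By the second lemma, $(u_\gamma)$ is a braiding of a pair of nets $(x'_\gamma)_{\gamma\in\Gamma}$ and $(y'_\gamma)_{\gamma\in\Gamma}$ that are strongly tail equivalent to $(x_\alpha)$ and $(y_\beta)$, respectively. Strong tail equivalence implies tail equivalence, so (N2b) gives $x'_\gamma\to x$ and $y'_\gamma\to x$. Since these two nets share the common index set $\Gamma$, Axiom (N3) applies to their braiding and yields $u_\gamma\to x$, which is exactly the assertion of (N3*).

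Conversely, I would establish (N3*) $\Rightarrow$ (N3). Assume (N3*) and let $(x_\alpha)_{\alpha\in A}$ and $(y_\alpha)_{\alpha\in A}$ converge to $x$ on a common index set, and let $(z_\alpha)$ be any braiding of them. By the first lemma, $(z_\alpha)$ is a quasi-subnet of the mixing $(u_\gamma)$ of $(x_\alpha)$ and $(y_\alpha)$. Axiom (N3*) gives $u_\gamma\to x$, and then (N2) transfers this convergence to the quasi-subnet, so $z_\alpha\to x$. This is precisely (N3).

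The argument is entirely routine once the two lemmas are in hand; I expect the only points requiring care to be bookkeeping ones. In particular, one must remember that (N3) is stated for nets on a common index set whereas the mixing is built from two possibly different index sets, and one must invoke (N2)/(N2b) at the right moments to pass convergence across strong tail equivalence and across the quasi-subnet relation. I do not anticipate any genuine obstacle, since the two lemmas were designed precisely to translate between braiding and mixing in both directions.
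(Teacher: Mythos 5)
Your proposal is correct and matches the paper's approach exactly: the paper deduces the theorem by ``combining the two lemmas,'' and your argument is precisely that combination, spelled out with the appropriate invocations of (N2)/(N2b) to pass convergence across strong tail equivalence and the quasi-subnet relation. The bookkeeping points you flag (common index sets for (N3), identifying the braiding with a quasi-subnet via the order isomorphism in the first lemma) are handled exactly as the paper's lemmas intend, so there is no gap.
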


We now extend the definition of mixing to an arbitrary set of
nets. For these purposes, it is useful to introduce some notation.

\begin{definition}\label{mixing}
  Let $\Sigma=\{S_j\mid j\in J\}$ be a set of nets indexed by a
  set~$J$; so $S_j$ denotes a net for each $j\in J$. Note that we do
  not assume any structure on~$J$. Each net in $\Sigma$ is associated
  with a directed pre-ordered index set~$A_j$; i.e.,
  $S_j=(x_{\alpha^{(j)}})_{\alpha^{(j)}\in A_j}$ for each $j\in
  J$.  Define
  \begin{math}
  \Gamma=\Bigl(\prod_{j\in J}A_j\Bigr)\times J
  \end{math}
  and equip it with the product pre-order on the first component: for
  $\gamma_1=(\bar\alpha_1,j_1)$ and $\gamma_2=(\bar\alpha_2,j_2)$, we
  have $\gamma_1\le\gamma_2$ whenever
  $\alpha^{(j)}_1\le\alpha^{(j)}_2$ for all $j\in J$. It is easy to
  see that $\Gamma$ is directed. Now put $\gamma=(\bar\alpha,j)$ and
  define $y_\gamma=x_{\bar{\alpha}(j)}$; this net
  $M=(y_\gamma)_{\gamma\in\Gamma}$ is called the \term{mixing}
  of~$\Sigma$.  When we do not wish to draw attention to the index
  sets of the nets in~$\Sigma$, we will suppress the reference to the
  index set $J$ and just write $S\in \Sigma$ to denote a net from the
  family.
\end{definition}

It is easy to see this mixing net $M$ visits every term of every net
$S$ in~$\Sigma$. Moreover, it visits them in the
same order. Also note the definition of mixing for arbitrary sets of
nets agrees with the mixing of two nets from before.

\begin{lemma}
  Let $\Sigma$ be a family of nets in $X$ and $M$ the mixing
  of~$\Sigma$. Then each of the nets that make up $\Sigma$ is a
  quasi-subnet of~$M$.
\end{lemma}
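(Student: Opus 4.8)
The plan is to unwind the definition of quasi-subnet directly. Fix an index $j\in J$; I want to show $S_j\preceq M$, which by definition means that every tail of the mixing net $M$ contains a tail of $S_j$ as a subset. So I would begin by fixing an arbitrary index $\gamma_0=(\bar\alpha_0,j_0)\in\Gamma$ of $M$ and search for an index $\alpha^{(j)}_0\in A_j$ of $S_j$ whose tail set is contained in $\{y_\gamma\}_{\gamma\ge\gamma_0}$.

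The key observation is that the natural choice is the $j$-th coordinate of the tuple $\bar\alpha_0$: set $\alpha^{(j)}_0=\bar\alpha_0(j)$. Given any $\alpha^{(j)}\ge\alpha^{(j)}_0$ in $A_j$, I would exhibit a single index $\gamma\ge\gamma_0$ of $M$ at which $M$ takes the value $x_{\alpha^{(j)}}$. To do this, define a tuple $\bar\beta\in\prod_{i\in J}A_i$ that agrees with $\bar\alpha_0$ in every coordinate except the $j$-th, where it takes the value $\alpha^{(j)}$, and put $\gamma=(\bar\beta,j)$. Since $\alpha^{(j)}\ge\bar\alpha_0(j)$ while all other coordinates are unchanged (and the pre-order on each $A_i$ is reflexive), we have $\bar\beta\ge\bar\alpha_0$ in the product pre-order; as the order on $\Gamma$ disregards the last component entirely, this gives $\gamma\ge\gamma_0$. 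By the definition of the mixing, $y_\gamma=x_{\bar\beta(j)}=x_{\alpha^{(j)}}$.

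This shows that every term $x_{\alpha^{(j)}}$ with $\alpha^{(j)}\ge\alpha^{(j)}_0$ occurs as a value $y_\gamma$ with $\gamma\ge\gamma_0$, so that $\{x_{\alpha^{(j)}}\}_{\alpha^{(j)}\ge\alpha^{(j)}_0}\subseteq\{y_\gamma\}_{\gamma\ge\gamma_0}$. This is precisely the tail containment required, and since $\gamma_0\in\Gamma$ was arbitrary, we conclude $S_j\preceq M$, i.e.\ $S_j$ is a quasi-subnet of $M$.

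I do not anticipate any serious obstacle; the argument is essentially bookkeeping with the product pre-order. The two points to keep straight are, first, the \emph{direction} of the quasi-subnet relation—one must locate, inside each tail of $M$, a tail of $S_j$, rather than the reverse—and, second, the fact that the $J$-component of $\Gamma$ plays no role in its order, which is exactly what allows us to set the last coordinate of $\gamma$ equal to the chosen $j$ without affecting whether $\gamma\ge\gamma_0$.
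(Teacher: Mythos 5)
Your proof is correct: you state the direction of the quasi-subnet relation accurately, the choice $\alpha^{(j)}_0=\bar\alpha_0(j)$ is the right one, and the verification that $\gamma=(\bar\beta,j)\ge\gamma_0$ exploits exactly the relevant feature of the mixing order, namely that it disregards the $J$-component. Your route is, however, packaged differently from the paper's. The paper argues structurally: it restricts $M$ to the $j_0$-th column $\Gamma_0=\bigl(\prod_{i\in J}A_i\bigr)\times\{j_0\}$, observes that $\Gamma_0$ is a directed co-final subset of $\Gamma$ (so the restricted net is a quasi-subnet of $M$), and then notes that this column net is \emph{strongly} tail equivalent to $S_{j_0}$; transitivity of $\preceq$ finishes the argument. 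You unwind this into a single explicit index-chase; in effect your $\gamma=(\bar\beta,j)$ ranges over the $j$-th column, so the underlying mechanism is identical, but you avoid invoking co-finality and (strong) tail equivalence. What the paper's packaging buys is slightly more information: the tail sets of $M$ restricted to the $j$-th column are \emph{exactly} the tail sets of $S_j$, not merely contained in tails of $M$, and this column viewpoint is reused in the next lemma identifying $[M]$ with $\bigcap_{S\in\Sigma}[S]$. What your version buys is self-containedness: it verifies the definition of $\preceq$ with no auxiliary notions, and it handles explicitly the (harmless) mismatch between the fixed $j$ and the last coordinate $j_0$ of $\gamma_0$, a point the paper leaves implicit in its co-finality claim.
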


\begin{proof}
  Using the notation above, fix $j_0\in J$ and let
  $\Gamma_0\subseteq\Gamma$ be the ``$j_0$-th column'' of~$\Gamma$:
  \begin{math}
     \Gamma_0=\Bigl(\prod_{j\in J}A_j\Bigr)\times\{j_0\}.
  \end{math}
  It is easy to see that $\Gamma_0$ is a directed co-final subset
  of~$\Gamma$, hence $(y_\gamma)_{\gamma\in\Gamma_0}$ is a
  quasi-subnet of~$M$. On the other hand,
  $(y_\gamma)_{\gamma\in\Gamma_0}$ has the same terms as $S_{j_0}$ and
  the two nets are strongly tail equivalent.
\end{proof}

\begin{lemma}
  Let $\Sigma$ be a family of nets in $X$ and $M$ the mixing
  of~$\Sigma$. Then the tail filter of $M$ is the intersection of the
  tail filters of the nets that make up~$\Sigma$.
\end{lemma}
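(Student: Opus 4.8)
The plan is to prove the two inclusions $[M]\subseteq\bigcap_{j\in J}[S_j]$ and $\bigcap_{j\in J}[S_j]\subseteq[M]$ separately, writing $[S_j]$ and $[M]$ for the tail filters of $S_j$ and of~$M$. The first inclusion will be immediate from the previous lemma: each $S_j$ is a quasi-subnet of~$M$, i.e.\ $S_j\preceq M$, and by the tail-filter characterization of $\preceq$ recorded at the start of Section~\ref{sec:equiv} (namely $(x_\alpha)\preceq(y_\beta)$ iff $[y_\beta]\subseteq[x_\alpha]$) this says exactly $[M]\subseteq[S_j]$. Intersecting over $j\in J$ then gives $[M]\subseteq\bigcap_{j\in J}[S_j]$.

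For the reverse inclusion I would first compute a generic tail set of~$M$ explicitly. Fix $\gamma_0=(\bar\alpha_0,j_0)\in\Gamma$ and write $\alpha_0^{(j)}=\bar\alpha_0(j)$ for its coordinates. Since the pre-order on $\Gamma$ constrains only the first component, the indices $\gamma=(\bar\alpha,j)\ge\gamma_0$ are precisely those with $\bar\alpha\ge\bar\alpha_0$ coordinatewise and $j\in J$ arbitrary, and $y_\gamma=x_{\bar\alpha(j)}$. Running over all such $\gamma$ should yield
\[
  \{y_\gamma\}_{\gamma\ge\gamma_0}
  =\bigcup_{j\in J}\bigl\{x_{\alpha^{(j)}}\bigr\}_{\alpha^{(j)}\ge\alpha_0^{(j)}}.
\]
The inclusion ``$\subseteq$'' is clear; for ``$\supseteq$'' one fixes $j\in J$ and $\alpha^{(j)}\ge\alpha_0^{(j)}$ and takes $\bar\alpha$ whose $j$-th coordinate is $\alpha^{(j)}$ and whose remaining coordinates agree with $\bar\alpha_0$, so that $\bar\alpha\ge\bar\alpha_0$ and $y_{(\bar\alpha,j)}=x_{\alpha^{(j)}}$. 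Thus each tail set of $M$ is a union of one tail set from each net in~$\Sigma$, and the free second coordinate $j_0$ plays no role.

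With this identity in hand I would finish as follows. Let $U\in\bigcap_{j\in J}[S_j]$, so $U$ contains a tail of each $S_j$; for every $j$ choose (invoking the axiom of choice, available in ZFC) an index $\beta_j\in A_j$ with $\{x_{\alpha^{(j)}}\}_{\alpha^{(j)}\ge\beta_j}\subseteq U$, and assemble $\bar\beta=(\beta_j)_{j\in J}\in\prod_{j\in J}A_j$. Taking $\gamma_0=(\bar\beta,j_0)$ for any $j_0$, the displayed identity gives $\{y_\gamma\}_{\gamma\ge\gamma_0}=\bigcup_{j\in J}\{x_{\alpha^{(j)}}\}_{\alpha^{(j)}\ge\beta_j}\subseteq U$, so $U$ contains a tail of $M$ and hence $U\in[M]$. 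This proves $\bigcap_{j\in J}[S_j]\subseteq[M]$ and completes the argument.

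The main obstacle I anticipate is getting the tail-set computation for $M$ exactly right: in particular, recognizing that because $J$ carries no order, a single tail of $M$ simultaneously sweeps out a tail of \emph{every} $S_j$, so that the tails of $M$ are exactly the unions $\bigcup_{j\in J}(\text{tail of }S_j)$. The only other delicate point will be the simultaneous selection of the indices $\beta_j$ across a possibly infinite~$J$; this is a legitimate appeal to choice and is precisely the step that converts ``a tail of each $S_j$ lies in $U$'' into ``a single tail of $M$ lies in $U$''.
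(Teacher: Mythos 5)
Your proof is correct and takes essentially the same route as the paper's: the inclusion $[M]\subseteq\bigcap_{S\in\Sigma}[S]$ via the quasi-subnet lemma, and the reverse inclusion by choosing a tail of each $S_j$ inside $U$ and assembling them into a single tail of~$M$. Your explicit identity $\{y_\gamma\}_{\gamma\ge\gamma_0}=\bigcup_{j\in J}\{x_{\alpha^{(j)}}\}_{\alpha^{(j)}\ge\alpha_0^{(j)}}$, exploiting that the second coordinate of $\Gamma$ is unordered, is precisely the ``routine verification'' the paper leaves implicit when it asserts that the mixing of the chosen tails is a tail of $M$ still contained in the given set.
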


\begin{proof}
  Recall that we write $[M]$ for the tail filter
  of~$M$. Thus, we need to show that $[M]=\bigcap_{S\in \Sigma}[S]$.

  Let $A\in[M]$. Then $A$ contains a tail of~$M$, hence a tail of
  every $S\in\Sigma$, because each $S$ is a quasi-subnet of $M$ by the
  Lemma above.

  Conversely, suppose $A\in\bigcap_{S\in \Sigma}[S]$. For every
  $S\in \Sigma$, $A$ contains a tail of~$S$. It is a routine
  verification that the mixing of these tails is a tail of $M$ that
  is still contained in~$A$.
\end{proof}

Recall that if $X$ is a (filter) convergence space and $x\in X$, the
neighborhood filter $\mathcal U_x$ of $x$ is defined as the
intersection of all filters that converge to~$x$. A convergence
structure is \term{pretopological} if $\mathcal U_x\to x$ for
every~$x$. This terminology is motivated by the situation in
topological convergence structures: the definition of $\mathcal U_x$
agrees with the usual neighborhood filter~$\mathcal N_x$, which
converges to~$x$. We are now ready to show that extending (N3*) from
two nets to an arbitrary collection of nets characterizes
pretopological convergence structures in terms of nets.

\begin{theorem}\label{pretop}
  A convergence space $X$ is pretopological iff for every $x\in X$ and
  every family of nets converging to~$x$, the mixing of the family
  also converges to~$x$.
\end{theorem}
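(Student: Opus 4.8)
The plan is to prove both directions by translating between the net-language condition and the filter-language definition of pretopological, using the two lemmas just proved about mixings.

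For the forward direction, suppose $X$ is pretopological, fix $x\in X$, and let $\Sigma$ be a family of nets each converging to~$x$. Let $M$ be the mixing of~$\Sigma$. By the second lemma above, the tail filter of $M$ satisfies $[M]=\bigcap_{S\in\Sigma}[S]$. Since each $S$ converges to~$x$, we have $[S]\to x$ for every $S\in\Sigma$, so $[M]=\bigcap_{S\in\Sigma}[S]$ contains the neighborhood filter $\mathcal U_x=\bigcap\{\mathcal F\mid\mathcal F\to x\}$; that is, $\mathcal U_x\subseteq[M]$. Because $X$ is pretopological, $\mathcal U_x\to x$, and then axiom (F2) gives $[M]\to x$, which means $M\to x$ in the net structure. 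Thus the mixing converges to~$x$.

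For the converse, assume the net-mixing condition and fix $x\in X$; I want to show $\mathcal U_x\to x$. The idea is to realize $\mathcal U_x$ as the tail filter of a mixing. Take $\Sigma$ to be a family of nets representing \emph{all} filters converging to~$x$: for each filter $\mathcal F\to x$, use Proposition~\ref{tfb-net} to choose an admissible net $S_{\mathcal F}$ with tail filter $[S_{\mathcal F}]=\mathcal F$, and set $\Sigma=\{S_{\mathcal F}\mid\mathcal F\to x\}$. Each $S_{\mathcal F}$ then converges to~$x$ by construction. By hypothesis the mixing $M$ of $\Sigma$ converges to~$x$, and by the second lemma its tail filter is
\begin{displaymath}
  [M]=\bigcap_{\mathcal F\to x}[S_{\mathcal F}]=\bigcap_{\mathcal F\to x}\mathcal F=\mathcal U_x.
\end{displaymath}
Since $M\to x$ means $[M]\to x$, we conclude $\mathcal U_x\to x$, so $X$ is pretopological.

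The main subtlety I anticipate is the set-theoretic legitimacy of the family $\Sigma$ in the converse: I am indexing over ``all filters converging to~$x$,'' which is a genuine set since it is a subset of $\mathfrak F(X)$, so Definition~\ref{mixing} applies and the product $\prod_{j\in J}A_j$ over the chosen admissible index sets is a well-defined set. One should also be slightly careful that the mixing lemma giving $[M]=\bigcap_{S\in\Sigma}[S]$ was stated for an arbitrary family $\Sigma$, which is exactly what is used here, so no new work is needed. Everything else is a direct application of (F2) together with the defining equation $\mathcal U_x=\bigcap\{\mathcal F\mid\mathcal F\to x\}$, so the argument is short once the translation through tail filters is set up.
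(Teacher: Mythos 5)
Your proof is correct and follows essentially the same route as the paper's: both directions use the lemma $[M]=\bigcap_{S\in\Sigma}[S]$ together with Proposition~\ref{tfb-net} to translate between $\mathcal U_x$ and the tail filter of a mixing, with (F2) closing the forward direction. Your added remark on the set-theoretic legitimacy of indexing $\Sigma$ by the set of filters converging to $x$ is a sound clarification of a point the paper leaves implicit.
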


\begin{proof}
  Suppose that the convergence structure is pretopological. Let
  $\Sigma$ be a collection of nets converging to $x$ where $M$ is the
  mixing of~$\Sigma$. For every $S\in\Sigma$ we have $S\to x$, hence
  $[S]\to x$. It follows from
  \begin{math}
    [M]=\bigcap_{S\in\Sigma}[S]\supseteq\mathcal U_x\to x
  \end{math}   
  that $[M]\to x$ and, therefore, $M\to x$. 

  To prove the converse, note that for each filter $\mathcal F$ that
  converges to $x$ we can find a net $S$ such that $S\to x$ and
  $\mathcal F=[S]$. Denote the resulting collection of nets
  by~$\Sigma$ and let $M$ be the mixing of~$\Sigma$. Then $M\to x$ and
  we get 
  \begin{math}
    \mathcal U_x=\bigcap_{S\in\Sigma}[S]=[M]\to x.
  \end{math}
\end{proof}

We now characterize topological convergence structures in similar
terms. There is extensive literature on this subject:
\cite[p.~74]{Kelley:76}, \cite{Aarnes:72}, \cite[15.10]{Schechter:97},
and \cite{Beattie:02} are some examples. Of note, Proposition~1.3.21
of~\cite{Beattie:02} shows that a convergence structure is topological
iff it is pretopological and the closure of every set is
closed. Compare this to \cite[p.~69]{Kelley:76}, which proves that a
convergence structure is topological when it satisfies a certain
\emph{iterated limit property}. In the case of sequences, this
essentially means the limit of a double sequence $(x_{n,m})$ agrees
with its iterated limit; i.e., $\lim x_{m,n}=\lim_m\lim x_{m,n}$
whenever the latter limit exists. For nets, the iterated limit
property is closely related to the concept of mixing.

As in Definition~\ref{mixing}, we let $\Sigma=\{S_j\mid j\in J\}$
denote a set of nets and emphasize the index sets $A_j$ for each
net~$S_j$; that is, $S_j=(x_{\alpha^{(j)}})_{\alpha^{(j)}\in A_j}$ for
each $j\in J$. The only difference now is that we make the additional
assumption that $J$ is a directed pre-ordered set. This allows us to
view $\Sigma$ as a net $(S_j)_{j\in J}$ of nets. Take
\begin{math}
  \Delta=\Bigl(\prod_{j\in J}A_j\Bigr)\times J
\end{math}
and equip it with the product pre-order: for
$\gamma_1=(\bar\alpha_1,j_1)$ and $\gamma_2=(\bar\alpha_2,j_2)$ we
declare $\gamma_1\le\gamma_2$ whenever
$\alpha^{(j)}_1\le\alpha^{(j)}_2$ for all $j\in J$ and $j_1\le j_2$.
It is easy to see that this makes $\Delta$ into a directed set. For
$\rho=(\bar\alpha,j)\in \Delta$, put $r_\rho=x_{\bar{\alpha}(j)}$. The
resulting net $R=(r_\rho)_{\rho\in\Delta}$ is called the
\term{reaction} of $J$ with~$\Sigma$.

Note that as a set, $\Delta$ coincides with $\Gamma$ in the definition
of mixing; however, the two differ as directed sets: the only difference is
$\Delta$ can ``see'' an order on~$J$. Similarly, $R$ and the mixing
of~$\Sigma$ agree as functions but are distinct as nets: $R$ is a
quasi-subnet of~$M$.

We say that a convergence structure has the \term{iterated limit
property} if $\lim R=\lim_{j\in J}\lim S_j$, whenever the limit in
the right hand side exists, for every net $\Sigma$ of nets in~$X$.

The following result is essentially contained in
\cite[15.10]{Schechter:97}, but we present a shorter proof.

\begin{theorem}
  A convergence structure $X$ is topological iff it satisfies the
  iterated limit property.
\end{theorem}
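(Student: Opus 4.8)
The plan is to combine the criterion recalled just above the statement---that a convergence structure is topological precisely when it is pretopological and the closure of every set is closed (\cite[Proposition~1.3.21]{Beattie:02})---with the net-theoretic characterization of pretopological structures provided by Theorem~\ref{pretop}.

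For the forward implication, suppose $X$ is topological, and let $\Sigma=(S_j)_{j\in J}$ be a net of nets with $S_j=(x_{\alpha^{(j)}})_{\alpha^{(j)}\in A_j}\to y_j$ for each $j$ and $(y_j)_{j\in J}\to x$; I must show the reaction $R\to x$. Since the structure is topological, it suffices to produce, for each open neighborhood $U$ of $x$, a tail of $R$ contained in~$U$. First I would use $(y_j)\to x$ together with openness of $U$ to find $j_0\in J$ with $y_j\in U$ for all $j\ge j_0$; then, for each such $j$, since $S_j\to y_j\in U$, openness of $U$ yields a threshold $\alpha^{(j)}_0\in A_j$ beyond which $S_j$ remains in~$U$. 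Choosing $\bar\beta\in\prod_{j}A_j$ with $\bar\beta(j)=\alpha^{(j)}_0$ for $j\ge j_0$ (and arbitrary otherwise), the index $\rho_0=(\bar\beta,j_0)\in\Delta$ works: any $\rho=(\bar\alpha,j)\ge\rho_0$ forces $j\ge j_0$ and $\bar\alpha(j)\ge\bar\beta(j)=\alpha^{(j)}_0$, whence $r_\rho=x_{\bar\alpha(j)}\in U$. As $U$ was arbitrary, $R\to x$.

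For the converse I would verify the two conditions of the criterion. To see that $X$ is pretopological, I apply the iterated limit property with a cleverly chosen order on the index set of the family. Given a family $\Sigma=\{S_j\mid j\in J\}$ of nets all converging to $x$ as in Theorem~\ref{pretop}, equip $J$ with the \emph{trivial} pre-order in which $j\le j'$ for all $j,j'$; this is directed, so $\Sigma$ becomes a net of nets. The key observation is that under this order the reaction $R$ coincides with the mixing $M$ of $\Sigma$, since the extra requirement ``$j_1\le j_2$'' in the order on $\Delta$ becomes vacuous, leaving exactly the order on~$\Gamma$. Taking $y_j=x$ for all $j$, the constant net $(y_j)_{j\in J}$ converges to $x$ by (N1), so the iterated limit property gives $M=R\to x$; by Theorem~\ref{pretop}, $X$ is pretopological. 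To see that the closure of every set is closed, let $A\subseteq X$ and $x\in\overline{\overline A}$. Then some net $(y_j)_{j\in J}$ in $\overline A$ converges to $x$, and for each $j$ some net $S_j$ in $A$ converges to $y_j$. Viewing $(S_j)_{j\in J}$ as a net of nets, the iterated limit property yields $R\to x$; but every term of $R$ is a term of some $S_j$, so $R$ is a net in $A$ and therefore $x\in\overline A$. Hence $\overline A$ is closed, and $X$ is topological.

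I expect the main obstacle to be the converse direction, and specifically the pretopological step: the reaction $R$ is only a quasi-subnet of the mixing $M$, so $R\to x$ does not by itself give $M\to x$. The trivial-order trick is what resolves this, collapsing $R$ onto $M$ so that the iterated limit property delivers exactly the mixing convergence demanded by Theorem~\ref{pretop}.
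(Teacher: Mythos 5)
Your proof is correct and follows essentially the same route as the paper: for the converse you invoke the criterion that topological $=$ pretopological $+$ closures are closed, and your trivial pre-order trick identifying the reaction $R$ with the mixing $M$ is exactly the paper's key step (your closing remark that $R$ is merely a quasi-subnet of $M$ in general, so the trivial order is what makes the identification work, is the right diagnosis). The only difference is that for the forward implication the paper simply cites Kelley, whereas you give a short self-contained neighborhood argument, which is a correct and welcome addition.
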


\begin{proof}
  The forward implication is proved in~\cite[p.~69]{Kelley:76}. For
  the converse, suppose that $X$ has the iterated limit property. It
  suffices to verify that $X$ is pretopological and the closure of
  every set is closed. The latter condition is straightforward.  To
  prove that $X$ is pretopological we use Theorem~\ref{pretop}. Let
  $\Sigma=\{S_j\mid j\in J\}$ be as in Definition~\ref{mixing}, and let
  $M$ be the mixing of~$\Sigma$. Assume that $S_j\to x$ for every
  $j\in J$. We will show $M\to x$. Give $J$ the trivial pre-order by declaring
  $j_1\le j_2$ for every $j_1,j_2\in J$, and let $R$ be the reaction of $J$
  with~$\Sigma$. Then $R\to x$ by the iterated limit property. However, it
  is easy to see that under our pre-ordering of~$J$, $M$ and $R$ agree
  as nets. It immediately follows that $M\to x$.
\end{proof}

\section{Continuous convergence}
\label{sec:cc}

In this section we apply net convergence theory to reprove some
results about continuous convergence from~\cite{Beattie:02}.  While
the results themselves are nothing new, our arguments highlight the
simplicity of the net language for functional analysis.

Let $X$ and $Y$ be two convergence spaces, and use $C(X,Y)$ to denote
the set of all continuous functions from $X$ to~$Y$.  \term{Continuous
  convergence} on $C(X,Y)$ is defined as follows: $f_\alpha\goesc f$
if $f_\alpha(x_\beta)\to f(x)$ in $Y$ whenever $x_\beta\to x$
in~$X$. Note that we treat
$\bigl(f_\alpha(x_\beta)\bigr)_{(\alpha,\beta)}$ as a net over the
product of the index sets. Note also that this net need not be
admissible.  It is easy to verify that continuous convergence is a net
convergence structure. One should compare this with the definition in
terms of filters: for a filter $\Phi$ on $C(X,Y)$, $\Phi\goesc f$ if
$\Phi(\mathcal F)\to f(x)$ whenever $\mathcal F\to x$; here
$\mathcal F$ is a filter on $X$ and $\Phi(\mathcal F)$ is the filter
generated by the sets of the form $F(A)$ where $F\in\Phi$ and
$A\in\mathcal F$.

These definitions are equivalent in the sense that $\Phi\goesc f$ iff
$\Phi=[f_\alpha]$ for some net $(f_\alpha)$ with $f_\alpha\goesc
f$. This is an easy corollary of the fact that
$\Phi(\mathcal F)=\bigl[f_\alpha(x_\beta)\bigr]$ whenever
$\Phi=[f_\alpha]$ and $\mathcal F=[x_\beta]$, where $(f_\alpha)$ is a
net in $C(X,Y)$ and $(x_\beta)$ a net in~$X$.

We will write $C_c(X,Y)$ for the set $C(X,Y)$ equipped with the
continuous convergence structure. We will write $C_c(X)$ in place of
$C_c(X,\mathbb K)$ where the scalar field is always assumed to carry
the standard convergence. It is easy to see that $C_c(X)$ is a
convergence vector space.

The next few results remain valid, with similar proofs, if we replace
$C_c(X)$ with $C_c(X,Y)$ where $Y$ is a Hausdorff complete topological
vector space. We use $C_c(X)$ rather than $C_c(X,Y)$ for simplicity.

\begin{proposition}\label{Cc-complete}
  For every convergence space~$X$, $C_c(X)$ is complete.
\end{proposition}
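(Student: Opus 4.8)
The plan is to produce the limit function by taking a pointwise limit and then to show that continuous convergence automatically upgrades this pointwise limit to the required convergence. Let $(f_\alpha)_{\alpha\in\Lambda}$ be a Cauchy net in $C_c(X)$, so that the net $(f_\alpha-f_{\alpha'})_{(\alpha,\alpha')\in\Lambda\times\Lambda}$ converges continuously to $0$. First I would test the Cauchy condition against the constant net at a point $x\in X$, which converges to $x$ by (N1): this shows that $\bigl(f_\alpha(x)\bigr)_{\alpha\in\Lambda}$ is a Cauchy net in $\mathbb K$. Since $\mathbb K$ is complete, I may define $f(x)=\lim_\alpha f_\alpha(x)$ pointwise. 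The goal is then to prove that $f$ is continuous, so that $f\in C(X)$, and that $f_\alpha\goesc f$.

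The engine of the argument is the following reformulation of the Cauchy condition. Fix a net $x_\beta\to x$ in $X$ and $\varepsilon>0$. By continuous convergence applied to $(f_\alpha-f_{\alpha'})$, the net $\bigl(f_\alpha(x_\beta)-f_{\alpha'}(x_\beta)\bigr)$ over the product index set converges to $0$; hence there are $\alpha_0\in\Lambda$ and an index $\beta_0$ of $(x_\beta)$ such that $\abs{f_\alpha(x_\beta)-f_{\alpha'}(x_\beta)}<\varepsilon/3$ whenever $\alpha,\alpha'\ge\alpha_0$ and $\beta\ge\beta_0$. Holding $\alpha\ge\alpha_0$ and $\beta\ge\beta_0$ fixed and letting $\alpha'$ run (using $f_{\alpha'}(x_\beta)\to f(x_\beta)$, which is the definition of $f$ at the point $x_\beta$) yields the key uniform estimate $\abs{f_\alpha(x_\beta)-f(x_\beta)}\le\varepsilon/3$ for all $\alpha\ge\alpha_0$ and $\beta\ge\beta_0$.

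With this estimate in hand, both remaining claims follow from the same three-term split. To prove continuity of $f$, I would choose $\alpha^*\ge\alpha_0$ with $\abs{f_{\alpha^*}(x)-f(x)}<\varepsilon/3$ (pointwise convergence at $x$), then use continuity of the single function $f_{\alpha^*}$ together with $x_\beta\to x$ to find $\beta_1\ge\beta_0$ with $\abs{f_{\alpha^*}(x_\beta)-f_{\alpha^*}(x)}<\varepsilon/3$ for $\beta\ge\beta_1$; the triangle inequality $\abs{f(x_\beta)-f(x)}\le\abs{f(x_\beta)-f_{\alpha^*}(x_\beta)}+\abs{f_{\alpha^*}(x_\beta)-f_{\alpha^*}(x)}+\abs{f_{\alpha^*}(x)-f(x)}$, with the first term controlled by the uniform estimate, then gives $f(x_\beta)\to f(x)$, so $f\in C(X)$. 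For the convergence $f_\alpha\goesc f$, I would run an almost identical split on $\abs{f_\alpha(x_\beta)-f(x)}\le\abs{f_\alpha(x_\beta)-f_{\alpha^*}(x_\beta)}+\abs{f_{\alpha^*}(x_\beta)-f_{\alpha^*}(x)}+\abs{f_{\alpha^*}(x)-f(x)}$, now using the Cauchy bound of the second paragraph on the first term, to conclude that $f_\alpha(x_\beta)\to f(x)$ over the product net for every $x_\beta\to x$.

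The main obstacle is bookkeeping with quantifiers rather than any deep idea: the Cauchy hypothesis is a statement about the double-indexed net over $\Lambda\times\Lambda$ evaluated along $(x_\beta)$, and it must be converted into a single uniform bound valid simultaneously for all large $\alpha$ and all late $\beta$, after which the passage $\alpha'\to\infty$ to the pointwise limit $f(x_\beta)$ must be justified termwise. The one genuinely necessary hypothesis is the completeness of the scalar field $\mathbb K$, which is what allows the pointwise limit $f$ to exist in the first place; the same proof works verbatim with $\mathbb K$ replaced by any Hausdorff complete topological vector space $Y$.
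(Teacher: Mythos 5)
Your proposal is correct and follows essentially the same approximation argument as the paper: define $f$ as the pointwise limit (which exists by completeness of $\mathbb K$), then control $\abs{f(x_\beta)-f(x)}$ and $\abs{f_\alpha(x_\beta)-f(x)}$ by a triangle-inequality split through the Cauchy bound and the continuity of a single fixed member of the net. The only cosmetic difference is that you let $\alpha'$ run to infinity in the Cauchy estimate to obtain the uniform bound $\abs{f_\alpha(x_\beta)-f(x_\beta)}\le\varepsilon/3$ for all $\alpha\ge\alpha_0$ and $\beta\ge\beta_0$, whereas the paper instead picks, for each fixed index of the convergent net, an auxiliary index $\alpha_1$ and uses a four-term split; both are valid variants of the same argument.
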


\begin{proof}
  The proof is a standard approximation argument.  Let
  $(f_\alpha)_{\alpha\in\Lambda}$ be Cauchy net in $C_c(X)$. In
  particular, for every $x\in X$, the double net
  $\bigl(f_\alpha(x)-f_\beta(x))_{(\alpha,\beta)}$ converges to zero
  in~$\mathbb K$. It follows that the net $\bigl(f_\alpha(x)\bigr)$ is
  Cauchy and, therefore, converges to a unique limit in the usual
  convergence of~$\mathbb K$. Write $f(x)=\lim_\alpha f_\alpha(x)$. It
  is left to show that $f$ is continuous and $f_\alpha\goesc f$.

  Let $(x_\gamma)_{\gamma\in\Gamma}$ be a net in $X$ with $x_\gamma\to
  x$. It follows from $f_\alpha-f_\beta\goesc 0$ that
  $f_\alpha(x_\gamma)-f_\beta(x_\gamma)\to 0$ in~$\mathbb K$, where the
  expression is a net over $\Lambda\times\Lambda\times\Gamma$. Fix
  $\varepsilon>0$. There exist $\alpha_0$, $\beta_0$, and $\gamma_0$
  such that
  \begin{displaymath}
    \bigabs{f_\alpha(x_\gamma)-f_\beta(x_\gamma)}<\varepsilon
  \end{displaymath}
  whenever $\alpha\ge\alpha_0$, $\beta\ge\beta_0$, and
  $\gamma\ge\gamma_0$.

  Fix $\beta_1\ge\beta_0$ such that
  \begin{displaymath}
    \bigabs{f_{\beta_1}(x)-f(x)}<\varepsilon;
  \end{displaymath}
  this can be done by the
  definition of $f(x)$. Since $f_{\beta_1}$ is continuous, we can
  find $\gamma_1\ge\gamma_0$ such that
  \begin{displaymath}
    \bigabs{f_{\beta_1}(x_\gamma)-f_{\beta_1}(x)}<\varepsilon
  \end{displaymath}
  whenever $\gamma\ge\gamma_1$. Fix $\gamma\ge\gamma_1$. Using the
  definition of $f$ again, we find some $\alpha_1\ge\alpha_0$
  such that
  \begin{displaymath}
    \bigabs{f_{\alpha_1}(x_\gamma)-f(x_\gamma)}<\varepsilon.
  \end{displaymath}
  Combining these inequalities, we get
  \begin{multline*}
    \bigabs{f(x_\gamma)-f(x)}
    \le\bigabs{f(x_\gamma)-f_{\alpha_1}(x_\gamma)}
      +\bigabs{f_{\alpha_1}(x_\gamma)-f_{\beta_1}(x_\gamma)}\\
      +\bigabs{f_{\beta_1}(x_\gamma)-f_{\beta_1}(x)}
      +\bigabs{f_{\beta_1}(x)-f(x)}
      <4\varepsilon.
  \end{multline*}
  It follows that $f(x_\gamma)\to f(x)$, hence $f$ is
  continuous. Furthermore, it follows from the first three
  inequalities that for every $\alpha\ge\alpha_0$ and every
  $\gamma\ge\gamma_1$ we have
  \begin{displaymath}
    \bigabs{f_\alpha(x_\gamma)-f(x)}
    \le\bigabs{f_\alpha(x_\gamma)-f_{\beta_1}(x_\gamma)}
    +\bigabs{f_{\beta_1}(x_\gamma)-f_{\beta_1}(x)}
    +\bigabs{f_{\beta_1}(x)-f(x)}
    <3\varepsilon,
  \end{displaymath}
  which yields $f_\alpha\goesc f$.
\end{proof}

We now define \term{uniform convergence on compacta (ucc)}.  Let $X$
be a convergence space and $f\in C(X)$. For a net $(f_\alpha)$ in
$C(X)$, we write $f_\alpha\goesucc f$ if $f_\alpha$ converges to $f$
uniformly on every compact subset of~$X$.  Note that ucc convergence
on $C(X)$ is topological and locally convex; this topology has a base
of zero neighborhoods given by the sets
\begin{displaymath}
  V_{A,\varepsilon}=\bigl\{f\in C(X)\mid \bigabs{f(x)}<\varepsilon
  \mbox{ for all }x\in A\bigr\},
\end{displaymath}
where $\varepsilon$ is a positive real and $A$ is a compact subset of~$X$.

\begin{proposition}
  Let $X$ be a convergence space. If $f_\alpha\goesc f$ then
  $f_\alpha\goesucc f$ in $C(X)$.
\end{proposition}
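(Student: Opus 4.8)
\section*{Proof proposal}

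The plan is to argue by contradiction, converting a failure of ucc convergence into a single convergent net of points on which continuous convergence forces an impossible estimate. Suppose $f_\alpha\goesc f$ but $f_\alpha\not\goesucc f$, where $(f_\alpha)_{\alpha\in\Lambda}$ is the net in question. Then there is a compact set $A\subseteq X$ and an $\varepsilon>0$ for which the bound $\sup_{x\in A}\bigabs{f_\alpha(x)-f(x)}<\varepsilon$ fails cofinally; that is, $\Lambda_0=\bigl\{\alpha\in\Lambda\mid\sup_{x\in A}\bigabs{f_\alpha(x)-f(x)}\ge\varepsilon\bigr\}$ is co-final in~$\Lambda$, and therefore a directed set under the induced pre-order. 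For each $\alpha\in\Lambda_0$ I would pick a witness $x_\alpha\in A$ with $\bigabs{f_\alpha(x_\alpha)-f(x_\alpha)}>\tfrac{\varepsilon}{2}$, which is possible even when the supremum is not attained.

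Next I would feed the net $(x_\alpha)_{\alpha\in\Lambda_0}$ in $A$ into compactness: it has a quasi-subnet converging to some $x\in A$. By Proposition~\ref{subnet-q-subnet} this quasi-subnet is tail equivalent to a subnet, and since tail equivalence preserves convergence by (N2b), I obtain a genuine subnet $(x_{\varphi(\delta)})_{\delta\in D}$, with $\varphi\colon D\to\Lambda_0$ having co-final range, such that $x_{\varphi(\delta)}\to x$. The point of insisting on a \emph{subnet} rather than a quasi-subnet is that along it the evaluation point $x_{\varphi(\delta)}$ and the functional $f_{\varphi(\delta)}$ carry the \emph{same} index $\varphi(\delta)\in\Lambda_0$, which is precisely the index at which the witness inequality $\bigabs{f_{\varphi(\delta)}(x_{\varphi(\delta)})-f(x_{\varphi(\delta)})}>\tfrac{\varepsilon}{2}$ is known to hold.

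Finally I would apply continuous convergence to the convergent net $x_{\varphi(\delta)}\to x$: the product net $\bigl(f_\alpha(x_{\varphi(\delta)})\bigr)_{(\alpha,\delta)\in\Lambda\times D}$ converges to $f(x)$ in~$\mathbb K$, so there exist $\alpha_0\in\Lambda$ and $\delta_0\in D$ with $\bigabs{f_\alpha(x_{\varphi(\delta)})-f(x)}<\tfrac{\varepsilon}{4}$ for all $\alpha\ge\alpha_0$ and $\delta\ge\delta_0$. Continuity of $f$ gives $\delta_1$ with $\bigabs{f(x_{\varphi(\delta)})-f(x)}<\tfrac{\varepsilon}{4}$ for $\delta\ge\delta_1$, and co-finality of $\Range\varphi$ gives $\delta_2$ with $\varphi(\delta)\ge\alpha_0$ for $\delta\ge\delta_2$. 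Choosing $\delta\in D$ above $\delta_0,\delta_1,\delta_2$ and writing $\alpha=\varphi(\delta)\in\Lambda_0$, I would conclude
\begin{displaymath}
  \tfrac{\varepsilon}{2}<\bigabs{f_\alpha(x_\alpha)-f(x_\alpha)}
  \le\bigabs{f_\alpha(x_\alpha)-f(x)}+\bigabs{f(x)-f(x_\alpha)}
  <\tfrac{\varepsilon}{4}+\tfrac{\varepsilon}{4}=\tfrac{\varepsilon}{2},
\end{displaymath}
which is the desired contradiction.

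The main obstacle is purely organizational: reconciling the single running index $\delta$ of the extracted subnet with the two independent indices $(\alpha,\delta)$ governing the continuous-convergence product net. The resolution is to read the product net along the ``diagonal'' choice $\alpha=\varphi(\delta)$, which is legitimate precisely because $\varphi(\delta)\ge\alpha_0$ and $\delta\ge\delta_0$ hold simultaneously, while the subnet structure guarantees that this same index carries the witness inequality. Everything else — that co-final subsets are directed, that tail equivalence preserves limits, and the $\varepsilon$-form of convergence in $\mathbb K$ — is routine.
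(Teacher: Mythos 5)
Your proof is correct and takes essentially the same route as the paper's: a contradiction argument that picks witness points in the compact set along a cofinal family of indices, uses compactness together with Proposition~\ref{subnet-q-subnet} and (N2) to extract a convergent subnet of those points, and then refutes the witness inequality by reading the continuous-convergence product net along the diagonal $\alpha=\varphi(\delta)$. The paper differs only cosmetically --- it reduces to $f=0$ by linearity and re-indexes the functions via a choice $\beta(\alpha)\ge\alpha$, forming the quasi-subnet $g_\alpha=f_{\beta(\alpha)}$, whereas you keep $f$ general (using its continuity) and make explicit the quasi-subnet-to-subnet conversion and index-matching that the paper's phrase ``passing to a further quasi-subnet, we may assume'' leaves implicit.
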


\begin{proof}
  By linearity we may assume $f=0$. For the sake of contradiction, let
  $(f_\alpha)_{\alpha\in\Lambda}$ be a net in $C(X)$ such that
  $f_\alpha\goesc 0$ but $f_\alpha$ fails to converge to $0$ uniformly
  on some compact subset $A$ of~$X$. It follows that there exists
  $\varepsilon>0$ such that for every $\alpha\in\Lambda$ there exists
  $x_\alpha\in A$ and $\beta=\beta(\alpha)$ in $\Lambda$ with
  $\beta(\alpha)\ge\alpha$ such that
  $\bigabs{f_{\beta(\alpha)}(x_\alpha)}>\varepsilon$. Then
  $g_\alpha=f_{\beta(\alpha)}$ defines a quasi-subnet of
  $(f_\alpha)$, hence $g_\alpha\goesc 0$. Employing the compactness of
  $A$ and passing to a further quasi-subnet, we may assume
  $(x_\alpha)$ is convergent. From this we obtain
  $g_\alpha(x_\alpha)\to 0$, which contradicts
  $\bigabs{g_\alpha(x_\alpha)}=\bigabs{f_{\beta(\alpha)}(x_\alpha)}>\varepsilon$
  for all~$\alpha$.
\end{proof}

\begin{corollary}\label{loc-conv-ucc}
  Let $X$ be a locally compact convergence space. Then the continuous
  and the ucc convergence structures on $C(X)$ agree. In particular,
  $C_c(X)$ is a locally convex topological vector space.
\end{corollary}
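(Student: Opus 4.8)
The plan is to prove the two structures coincide by checking each implies the other. One direction is free: the preceding proposition establishes, for \emph{arbitrary} $X$, that $f_\alpha\goesc f$ implies $f_\alpha\goesucc f$ in $C(X)$. Thus the entire content of the corollary is the reverse implication, and this is exactly where local compactness of $X$ enters. So I would first note that $\goesc$ is automatically stronger than $\goesucc$, and then concentrate on showing $f_\alpha\goesucc f\Rightarrow f_\alpha\goesc f$.

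For the reverse implication, I would reduce to $f=0$ using linearity of both structures, and then assume $f_\alpha\goesucc 0$. To verify $f_\alpha\goesc 0$ I must show that for every net $x_\beta\to x$ in $X$, the double net $\bigl(f_\alpha(x_\beta)\bigr)_{(\alpha,\beta)}$ over the product index set converges to $0$ in~$\mathbb K$. The key step is to invoke local compactness: since $(x_\beta)$ is convergent, it has a tail contained in some compact set~$K$, say $x_\beta\in K$ for all $\beta\ge\beta_0$. Because $f_\alpha\goesucc 0$, the net $(f_\alpha)$ converges to $0$ uniformly on~$K$, so for each $\varepsilon>0$ there is an $\alpha_0$ with $\abs{f_\alpha(y)}<\varepsilon$ for all $y\in K$ and all $\alpha\ge\alpha_0$. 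Then for every $(\alpha,\beta)\ge(\alpha_0,\beta_0)$ in the product we have $x_\beta\in K$ and hence $\abs{f_\alpha(x_\beta)}<\varepsilon$; this shows $f_\alpha(x_\beta)\to 0$, which is precisely $f_\alpha\goesc 0$.

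The ``in particular'' clause then follows at once: ucc convergence on $C(X)$ was already observed to be topological and locally convex, and since $\goesc$ now agrees with $\goesucc$, the space $C_c(X)$ is a locally convex topological vector space. I do not expect a serious obstacle in this argument; the only point requiring mild care is matching the quantifiers in the product index set $\Lambda\times\Gamma$, namely confirming that the single choice of $\alpha_0$ (from uniform convergence on~$K$) together with the single $\beta_0$ (from the compact tail) suffices to dominate the whole double net $\bigl(f_\alpha(x_\beta)\bigr)$ beyond $(\alpha_0,\beta_0)$.
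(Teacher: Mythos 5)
Your proposal is correct and follows essentially the same route as the paper: one direction is quoted from the preceding proposition, and the reverse direction uses local compactness to trap a tail of $(x_\beta)$ in a compact set and then applies uniform convergence on that set over the product index set, exactly as the paper does. The only difference is cosmetic: you reduce to $f=0$ by linearity (legitimate, since the paper notes that $C_c(X)$ is a convergence vector space and that ucc convergence is a linear topology), whereas the paper keeps a general $f$ and controls the extra term $\bigabs{f(x_\beta)-f(x)}$ explicitly via the continuity of~$f$ --- a step your reduction silently absorbs into the linearity claim.
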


\begin{proof}
  We already know that $f_\alpha\goesc f$ implies $f_\alpha\goesucc
  f$. Suppose that $f_\alpha\goesucc f$ in $C_c(X)$ and $x_\beta\to x$
  in~$X$. Fix $\varepsilon>0$. Since $X$ is locally compact, there
  exists a compact set $A$ in $X$ and an index $\beta_0$ such that
  $x_\beta\in A$ for all $\beta\ge\beta_0$. The continuity of $f$ yields
  some $\beta_1$ such that
  $\bigabs{f(x_\beta)-f(x)}<\varepsilon$ whenever
  $\beta\ge\beta_1$. Since $f_\alpha\goesucc f$, there exists an index
  $\alpha_0$ such that $\bigabs{f_\alpha(z)-f(z)}<\varepsilon$ for all
  $\alpha\ge\alpha_0$ and all $z\in A$. It follows that for all
  $\alpha\ge\alpha_0$ and all $\beta$ such that $\beta\ge\beta_0$ and
  $\beta\ge\beta_1$, we have
  \begin{displaymath}
    \bigabs{f_\alpha(x_\beta)-f(x)}
    \le\bigabs{f_\alpha(x_\beta)-f(x_\beta)}+\bigabs{f(x_\beta)-f(x)}
    <2\varepsilon
  \end{displaymath}
  implying $f_\alpha(x_\beta)\to f(x)$.
\end{proof}

Let $X$ be a convergence vector space. Following~\cite{Beattie:02}, we
write $\mathcal L_cX$ for the space of all continuous linear
functionals on~$X$ equipped with the continuous convergence
structure. Note that $\mathcal L_cX$ is a closed subspace of $C_c(X)$.
Then Proposition~\ref{Cc-complete} implies $\mathcal L_cX$ is
complete. One may view $\mathcal L_cX$ as the dual space of $X$ in the
category of convergence vector spaces.

Let $X$ be a topological vector space. As a set, $\mathcal L_cX$
agrees with the topological dual $X^*$ of~$X$.  The next proposition
compares the continuous convergence on $\mathcal L_cX$ with the
weak*-convergence, i.e., convergence in the $\sigma(X^*,X)$-topology.

\begin{proposition}\label{c-ws-pol}
  Let $X$ be a topological vector space. Then $f_\alpha\goesc f$ in
  $X^*$ iff $f_\alpha\goesws f$ and a tail of $(f_\alpha)$ is
  contained in the polar of a zero neighborhood in~$X$.
\end{proposition}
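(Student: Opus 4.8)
The plan is to prove the two implications separately, after a preliminary reduction. Both continuous convergence and weak*-convergence are linear, and a tail of $(f_\alpha)$ lies in a polar $U^\circ=\{g\in X^*\mid\abs{g(x)}\le1\text{ for all }x\in U\}$ iff a tail of $(f_\alpha-f)$ does (the lone continuous functional $f$ lies in some polar, and a sum of two polars is absorbed by a scalar multiple of the polar of an intersection). So I would first reduce to $f=0$. The one fact I would use repeatedly is that $f_\alpha\in U^\circ$ means exactly $\abs{f_\alpha(x)}\le1$ for all $x\in U$, so rescaling $U$ controls $\abs{f_\alpha(y)}$ for $y$ in smaller zero neighborhoods.

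For the converse direction the argument is a routine $\varepsilon$-split. Assuming $f_\alpha\goesws0$ and $\{f_\alpha\}_{\alpha\ge\alpha_0}\subseteq U^\circ$, let $x_\beta\to x$ in~$X$ and write $f_\alpha(x_\beta)=f_\alpha(x_\beta-x)+f_\alpha(x)$. I would bound $\abs{f_\alpha(x)}<\tfrac{\varepsilon}{2}$ for large $\alpha$ using weak*-convergence at the fixed point~$x$, and bound the first term using that $x_\beta-x\to0$: for $\alpha\ge\alpha_0$ the polar gives $\abs{f_\alpha(y)}\le\tfrac{\varepsilon}{2}$ whenever $y\in\tfrac{\varepsilon}{2}U$, and a tail of $(x_\beta-x)$ lies in the zero neighborhood $\tfrac{\varepsilon}{2}U$. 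Taking the larger index in $\Lambda$ and combining the two estimates shows $f_\alpha(x_\beta)\to0$ over the product index set, i.e.\ $f_\alpha\goesc0$.

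For the forward direction, weak*-convergence is immediate: applying $f_\alpha\goesc0$ to the constant net $x_\beta\equiv x$, which converges to $x$ by (N1), yields $f_\alpha(x)\to0$ for every fixed~$x$. The substantive step is producing a polar containing a tail. I would argue by contradiction: if no tail of $(f_\alpha)$ sits in any polar, then for every $\alpha_0\in\Lambda$ and every zero neighborhood $U$ there are $\alpha\ge\alpha_0$ and a point of $U$ on which $\abs{f_\alpha}$ exceeds~$1$. Indexing these choices by $D=\Lambda\times\mathcal N$, where $\mathcal N$ is the family of zero neighborhoods of $X$ ordered by reverse inclusion and $D$ carries the product pre-order, I would assemble the net $(x_d)_{d\in D}$ of the selected points. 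Since each selected point lies in the neighborhood recorded in its own index, $x_d\to0$; hence $f_\alpha(x_d)\to0$ over $\Lambda\times D$ by continuous convergence. But the selection guarantees that beyond any pair of thresholds in $\Lambda$ and $D$ one can still find indices with $\abs{f_\alpha(x_d)}>1$, contradicting that convergence. Therefore some tail lies in a polar.

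I expect this ``bad net'' construction to be the main obstacle. The delicate point is arranging the index set $D$ and the accompanying selection so that, at once, the chosen points tend to zero (so the hypothesis $x_d\to0$ applies) while the coupled choices $\alpha\ge\alpha_0$ stay cofinal enough to defeat convergence of the double net over $\Lambda\times D$. Matching the two pre-orders so that a single threshold in $D$ simultaneously forces a small neighborhood and a large lower bound on $\alpha$ is precisely what makes the contradiction work.
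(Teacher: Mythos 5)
Your proposal is correct, and while your converse direction is the same routine $\varepsilon$-split as the paper's (the paper does not even bother reducing to $f=0$: it keeps $f$ and bounds $\abs{f_\alpha(x_\beta)-f(x)}\le\abs{f_\alpha(x_\beta-x)}+\abs{f_\alpha(x)-f(x)}$ exactly as you do), your forward direction takes a genuinely different route at the substantive step. The paper produces the polar \emph{directly}: since the zero-neighborhood filter $\mathcal N_0^X$ converges to~$0$, Proposition~\ref{tfb-net} yields a single net $(x_\beta)$ with tail filter $[x_\beta]=\mathcal N_0^X$ and $x_\beta\to 0$; one application of $f_\alpha\goesc f$ gives $\abs{f_\alpha(x_\beta)}\le 1$ on a product tail, and then the tail set $V=\{x_\beta\}_{\beta\ge\beta_0}$ is itself a zero neighborhood with $\{f_\alpha\}_{\alpha\ge\alpha_0}\subseteq V^\circ$. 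You instead argue by contradiction with a diagonal ``bad net'' over $D=\Lambda\times\mathcal N$, and your bookkeeping does close the argument: since the selected index $\alpha(d)$ dominates the first coordinate of $d=(\alpha_0,U)$, given any thresholds $\alpha_1\in\Lambda$ and $d_1=(\alpha_1',U_1)\in D$ you may take $d=(\alpha_0,U_1)$ with $\alpha_0$ above both $\alpha_1$ and $\alpha_1'$, so that $\alpha(d)\ge\alpha_1$, $d\ge d_1$, and $\abs{f_{\alpha(d)}(x_d)}>1$, defeating convergence of the double net; and $x_d\to 0$ holds because $d\ge(\alpha_\ast,W)$ forces $x_d\in U\subseteq W$. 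What each approach buys: yours is self-contained (it needs only directedness of $\mathcal N$ under reverse inclusion and the axiom of choice, not Proposition~\ref{tfb-net}) and makes the quantifier structure of ``no tail in any polar'' explicit; the paper's is shorter, exhibits the witnessing neighborhood $V$ concretely as a tail set, and showcases the net--filter dictionary (every filter is a tail filter of a net) that is the paper's main selling point. Your preliminary reduction to $f=0$ for the polar condition is also valid --- $f\in W^\circ$ for some zero neighborhood $W$ by continuity, and $U^\circ+W^\circ\subseteq 2(U\cap W)^\circ=\bigl(\tfrac12(U\cap W)\bigr)^\circ$ --- though, as the paper's proof shows, it is dispensable.
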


\begin{proof}
  Suppose $f_\alpha\goesc f$. It is immediate that
  $f_\alpha\goesws f$. Let $\mathcal N_0^X$ be the filter of all zero
  neighborhoods in~$X$.  Since $\mathcal N_0^X\to 0$, we can find a net
  $(x_\beta)$ in $X$ such that $x_\beta\to 0$ and
  $[x_\beta]=\mathcal N_0^X$. It follows that $f_\alpha(x_\beta)\to 0$.
  Then there exists $\alpha_0$ and $\beta_0$ such that
  $\bigabs{f_\alpha(x_\beta)}\le 1$ whenever $\alpha\ge\alpha_0$ and
  $\beta\ge\beta_0$. Set $V=\{x_\beta\}_{\beta\ge\beta_0}$ and observe
  $V\in\mathcal N_0^X$ and
  $\{f_\alpha\}_{\alpha\ge\alpha_0}\subseteq V^\circ$.

  Conversely, suppose that $f_\alpha\goesws f$ and a tail
  $\{f_\alpha\}_{\alpha\ge\alpha_0}$ is contained in $V^\circ$ for
  some $V\in\mathcal N_0^X$. Suppose $x_\beta\to x$, and let
  $\varepsilon>0$ be fixed. Find $\alpha_1\ge\alpha_0$ such that
  $\abs{f_\alpha(x)-f(x)}<\varepsilon$ whenever
  $\alpha\ge\alpha_1$, and find $\beta_1$ such that
  $x_\beta-x\in\varepsilon V$ for all $\beta\ge\beta_1$. Now
  $\alpha\ge\alpha_1$ and $\beta\ge\beta_1$ imply
  \begin{displaymath}
    \bigabs{f_\alpha(x_\beta)-f(x)}
    \le\bigabs{f_\alpha(x_\beta-x)}+\bigabs{f_\alpha(x)-f(x)}
    <2\varepsilon.
  \end{displaymath}
  Hence, $f_\alpha(x_\beta)\to f(x)$.
\end{proof}

\begin{proposition}\label{TVS-loc-comp}
  Let $X$ be a topological vector space. The polar of every zero
  neighborhood in $X$ is compact in $\mathcal L_cX$. Consequently,
  $\mathcal L_cX$ is locally compact.
\end{proposition}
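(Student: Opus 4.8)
The plan is to reduce the statement to the classical Banach--Alaoglu theorem together with Proposition~\ref{c-ws-pol}. Fix a zero neighborhood $V$ in $X$ and write $V^\circ$ for its polar. Recall from the discussion preceding the proposition that, for a topological vector space $X$, the underlying set of $\mathcal L_cX$ is exactly the topological dual $X^*$, so that Proposition~\ref{c-ws-pol} applies verbatim. Since $X$ is a topological vector space, the Banach--Alaoglu theorem guarantees that $V^\circ$ is compact in the weak*-topology $\sigma(X^*,X)$; see, e.g., \cite{Aliprantis:06}. Recalling that a subset $A$ of a convergence space is compact exactly when every net in $A$ has a quasi-subnet converging inside $A$, it suffices to take an arbitrary net $(f_\alpha)$ in $V^\circ$ and produce a quasi-subnet that converges in $\mathcal L_cX$ to a point of $V^\circ$.

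First I would use weak*-compactness of $V^\circ$ to extract a subnet $(g_\beta)$ of $(f_\alpha)$ with $g_\beta\goesws f$ for some $f\in V^\circ$; by Proposition~\ref{subnet-q-subnet} this subnet is in particular a quasi-subnet. Since every term of $(g_\beta)$ is a term of $(f_\alpha)$, the entire net $(g_\beta)$ lies in $V^\circ$; in particular a tail of $(g_\beta)$ is contained in $V^\circ$, which is the polar of the zero neighborhood $V$. Proposition~\ref{c-ws-pol} then applies directly: the weak* convergence $g_\beta\goesws f$ combined with a tail contained in the polar of a zero neighborhood yields $g_\beta\goesc f$. This exhibits the required convergent quasi-subnet and shows that $V^\circ$ is compact in $\mathcal L_cX$.

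For the ``consequently'' clause I would invoke the net characterization of local compactness, namely that every convergent net has a tail contained in a compact set. If $f_\alpha\goesc f$ in $\mathcal L_cX$, then Proposition~\ref{c-ws-pol} supplies a zero neighborhood $V$ such that a tail of $(f_\alpha)$ lies in $V^\circ$; by the first part $V^\circ$ is compact, so $\mathcal L_cX$ is locally compact.

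The genuine mathematical content is entirely supplied by Banach--Alaoglu; the only point requiring any care is that the equicontinuity condition ``a tail lies in the polar of a zero neighborhood'' is inherited by the quasi-subnet, which here is immediate because the quasi-subnet's terms are drawn from $V^\circ$. Thus I expect no real obstacle beyond correctly matching the weak* subnet produced by Banach--Alaoglu to the hypotheses of Proposition~\ref{c-ws-pol}.
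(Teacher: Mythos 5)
Your proof is correct and takes essentially the same route as the paper's: both use Banach--Alaoglu to get w*-compactness of $V^\circ$, upgrade a w*-convergent quasi-subnet to continuous convergence via Proposition~\ref{c-ws-pol}, and then apply Proposition~\ref{c-ws-pol} once more to deduce local compactness. The only cosmetic difference is that you extract a topological subnet and pass to a quasi-subnet via Proposition~\ref{subnet-q-subnet}, where the paper speaks of a quasi-subnet directly.
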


\begin{proof}
  Let $V\in\mathcal N_0^X$.  That $V^\circ$ is w*-compact is standard;
  see, e.g., page~139 in~\cite{Kelley:76}. Indeed, $V^\circ$ is
  w*-bounded and, since the w*-topology on $X^*$ is the restriction to
  $X^*$ of the product topology on $\mathbb K^X$, it is relatively
  w*-compact. Now $V^\circ$ is w*-closed implies it is w*-compact. It
  follows that every net $(f_\alpha)$ in $V^\circ$ has a w*-convergent
  quasi-subnet $g_\beta\goesws g$ for some $g\in
  V^\circ$. Proposition~\ref{c-ws-pol} gives $g_\beta\goesc g$,
  implying $V^\circ$ is compact in $\mathcal L_cX$.
  
  We have just shown that every continuously convergent net in
  $\mathcal L_cX$ has a tail inside a continuously compact set. An
  application of Proposition~\ref{c-ws-pol} gives $\mathcal L_cX$ is
  locally compact.
\end{proof}

\begin{corollary}\label{LcLc-lctvs}
  If $X$ is a topological vector space then $\mathcal L_c\mathcal
  L_cX$ is a locally convex topological vector space with the
  topology of uniform convergence on compact sets in $\mathcal L_cX$.
\end{corollary}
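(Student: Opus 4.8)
The plan is to combine the local compactness of $\mathcal L_cX$ with Corollary~\ref{loc-conv-ucc}. By Proposition~\ref{TVS-loc-comp}, $\mathcal L_cX$ is a locally compact convergence space; being a convergence vector space it is in particular a convergence space, so Corollary~\ref{loc-conv-ucc} applies with $\mathcal L_cX$ in place of~$X$. This yields that on $C(\mathcal L_cX)$ the continuous convergence structure coincides with uniform convergence on compacta, and that $C_c(\mathcal L_cX)$ is a locally convex topological vector space whose topology has the base of zero neighborhoods $V_{A,\varepsilon}$ indexed by compact subsets $A\subseteq\mathcal L_cX$ and $\varepsilon>0$.

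Next I would recall that $\mathcal L_c\mathcal L_cX$, the space of continuous linear functionals on $\mathcal L_cX$ under continuous convergence, is a closed subspace of $C_c(\mathcal L_cX)$, exactly as $\mathcal L_cX$ is a closed subspace of $C_c(X)$. A linear subspace of a locally convex topological vector space is again a locally convex topological vector space in the subspace topology, and the subspace convergence structure on $\mathcal L_c\mathcal L_cX$ is by definition the restriction of the continuous convergence on $C_c(\mathcal L_cX)$. Hence $\mathcal L_c\mathcal L_cX$ is a locally convex topological vector space.

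Finally I would identify the topology. Since the continuous convergence on $C_c(\mathcal L_cX)$ is the ucc convergence, its restriction to the subspace $\mathcal L_c\mathcal L_cX$ is precisely uniform convergence on compact subsets of $\mathcal L_cX$; equivalently, the subspace topology is generated by the traces of the sets $V_{A,\varepsilon}$ on $\mathcal L_c\mathcal L_cX$. This is exactly the claimed topology of uniform convergence on compact sets in $\mathcal L_cX$.

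The step requiring the most care is not any single computation but rather confirming that the hypotheses of Corollary~\ref{loc-conv-ucc} are genuinely met: one must invoke Proposition~\ref{TVS-loc-comp} to know $\mathcal L_cX$ is locally compact, and one must check that passing to the linear subspace $\mathcal L_c\mathcal L_cX$ preserves both the topological (hence locally convex TVS) structure and the identification of the convergence with uniform convergence on compacta. Each of these is routine given the earlier results, so the corollary is essentially immediate.
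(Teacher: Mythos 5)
Your proposal is correct and follows essentially the same route as the paper: invoke Proposition~\ref{TVS-loc-comp} for local compactness of $\mathcal L_c X$, apply Corollary~\ref{loc-conv-ucc} to $C_c(\mathcal L_c X)$, and pass to $\mathcal L_c\mathcal L_c X$ as a subspace. Your additional remarks (closedness of the subspace, the explicit neighborhood base $V_{A,\varepsilon}$) are accurate elaborations of the paper's three-line argument rather than a different method.
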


\begin{proof}
  By Proposition~\ref{TVS-loc-comp}, $\mathcal L_cX$ is locally
  compact. Corollary~\ref{loc-conv-ucc} implies that
  $C_c(\mathcal L_cX)$ is a locally convex topological vector space
  with the topology of uniform convergence on compact sets in
  $\mathcal L_cX$. The conclusion now follows from the fact that
  $\mathcal L_c\mathcal L_cX$ is a subspace of $C_c(\mathcal L_cX)$.
\end{proof}

\begin{proposition}\label{c-ucc-polar}
  Let $X$ be a topological vector space and $(\varphi_\alpha)$ a net in
  $\mathcal L_c\mathcal L_cX$. Then $\varphi_\alpha\goesc 0$ iff
  $(\varphi_\alpha)$ converges to zero uniformly on $V^\circ$ for every
  zero neighborhood $V$ in~$X$.
\end{proposition}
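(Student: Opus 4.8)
The plan is to prove the equivalence by unwinding both sides to concrete statements about polars and exploiting the two structural results already established: Corollary~\ref{LcLc-lctvs}, which tells us that $\mathcal L_c\mathcal L_cX$ carries the topology of uniform convergence on compact subsets of $\mathcal L_cX$, and Proposition~\ref{TVS-loc-comp}, which identifies a rich supply of such compact sets, namely the polars $V^\circ$ of zero neighborhoods $V$ in~$X$. By linearity of continuous convergence we may immediately reduce to the stated case where the limit is~$0$.

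First I would handle the forward direction. Assume $\varphi_\alpha\goesc 0$ in $\mathcal L_c\mathcal L_cX$. Fix a zero neighborhood $V$ in~$X$. By Proposition~\ref{TVS-loc-comp}, $V^\circ$ is a compact subset of $\mathcal L_cX$. Corollary~\ref{LcLc-lctvs} tells us that $\mathcal L_c\mathcal L_cX$ carries the ucc topology with respect to compact subsets of $\mathcal L_cX$, so $\varphi_\alpha\goesucc 0$ means precisely that $(\varphi_\alpha)$ converges to zero uniformly on every compact set; applying this to the compact set $V^\circ$ gives uniform convergence to zero on $V^\circ$, as required.

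For the converse, suppose $(\varphi_\alpha)$ converges to zero uniformly on $V^\circ$ for every zero neighborhood $V$ in~$X$. The goal is to upgrade this to $\varphi_\alpha\goesucc 0$, i.e. uniform convergence on \emph{every} compact subset $K$ of $\mathcal L_cX$, since by Corollary~\ref{LcLc-lctvs} this is exactly continuous convergence to~$0$. The main obstacle is therefore to show that every continuously compact subset of $\mathcal L_cX$ is contained in some polar $V^\circ$; once this is known, uniform convergence on the larger set $V^\circ$ forces uniform convergence on $K$. To establish this containment, let $K\subseteq\mathcal L_cX$ be compact and invoke the local compactness of $\mathcal L_cX$ from Proposition~\ref{TVS-loc-comp}: the proof of that proposition shows that continuously convergent nets have tails inside polars $V^\circ$, and more to the point, the description of continuous convergence in Proposition~\ref{c-ws-pol} shows that continuous convergence to any limit entails that a tail lands in some $V^\circ$. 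I expect that a compact $K$ is equicontinuous, which is precisely the statement that $K$ is absorbed by (indeed contained in) a single polar $V^\circ$; this is the analogue of the Banach--Alaoglu/equicontinuity principle in the convergence-space setting, and it is where the real work lies.

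Concretely, to show $K\subseteq V^\circ$ for some $V\in\mathcal N_0^X$, I would argue by contradiction: if no polar contained $K$, one could build a net in $K$ witnessing failure of equicontinuity, extract a continuously convergent quasi-subnet using compactness of $K$, and then apply Proposition~\ref{c-ws-pol} to the limit to conclude that a tail of this quasi-subnet lies in some $V^\circ$ --- contradicting the choice of the original net. With $K\subseteq V^\circ$ in hand, the hypothesis gives uniform convergence of $(\varphi_\alpha)$ to zero on $V^\circ$, hence on $K$; since $K$ was an arbitrary compact set, $\varphi_\alpha\goesucc 0$, and Corollary~\ref{LcLc-lctvs} yields $\varphi_\alpha\goesc 0$, completing the proof.
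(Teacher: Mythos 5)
Your proof is correct, but the converse follows a genuinely different route from the paper's. The paper never mentions compact sets in the backward direction: it simply tests the definition of continuous convergence against an arbitrary continuously convergent net $(f_\beta)$ in $\mathcal L_cX$, uses Proposition~\ref{c-ws-pol} to place a tail of $(f_\beta)$ inside a single polar $V^\circ$, and then combines indices to get $\bigabs{\varphi_\alpha(f_\beta)}<\varepsilon$ eventually --- a two-line argument. You instead route everything through Corollary~\ref{LcLc-lctvs} (continuous convergence $=$ ucc convergence on $\mathcal L_c\mathcal L_cX$), which forces you to prove the auxiliary lemma that every compact $K\subseteq\mathcal L_cX$ is contained in some polar $V^\circ$. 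That lemma is true, and your contradiction sketch does work, though one step deserves to be made explicit: indexing the witnesses by zero neighborhoods under reverse inclusion, with $f_V\in K\setminus V^\circ$, the monotonicity $V\subseteq W\Rightarrow W^\circ\subseteq V^\circ$ gives $f_V\notin W^\circ$ for all $V\subseteq W$, so the entire tail of the net past $W$ avoids $W^\circ$; extracting a continuously convergent quasi-subnet via compactness, Proposition~\ref{c-ws-pol} puts one of its tails inside some $W^\circ$, while the quasi-subnet property puts another of its tails inside the set avoiding $W^\circ$, and since any two tails of a net meet (the index set is directed), this is the contradiction. The trade-off: the paper's argument is shorter and more elementary, while yours establishes a stronger structural fact of independent interest --- compact subsets of $\mathcal L_cX$ are equicontinuous, so the polars are cofinal among compacta and the ucc topology $\tau^{**}$ on $\mathcal L_c\mathcal L_cX$ is exactly the topology of uniform convergence on polars, a description that is then used implicitly in the proof of Theorem~\ref{LcX-refl}.
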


\begin{proof}
  If $\varphi_\alpha\goesc 0$ then it converges to zero uniformly on
  compact sets by Corollary~\ref{loc-conv-ucc}, and $V^\circ$ is
  compact by Proposition~\ref{TVS-loc-comp}. To prove the converse,
  suppose that $(\varphi_\alpha)$ converges to zero uniformly on
  polars of zero neighborhoods in~$X$. Let $(f_\beta)$ be a
  continuously convergent net in $\mathcal L_cX$, and let
  $\varepsilon>0$ be arbitrary. Proposition~\ref{c-ws-pol} guarantees
  that there exists a zero neighborhood $V$ in $X$ and an index
  $\beta_0$ such that $f_\beta\in V^\circ$ for all
  $\beta\ge\beta_0$. Since $(\varphi_\alpha)$ converges to zero
  uniformly on $V^\circ$, we can find $\alpha_0$ such that
  $\bigabs{\varphi_\alpha(f)}<\varepsilon$ for all $\alpha\ge\alpha_0$
  and $f\in V^\circ$. It follows that
  $\bigabs{\varphi_\alpha(f_\beta)}<\varepsilon$ for all
  $\alpha\ge\alpha_0$ and $\beta\ge\beta_0$.
\end{proof}

Let $X$ be a convergence vector space. For every $x\in X$, the
evaluation map $\hat x\colon\mathcal L_cX\to\mathbb K$ given by
$\hat x(f)=f(x)$ is continuous, hence
$\hat x\in\mathcal L_c\mathcal L_cX$. This map is clearly linear.  It
follows easily from the definition of continuous convergence that the
map $j\colon X\to\mathcal L_c\mathcal L_cX$ given by $j(x)=\hat x$ is
continuous. We say that $X$ is \term{reflexive} if $j$ is an
isomorphism. Since $\mathcal L_cX$ is complete for every convergence
vector space~$X$, we immediately get the following:

\begin{proposition}
  Every reflexive convergence vector space is complete.
\end{proposition}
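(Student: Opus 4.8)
The plan is to exploit the isomorphism $j$ together with the completeness of iterated duals. The discussion preceding the proposition records that $\mathcal L_c Y$ is complete for \emph{every} convergence vector space $Y$: indeed, $\mathcal L_c Y$ is a closed subspace of $C_c(Y)$, which is complete by Proposition~\ref{Cc-complete}. Applying this with $Y=\mathcal L_c X$ shows that $\mathcal L_c\mathcal L_c X$ is complete. Since $X$ is reflexive, the evaluation map $j\colon X\to\mathcal L_c\mathcal L_c X$ is an isomorphism of convergence vector spaces, so it remains only to check that completeness is transported across such isomorphisms.

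To this end I would prove the following general fact: if $T\colon X\to Y$ is an isomorphism of convergence vector spaces and $Y$ is complete, then $X$ is complete. Let $(x_\alpha)_{\alpha\in\Lambda}$ be a Cauchy net in $X$, so that $(x_\alpha-x_\beta)_{(\alpha,\beta)\in\Lambda\times\Lambda}\to 0$. Because $T$ is linear and continuous, the image net satisfies $T(x_\alpha-x_\beta)=Tx_\alpha-Tx_\beta\to 0$, which is precisely the statement that $(Tx_\alpha)$ is Cauchy in $Y$. Completeness of $Y$ yields some $y\in Y$ with $Tx_\alpha\to y$, and continuity of $T^{-1}$ then gives $x_\alpha=T^{-1}(Tx_\alpha)\to T^{-1}(y)$. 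Thus $(x_\alpha)$ converges and $X$ is complete. Taking $T=j$ finishes the proof.

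The only subtlety I anticipate is set-theoretic rather than analytic: the difference net is indexed by the product $\Lambda\times\Lambda$, and its image $(Tx_\alpha-Tx_\beta)$ need not be admissible in $Y$. This is harmless, since continuity is phrased so that the image is tail equivalent to an admissible convergent net, and both the Cauchy condition and completeness depend only on tail behaviour, i.e., they are invariant under tail equivalence. Consequently one may work throughout with the associated equivalence classes in $\mathfrak N(X)/\!\!\sim$, and no genuine obstacle arises.
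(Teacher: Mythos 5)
Your proof is correct and follows exactly the route the paper intends: the paper's entire argument is the remark that $\mathcal L_c Y$ is complete for every convergence vector space $Y$ (being a closed subspace of $C_c(Y)$, which is complete by Proposition~\ref{Cc-complete}), applied with $Y=\mathcal L_cX$ and transported through the isomorphism~$j$. You have merely made explicit the routine transfer lemma for isomorphisms and the admissibility point, both of which the paper leaves implicit.
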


\begin{proposition}\label{Lc-embeds-LcLcLc}
  Let $X$ be a convergence vector space. The map $f\mapsto \hat f$ is
  an isomorphic embedding of $\mathcal L_cX$ into $\mathcal
  L_c\mathcal L_c\mathcal L_cX$.
\end{proposition}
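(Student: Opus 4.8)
The plan is to recognize the map $f\mapsto\hat f$ as the canonical embedding of the dual space $\mathcal L_cX$ into \emph{its own} double dual, and then to exhibit an explicit continuous left inverse. Write $X'=\mathcal L_cX$, so that $X''=\mathcal L_c\mathcal L_cX$ and $X'''=\mathcal L_c\mathcal L_c\mathcal L_cX$. Under these identifications, $\hat f(\Psi)=\Psi(f)$ for $\Psi\in X''$, which is exactly the value $j_{X'}(f)$ of the canonical embedding $j_{X'}\colon X'\to X'''$. Applying the observation preceding the proposition (that $j\colon Z\to\mathcal L_c\mathcal L_cZ$ is continuous for every convergence vector space~$Z$) to $Z=X'$ shows that $j_{X'}$ is continuous. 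It therefore remains to produce a continuous map $X'''\to X'$ that is a left inverse of $j_{X'}$ and that agrees with $(j_{X'})^{-1}$ on $\Range(j_{X'})$.

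First I would introduce the transpose of the canonical embedding $j_X\colon X\to X''$. For $\Phi\in X'''=\mathcal L_c(X'')$, set $j_X^*(\Phi)=\Phi\circ j_X$; since $\Phi$ and $j_X$ are continuous and linear, $\Phi\circ j_X$ is a continuous linear functional on $X$, so $j_X^*$ is a well-defined linear map $X'''\to X'$. The key computation is the triangle identity: for $f\in X'$ and $x\in X$ one has $(j_X^*(j_{X'}(f)))(x)=j_{X'}(f)(\hat x)=\hat x(f)=f(x)$, whence $j_X^*\circ j_{X'}=\mathrm{id}_{X'}$. In particular $j_{X'}$ is injective, and its inverse on the range coincides with the restriction of $j_X^*$ to $\Range(j_{X'})$.

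Next I would verify that $j_X^*$ is continuous for the continuous convergence structures. Suppose $\Phi_\alpha\goesc\Phi$ in $X'''$ and let $x_\beta\to x$ in~$X$. Since $j_X$ is continuous, $\hat{x_\beta}=j_X(x_\beta)\to j_X(x)=\hat x$ in $X''$; the definition of continuous convergence of $(\Phi_\alpha)$ then yields $\Phi_\alpha(\hat{x_\beta})\to\Phi(\hat x)$ in~$\mathbb K$, that is, $(j_X^*\Phi_\alpha)(x_\beta)\to(j_X^*\Phi)(x)$. As this holds for every convergent net in~$X$, we conclude $j_X^*\Phi_\alpha\goesc j_X^*\Phi$, so $j_X^*$ is continuous.

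Finally I would assemble the pieces: $j_{X'}$ is a continuous linear injection, and its inverse on $\Range(j_{X'})$ is the restriction of the continuous map $j_X^*$, hence continuous. Therefore $j_{X'}$ is an isomorphism onto its range, i.e., $f\mapsto\hat f$ is an isomorphic embedding of $\mathcal L_cX$ into $\mathcal L_c\mathcal L_c\mathcal L_cX$. The only point requiring any care is the continuity of the transpose $j_X^*$, but this reduces immediately to the continuity of $j_X$ together with the definition of continuous convergence, so no serious obstacle arises.
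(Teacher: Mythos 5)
Your proof is correct, and at bottom it is the paper's argument repackaged: the paper proves injectivity from the evaluation identity $\hat f(\hat x)=f(x)$ and then checks continuity of the inverse directly on the range --- if $\hat f_\alpha\goesc 0$ in $\mathcal L_c\mathcal L_c\mathcal L_cX$ and $x_\beta\to 0$ in~$X$, then $\hat x_\beta\goesc 0$ by continuity of $j_X$, so $f_\alpha(x_\beta)=\hat f_\alpha(\hat x_\beta)\to 0$, i.e., $f_\alpha\goesc 0$. Your triangle identity $j_X^*\circ j_{X'}=\mathrm{id}_{\mathcal L_cX}$ is that same evaluation identity, and your verification that the transpose $j_X^*$ is continuous is the identical net computation with an arbitrary $\Phi_\alpha$ in place of $\hat f_\alpha$. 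Still, the decomposition is genuinely different and buys something: your left inverse is defined and continuous on the \emph{whole} triple dual, so $j_{X'}\circ j_X^*$ is a continuous linear projection of $\mathcal L_c\mathcal L_c\mathcal L_cX$ onto the embedded copy of $\mathcal L_cX$ --- a formally stronger conclusion than the paper's, obtained by the standard categorical triangle-identity trick, which transfers verbatim to any adjoint pair of canonical maps. It also dovetails nicely with Theorem~\ref{LcX-refl}, where $f\mapsto\hat f$ is shown to be onto for topological~$X$; your $j_X^*$ is then the two-sided inverse. The only extra obligation your route incurs, which you discharge correctly, is checking that $j_X^*\Phi=\Phi\circ j_X$ is itself a continuous linear functional on~$X$ (composition of continuous maps, Proposition~\ref{bas}), a point the paper's range-only argument never needs to address.
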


\begin{proof}
  We already know that this map is linear and continuous.  It is
  one-to-one: if $\hat f=0$ then $0=\hat f(\hat x)=f(x)$ for every
  $x\in X$, so that $f=0$. Finally, suppose that
  $\hat f_\alpha\goesc 0$ in $\mathcal L_c\mathcal L_c\mathcal
  L_cX$. Let $x_\beta\to 0$ in~$X$. Then $\hat x_\beta\goesc 0$ in
  $\mathcal L_c\mathcal L_cX$, so that
  $f_\alpha(x_\beta)=\hat f_\alpha(\hat x_\beta)\to 0$. Therefore,
  $f_\alpha\goesc 0$ in $\mathcal L_cX$.
\end{proof}

For every convergence vector space~$X$, one may view 
$\langle \mathcal L_cX,\mathcal L_c\mathcal L_cX\rangle$ as a dual
pair of vector spaces for every convergence vector space $X$ in the
sense of pp.~144--145 in~\cite{Aliprantis:06}. We will make use of the
classical Mackey-Arens Theorem (see Theorem~3.24 on p.~150
of~\cite{Aliprantis:06}).

\begin{theorem}[Mackey-Arens]
  Let $\langle X,Y\rangle$ be a dual pair of vector spaces and $\tau$
  a locally convex topology on~$X$. Then the dual of $(X,\tau)$ is
  $Y$ iff $\tau$ is the topology of uniform convergence on some
  collection of convex circled $\sigma(Y,X)$-compacts subsets of~$Y$.
\end{theorem}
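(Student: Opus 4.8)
The plan is to prove both implications of the biconditional using the standard machinery of polars, together with the two classical inputs that govern the interplay between a compatible topology on $X$ and the $\sigma(Y,X)$-compact subsets of $Y$: the \emph{Alaoglu--Bourbaki theorem} (polars of neighborhoods are weak* compact) and the \emph{bipolar theorem} (a convex circled weakly closed set equals its own bipolar). Throughout, I would write $A^\circ$ for the polar of $A$, keeping careful track of which of the pairings $\langle X,Y\rangle$ and $\langle X^*,X\rangle$ each polar is taken in, since this is where the argument is most delicate.

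For the forward direction I would assume $(X,\tau)'=Y$ and construct the required family explicitly. First I would fix a base $\mathcal U$ of convex, circled, $\tau$-closed neighborhoods of zero and set $\mathcal S=\{U^\circ\mid U\in\mathcal U\}$. Each polar $U^\circ$ is automatically convex and circled. The key point is that $U^\circ$ is $\sigma(Y,X)$-compact: since $U$ is a $\tau$-neighborhood, $U^\circ$ is an equicontinuous subset of the algebraic dual $X^*$, hence $\sigma(X^*,X)$-compact by Alaoglu--Bourbaki; because $(X,\tau)'=Y$, every equicontinuous functional already lies in $Y$, so $U^\circ\subseteq Y$, and as $\sigma(Y,X)$ is the restriction of $\sigma(X^*,X)$ to $Y$ this gives $\sigma(Y,X)$-compactness. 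Finally, the topology of uniform convergence on $\mathcal S$ has the bipolars $(U^\circ)^\circ=U^{\circ\circ}=U$ as a neighborhood base, by the bipolar theorem together with the fact that $U$ is convex, circled and $\sigma(X,Y)$-closed; hence this polar topology is exactly~$\tau$.

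For the converse I would assume $\tau$ is the topology of uniform convergence on a covering collection $\mathcal S$ of convex, circled, $\sigma(Y,X)$-compact subsets of $Y$ and show $(X,\tau)'=Y$. The inclusion $Y\subseteq(X,\tau)'$ is the easy half: since $\mathcal S$ covers $Y$, the topology $\tau$ refines $\sigma(X,Y)$, so every $y\in Y$ is $\tau$-continuous. For the reverse inclusion I would take any $\tau$-continuous $f\in X^*$ and find a basic $\tau$-neighborhood on which $\abs{f}\le 1$; such a neighborhood is a finite intersection of sets $\varepsilon S^\circ$ with $S\in\mathcal S$, which is itself the polar $(S')^\circ$ of a single convex circled $\sigma(Y,X)$-compact set $S'$ (the convex circled hull of a finite union of scaled members, compact as a continuous image of a compact set). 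Boundedness of $f$ on $(S')^\circ$ gives $f\in (S')^{\circ\circ}$ in the pairing $\langle X^*,X\rangle$; by the bipolar theorem this bipolar is the $\sigma(X^*,X)$-closed convex circled hull of $S'$, which equals $S'$ because $S'$ is already convex, circled and $\sigma(X^*,X)$-compact (hence closed). Therefore $f\in S'\subseteq Y$.

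I expect the main obstacle to be the bookkeeping of \emph{which} pairing each polar is computed in, and in particular the identification of the equicontinuous sets of $(X,\tau)$ with the convex circled $\sigma(Y,X)$-compact subsets of $Y$. The two substantive inputs, Alaoglu--Bourbaki and the bipolar theorem, are standard; the real work is ensuring that the compactness first obtained in the large space $X^*$ descends to $\sigma(Y,X)$-compactness in $Y$, which hinges precisely on the hypothesis $(X,\tau)'=Y$ in the forward direction and on the covering hypothesis in the converse.
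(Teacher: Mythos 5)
You should know at the outset that the paper does not prove this statement at all: it is imported as a classical result, quoted verbatim before Theorem~\ref{LcX-refl} with a citation to Theorem~3.24 on p.~150 of~\cite{Aliprantis:06}, so there is no in-paper proof to compare against. Your argument is the standard textbook proof (essentially the one in the cited source), and it is sound: the forward direction correctly combines equicontinuity of $U^\circ$, Alaoglu--Bourbaki in the algebraic dual $X^*$, the descent of compactness from $\sigma(X^*,X)$ to $\sigma(Y,X)$ via the hypothesis $(X,\tau)'=Y$, and the bipolar theorem to recover $U=U^{\circ\circ}$; the converse correctly reduces a basic neighborhood to the polar of a single convex circled compact set $S'$ and runs the bipolar theorem in the pairing $\langle X^*,X\rangle$ to force $f\in S'\subseteq Y$. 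Two small points deserve flagging. First, you quietly inserted the hypothesis that $\mathcal S$ \emph{covers} $Y$ in the converse; this is genuinely needed (with $\mathcal S=\{\{0\}\}$ the $\mathcal S$-topology is indiscrete, hence locally convex, but its dual is $\{0\}$), and it appears in the full formulation in~\cite{Aliprantis:06} even though the paper's paraphrase omits it --- so your version repairs, rather than deviates from, the statement as quoted; for symmetry you could also note that your family $\{U^\circ\mid U\in\mathcal U\}$ in the forward direction automatically covers $Y$, since every $y\in Y=(X,\tau)'$ is bounded on some $U\in\mathcal U$. Second, your assertion that a convex circled $\tau$-closed $U$ is $\sigma(X,Y)$-closed is correct but is itself a Hahn--Banach separation argument that uses $(X,\tau)'=Y$; since that identification is exactly the hypothesis in play, it is worth one explicit sentence rather than being folded silently into the bipolar step.
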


\begin{theorem}\label{LcX-refl}
  If $X$ is a topological vector space then $\mathcal L_cX$ is reflexive.
\end{theorem}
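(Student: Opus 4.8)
The plan is to reduce reflexivity of $Z:=\mathcal L_cX$ to a surjectivity statement and then identify the relevant dual via the Mackey--Arens theorem. Since $X$ is in particular a convergence vector space, Proposition~\ref{Lc-embeds-LcLcLc} tells us that the canonical map $j\colon Z\to\mathcal L_c\mathcal L_c\mathcal L_cX=\mathcal L_c\mathcal L_cZ$, $f\mapsto\hat f$, is already an isomorphic embedding. Thus it suffices to prove that $j$ is onto, for an isomorphic embedding that is surjective is an isomorphism, which is exactly what reflexivity of $Z$ demands. Set $W:=\mathcal L_cZ=\mathcal L_c\mathcal L_cX$. By Corollary~\ref{LcLc-lctvs}, $W$ is a locally convex topological vector space whose topology $\tau_W$ is that of uniform convergence on the compact subsets of $Z$. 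Because $W$ is topological, net-continuity of a linear functional on $W$ coincides with $\tau_W$-continuity, so as a vector space $\mathcal L_cW=\mathcal L_c\mathcal L_cZ$ is exactly the topological dual $W^*$ of $(W,\tau_W)$. Under this identification $j(Z)\subseteq W^*$, and surjectivity of $j$ becomes the assertion $W^*=j(Z)$; that is, every $\tau_W$-continuous linear functional on $W$ is evaluation at some point of $Z$.

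To pin down $W^*$ I would apply the Mackey--Arens theorem to the dual pair $\langle W,Z\rangle$ under the pairing $\langle\Phi,f\rangle=\Phi(f)$. One first checks this is a genuine dual pair: $Z$ separates $W$ since $W$ consists of functions on $Z$, and $W$ separates $Z$ because it already contains all evaluations $\hat x$ ($x\in X$), which separate $Z=X^*$. Mackey--Arens then yields $W^*=Z$ provided $\tau_W$ is the topology of uniform convergence on some family of convex circled $\sigma(Z,W)$-compact subsets of $Z$. The natural candidates are the polars $V^\circ$ of zero neighborhoods $V$ in $X$: these are convex and circled, and they are compact in $Z$ by Proposition~\ref{TVS-loc-comp}. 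Moreover, since every $\Phi\in W=\mathcal L_cZ$ is continuous, the identity map $(Z,\goesc)\to(Z,\sigma(Z,W))$ is continuous, so each $V^\circ$, being compact in $Z$, is also $\sigma(Z,W)$-compact. Thus it remains to show that $\tau_W$ coincides with the topology of uniform convergence on the family $\{V^\circ\}$.

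The crux, and the step I expect to be the main obstacle, is the following lemma: \emph{every compact subset $K$ of $Z=\mathcal L_cX$ is contained in a polar $V^\circ$} (equivalently, $K$ is equicontinuous). Granting this, the polars are cofinal among the compact sets of $Z$, so uniform convergence on $\{V^\circ\}$ agrees with uniform convergence on all compacta, which is $\tau_W$; Mackey--Arens then gives $W^*=Z=j(Z)$ and the theorem follows. I would prove the lemma by contradiction using continuous convergence directly. If $K$ were not equicontinuous there would be $\varepsilon_0>0$ so that for each $V\in\mathcal N_0^X$ one can pick $f_V\in K$ and $x_V\in V$ with $\bigabs{f_V(x_V)}>\varepsilon_0$; ordering the $V$ by reverse inclusion makes $(f_V)$ a net in $K$ and forces $x_V\to0$ in $X$. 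Compactness of $K$ supplies a quasi-subnet along which $f_V\goesc f$ for some $f\in K$, and the matching subnet of $(x_V)$ still converges to $0$. The definition of continuous convergence then makes the product net $\bigl(f_V(x_{V'})\bigr)$ converge to $f(0)=0$; passing to the co-final diagonal gives $f_V(x_V)\to0$, contradicting $\bigabs{f_V(x_V)}>\varepsilon_0$.

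Putting the pieces together, the lemma shows $\tau_W$ is the topology of uniform convergence on the convex circled $\sigma(Z,W)$-compact polars, Mackey--Arens identifies $W^*$ with $Z$ through the evaluation pairing, hence $j$ is surjective, and combined with the isomorphic embedding of Proposition~\ref{Lc-embeds-LcLcLc} we conclude that $j$ is an isomorphism; that is, $\mathcal L_cX$ is reflexive. The only genuinely new work is the equicontinuity lemma; everything else is assembly of results already established in this section together with the classical Mackey--Arens theorem.
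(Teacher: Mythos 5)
Your proof is correct, and its skeleton is the same as the paper's: reduce reflexivity to surjectivity of $f\mapsto\hat f$ via Proposition~\ref{Lc-embeds-LcLcLc}, use Corollary~\ref{LcLc-lctvs} to regard $\mathcal L_c\mathcal L_cX$ as a locally convex topological vector space (so that its dual in the convergence sense coincides with its topological dual), and apply Mackey--Arens to the dual pair $\langle\mathcal L_c\mathcal L_cX,\mathcal L_cX\rangle$ with the polars $V^\circ$ as the witnessing family of convex circled weakly compact sets. The one point of divergence is how you show that the topology of uniform convergence on compacta agrees with that of uniform convergence on polars: the paper simply quotes Proposition~\ref{c-ucc-polar}, which was proved earlier for exactly this purpose, whereas you prove from scratch the equicontinuity lemma that every compact subset of $\mathcal L_cX$ lies in some $V^\circ$, making the polars cofinal among the compacta. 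Your lemma is true and your diagonal argument is sound; the only step you gloss is ``the matching subnet of $(x_V)$'': a quasi-subnet carries no index map, so one should first invoke Proposition~\ref{subnet-q-subnet} to replace it by a tail-equivalent genuine subnet $(f_{\varphi(\beta)})_{\beta\in B}$, set $y_\beta=x_{\varphi(\beta)}$ (a subnet of $(x_V)$, hence null), and then use co-finality of the diagonal in $B\times B$ together with (N2) to get $f_{\varphi(\beta)}(y_\beta)\to f(0)=0$, the desired contradiction. It is also worth noting that your lemma is essentially a ``global'' version of Proposition~\ref{c-ws-pol} and follows from it in a few lines: choosing $f_V\in K\setminus V^\circ$ and passing as above to a continuously convergent subnet $(f_{\varphi(\beta)})$, Proposition~\ref{c-ws-pol} puts a tail of it inside some $W^\circ$, while eventually $\varphi(\beta)\subseteq W$, i.e., $W^\circ\subseteq\varphi(\beta)^\circ$, contradicting $f_{\varphi(\beta)}\notin\varphi(\beta)^\circ$. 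So the paper's route is more economical, reusing Propositions~\ref{c-ws-pol} and~\ref{c-ucc-polar}, while yours isolates a statement of independent interest --- compact subsets of $\mathcal L_cX$ are equicontinuous --- at the cost of re-proving machinery already available in the section.
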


\begin{proof}
  By Proposition~\ref{Lc-embeds-LcLcLc} it suffices to show that the
  map $f\mapsto \hat f$ is onto. Corollary~\ref{LcLc-lctvs} implies
  $\mathcal L_c\mathcal L_cX$ is a locally convex topological vector
  space; denote its topology by~$\tau^{**}$.  Consider the dual pair
  $\langle\mathcal L_c\mathcal L_cX,\mathcal L_cX\rangle$.  If
  $f_\alpha\goesc 0$ in $\mathcal L_cX$ then $\varphi(f_\alpha)\to 0$
  for every $\varphi\in\mathcal L_c\mathcal L_cX$, hence, $(f_\alpha)$
  converges to zero in
  $\sigma(\mathcal L_cX,\mathcal L_c\mathcal L_cX)$. It follows that
  the identity map on $\mathcal L_cX$ is continuous as a map from the
  continuous convergence structure to
  $\sigma(\mathcal L_cX,\mathcal L_c\mathcal L_cX)$.

  Let $V$ be a zero neighborhood in~$X$. Then $V^\circ$ is convex and
  circled. Furthermore, $V^\circ$ is compact in $\mathcal L_cX$ by
  Proposition~\ref{TVS-loc-comp}. By the preceding paragraph,
  $V^\circ$ is
  $\sigma(\mathcal L_cX,\mathcal L_c\mathcal L_cX)$-compact.  Let
  $\mathcal S$ be the set of all polars of zero neighborhoods
  in~$X$. Then $\mathcal S$ is a set of convex, circled, and
  $\sigma(\mathcal L_cX,\mathcal L_c\mathcal L_cX)$-compact subsets of
  $\mathcal L_cX$. By Proposition~\ref{c-ucc-polar}, $\tau^{**}$ is
  the topology of uniform convergence on sets in~$\mathcal S$. By
  the Mackey-Arens Theorem, the dual of
  $(\mathcal L_c\mathcal L_cX,\tau^{**})$ is $\mathcal L_cX$.

  Since the convergence on $\mathcal L_c\mathcal L_cX$ is topological,
  its continuous and topological dual agree as sets. It follows that
  $\mathcal L_c\mathcal L_c\mathcal L_cX$ and $\mathcal L_cX$ agree as
  sets. This means that the map $f\mapsto \hat f$ is onto.
\end{proof}

\begin{theorem}
  Let $(X,\tau)$ be a locally convex topological vector space. Then
   $\mathcal L_c\mathcal L_cX$ is the completion of~$X$. In
   particular, $X$ is reflexive iff it is complete.
\end{theorem}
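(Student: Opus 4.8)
The plan is to realize $X$, through the canonical evaluation map $j\colon X\to\mathcal L_c\mathcal L_cX$ with $j(x)=\hat x$, as a dense subspace of the complete locally convex space $\mathcal L_c\mathcal L_cX$, and thereby identify the latter with the completion of $X$. Throughout I assume $X$ is Hausdorff, as is implicit when one speaks of a completion; this guarantees that $\mathcal L_cX=X^*$ separates the points of $X$, so that $j$ is injective. Three things must then be verified: that $\mathcal L_c\mathcal L_cX$ is a complete locally convex topological vector space, that $j$ is a topological embedding, and that $j(X)$ is dense.

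For completeness, I would invoke Corollary~\ref{LcLc-lctvs}, by which $\mathcal L_c\mathcal L_cX$ is a locally convex topological vector space; denote its topology by $\tau^{**}$. Since $\mathcal L_c\mathcal L_cX=\mathcal L_c(\mathcal L_cX)$ is a closed subspace of $C_c(\mathcal L_cX)$, which is complete by Proposition~\ref{Cc-complete}, it is itself complete, and as its convergence is topological it is complete as a topological vector space. Next I would show that $j$ is a homeomorphism onto its image. By Proposition~\ref{c-ucc-polar} together with the local compactness of $\mathcal L_cX$ (Proposition~\ref{TVS-loc-comp}), the topology $\tau^{**}$ admits a base of zero-neighborhoods of the form $W_{V,\varepsilon}=\{\varphi\mid\abs{\varphi(f)}\le\varepsilon\text{ for all }f\in V^\circ\}$, where $V$ ranges over the zero-neighborhoods of $X$ and $\varepsilon>0$. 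Pulling these back along $j$ gives $j^{-1}(W_{V,\varepsilon})=\{x\mid\abs{f(x)}\le\varepsilon\text{ for all }f\in V^\circ\}=\varepsilon V^{\circ\circ}$. By the bipolar theorem $V^{\circ\circ}$ is the closed absolutely convex hull of $V$; letting $V$ range over a base of closed absolutely convex zero-neighborhoods of the locally convex space $X$ yields $V^{\circ\circ}=V$, so the sets $\varepsilon V$ form a base for $\tau$. Hence $\tau^{**}$ pulls back along $j$ to $\tau$, and $j$ is a topological embedding.

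Finally, the proof of Theorem~\ref{LcX-refl} shows that the continuous dual of $(\mathcal L_c\mathcal L_cX,\tau^{**})$ is $\mathcal L_cX$, under the pairing $\langle\varphi,f\rangle=\varphi(f)$. If $f\in\mathcal L_cX$ annihilates $j(X)$, then $f(x)=\hat x(f)=0$ for every $x\in X$, so $f=0$; by the Hahn-Banach theorem $j(X)$ is dense in $\mathcal L_c\mathcal L_cX$. Combining the three points, $\mathcal L_c\mathcal L_cX$ is a complete locally convex space into which $X$ embeds as a dense subspace, which is exactly the completion of~$X$.

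For the final assertion, $X$ is reflexive precisely when $j$ is an isomorphism; since $j$ is always a dense topological embedding into the complete space $\mathcal L_c\mathcal L_cX$, this happens iff $j(X)=\mathcal L_c\mathcal L_cX$, that is, iff the dense subspace $j(X)\cong X$ is closed, equivalently complete. Thus $X$ is reflexive iff it is complete. I expect the embedding step to be the main obstacle: the real content is pinning down the neighborhood base of $\tau^{**}$ and invoking the bipolar theorem so as to recover $\tau$ \emph{exactly}, rather than merely a comparable topology; the completeness and density steps are then essentially bookkeeping built on the earlier results.
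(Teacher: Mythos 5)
Your proof is correct, but the central step---showing $j$ is a topological embedding---takes a genuinely different route from the paper. The paper argues by contradiction in the net language: assuming $\hat x_\alpha\goesc 0$ in $\mathcal L_c\mathcal L_cX$ while $x_\alpha\not\to 0$ in~$X$, it passes to a quasi-subnet avoiding a convex circled zero neighborhood~$V$, uses Hahn--Banach separation to choose $f_\alpha\in V^\circ$ with $f_\alpha(x_\alpha)\ge 1$, extracts a continuously convergent quasi-subnet of $(f_\alpha)$ via the compactness of $V^\circ$ in $\mathcal L_cX$ (Proposition~\ref{TVS-loc-comp}), and then reads off the contradiction $f_\alpha(x_\alpha)=\hat x_\alpha(f_\alpha)\to 0$. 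You instead pin down a zero-neighborhood base for $\tau^{**}$---legitimate, since the paper itself establishes in the proof of Theorem~\ref{LcX-refl}, using Corollary~\ref{LcLc-lctvs} and Proposition~\ref{c-ucc-polar}, that $\tau^{**}$ is the topology of uniform convergence on the polars $V^\circ$---and compute the pullback exactly: $j^{-1}(W_{V,\varepsilon})=\varepsilon V^{\circ\circ}=\varepsilon V$ for closed absolutely convex~$V$, by the bipolar theorem (with the standard remark that $\tau$-closed convex sets are weakly closed). Your route buys the sharper, quantitative conclusion that $\tau^{**}$ restricts along $j$ to \emph{exactly}~$\tau$, with an explicit correspondence of bases, at the cost of importing the bipolar machinery from classical duality theory; the paper's route stays inside its net-convergence toolkit, needing only Hahn--Banach separation plus the already-proved compactness of polars, in keeping with its aim of showcasing net techniques. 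The density argument (annihilator of $j(X)$ lies in $\mathcal L_cX$ by Theorem~\ref{LcX-refl}, hence vanishes) and the derivation of ``reflexive iff complete'' coincide with the paper's. You also make explicit two points the paper leaves tacit: the Hausdorff assumption underlying injectivity of~$j$ and the notion of completion, and the completeness of $\mathcal L_c\mathcal L_cX$, which indeed follows from Proposition~\ref{Cc-complete} since $\mathcal L_c\mathcal L_cX$ is a closed subspace of $C_c(\mathcal L_cX)$ whose convergence is topological.
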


\begin{proof}
  Let $j\colon X\to\mathcal L_c\mathcal L_cX$ be as before. Recall
  that $\mathcal L_cX$ and $X^*$ agree as sets. By
  Corollary~\ref{LcLc-lctvs}, $\mathcal L_c\mathcal L_cX$ is a locally
  convex topological vector space whose topology will be denoted
  by~$\tau^{**}$. We already know that $j$ is continuous as a map from
  $(X,\tau)$ to $(\mathcal L_c\mathcal L_cX,\tau^{**})$. It is
  one-to-one: if $\hat x=0$ then $f(x)=\hat x(f)=0$ for all
  $f\in X^*$, so that $x=0$.

  Next, we claim that $j$ is a homeomorphism. Suppose
  $\hat x_\alpha\goesc 0$ in $\mathcal L_c\mathcal L_cX$, but
  $x_\alpha\not\to 0$ in~$X$. Passing to a quasi-subnet, we find a
  convex circled neighborhood $V$ of zero in $X$ such that
  $x_\alpha\notin V$ for all~$\alpha$. By Hahn-Banach Theorem, for
  every~$\alpha$, we can find $f_\alpha\in X^*$ such that
  $f_\alpha(x_\alpha)\ge 1$ but $\bigabs{f_\alpha(x)}\le 1$ for all
  $x\in V$, i.e., $f_\alpha\in V^\circ$. Since $V^\circ$ is compact in
  $\mathcal L_cX$,
  by passing to
  further quasi-subnets of $(f_\alpha)$ and $(x_\alpha)$ we may
  assume $f_\alpha\goesc f$ for some~$f$. As $\hat
  x_\alpha\goesc 0$ in $\mathcal L_c\mathcal L_cX$, we have
  \begin{math}
    f_\alpha(x_\alpha)=\hat x_\alpha(f_\alpha)\to 0
  \end{math}
  which contradicts $f_\alpha(x_\alpha)\ge 1$ for all~$\alpha$.

  The previous paragraph shows $j(X)$ is a (linear topological)
  subspace of $\mathcal L_c\mathcal L_cX$. We claim that it is
  dense. Assume towards a contradiction that $j(X)$ is not dense in
  $\mathcal L_c\mathcal L_cX$. An application of Hahn-Banach theorem
  gives a non-zero continuous linear functional on
  $\mathcal L_c\mathcal L_cX$ that vanishes on
  $j(X)$. Theorem~\ref{LcX-refl} yields that the dual of
  $\mathcal L_c\mathcal L_cX$ is $\mathcal L_cX$. Hence, this
  separating functional is an element of $\mathcal L_cX$; denote it
  by~$f$. Since $f$ vanishes on $j(X)$ implies
  $f(x)=\bigl(j(x)\bigr)(f)=0$ for every $x\in X$, we obtain the
  contradiction $f=0$.
\end{proof}

%

\section{Order convergence in vector lattices}
\label{sec:oconv}

There are several convergences that are critically important in the
theory of vector lattices, yet they are generally non-topological.
These include order convergence, relative uniform convergence, and
unbounded order convergence. While non-topological, these convergences
fit in the framework of the theory of convergence structures. In this
section, we present a survey of various properties of order
convergence from the point of view of convergence
structures. Throughout, $X$ will stand for a vector lattice and we
only consider real scalars; most of the results extend easily to
complex vector lattices. We refer the reader to~\cite{Aliprantis:06}
for background and terminology of the theory of vector lattices and
to~\cite{Abramovich:05} for a discussion on definitions of order
convergence.

\begin{definition}
  \label{def:oconv}
  A net $(x_\alpha)_{\alpha\in\Lambda}$ in a vector lattice $X$ is said to
  \term{converge in order} to some vector $x$ in $X$ if there exists a net
  $(u_\gamma)_{\gamma\in\Gamma}$ such that $u_\gamma\downarrow 0$ and
  for every $\gamma_0\in\Gamma$ there exists an $\alpha_0\in\Lambda$
  such that $\abs{x_\alpha-x}\le u_\gamma$ for all
  $\alpha\ge\alpha_0$. We write $x_\alpha\goeso x$.  Note that the net
  $(u_\gamma)_{\gamma\in\Gamma}$ may be indexed by a different index
  set than the original net. We say that
  $(u_\gamma)_{\gamma\in\Gamma}$ is a \term{dominating} net.
\end{definition}

It is easy to see that order convergence satisfies (N1)--(N3), hence
forms a convergence structure. We will denote this convergence space by $(X,\goeso)$.

Definition~\ref{def:oconv} may be restated as follows:
$x_\alpha\goeso x$ if there exists a net $(u_\gamma)$ such that
$u_\gamma\downarrow 0$ and for every~$\gamma$, the order interval
$[x-u_\gamma,x+u_\gamma]$ contains a tail of $(x_\alpha)$. It is easy
to see that a single net $(u_\gamma)$
may be replaced with two nets
which ``control'' the difference from above and below:

\begin{proposition}\label{oconv-two-nets}
  $x_\alpha\goeso x$ if there exist two nets $(a_\gamma)$ and
  $(b_\gamma)$ such that $a_\gamma\uparrow x$, $b_\gamma\downarrow x$,
  and for every~$\gamma$, the order interval $[a_\gamma,b_\gamma]$
  contains a tail of $(x_\alpha)$.
\end{proposition}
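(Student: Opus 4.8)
The plan is to read the statement as the asserted equivalence (the two-net form is meant to \emph{replace} the single-net form), and to prove both directions. The direction from order convergence to the two-net description is immediate. If $x_\alpha\goeso x$ is witnessed by a dominating net $(u_\gamma)_{\gamma\in\Gamma}$ with $u_\gamma\downarrow 0$, I would set $a_\gamma=x-u_\gamma$ and $b_\gamma=x+u_\gamma$. Since $u_\gamma\downarrow 0$ forces $-u_\gamma\uparrow 0$, these satisfy $a_\gamma\uparrow x$ and $b_\gamma\downarrow x$, and $[a_\gamma,b_\gamma]=[x-u_\gamma,x+u_\gamma]$ contains a tail of $(x_\alpha)$ by the restatement of Definition~\ref{def:oconv} given just above the proposition.

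For the substantive converse, I am given nets $(a_\gamma)$ and $(b_\gamma)$ over a common index set $\Gamma$ (as the shared subscript indicates; otherwise one first passes to the product index set via Remark~\ref{same-ind}, which preserves monotonicity and the relevant suprema and infima) with $a_\gamma\uparrow x$, $b_\gamma\downarrow x$, and each interval $[a_\gamma,b_\gamma]$ containing a tail of $(x_\alpha)$. The natural candidate for a single dominating net is $u_\gamma=b_\gamma-a_\gamma$. First I would note that $a_\gamma\le x\le b_\gamma$ for every $\gamma$ (because $a_\gamma\uparrow x$ and $b_\gamma\downarrow x$), so both $x$ and the terms $x_\alpha$ from the relevant tail lie in $[a_\gamma,b_\gamma]$; consequently $\abs{x_\alpha-x}\le b_\gamma-a_\gamma=u_\gamma$ on that tail. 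This is exactly the tail-domination demanded by Definition~\ref{def:oconv}, so once $u_\gamma\downarrow 0$ is established we conclude $x_\alpha\goeso x$.

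The one point needing real care — and the main obstacle — is verifying $u_\gamma\downarrow 0$. Writing $u_\gamma=(x-a_\gamma)+(b_\gamma-x)$ exhibits it as a sum of two positive nets, $p_\gamma:=x-a_\gamma$ and $q_\gamma:=b_\gamma-x$, each decreasing to $0$. That $u_\gamma$ is decreasing is clear; the content is that its infimum is $0$, i.e.\ the standard Riesz-space fact that the sum of two positive nets each decreasing to zero again decreases to zero. I would prove this directly: if $w$ is any lower bound of $\{p_\gamma+q_\gamma\}$, then for a fixed $\gamma_0$ and all $\gamma\ge\gamma_0$ one has $w\le p_{\gamma_0}+q_\gamma$, so $w-p_{\gamma_0}$ is a lower bound of the tail $\{q_\gamma\}_{\gamma\ge\gamma_0}$; since a decreasing net and each of its tails share the same infimum, $w-p_{\gamma_0}\le 0$, i.e.\ $w\le p_{\gamma_0}$. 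As $\gamma_0$ was arbitrary, $w$ is a lower bound of $(p_{\gamma_0})$, hence $w\le 0$; since $0$ is plainly a lower bound of $\{u_\gamma\}$, we get $\inf_\gamma u_\gamma=0$, completing the argument.
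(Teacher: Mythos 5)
Your proof is correct, and it is exactly the argument the paper has in mind: the authors state Proposition~\ref{oconv-two-nets} without proof (``It is easy to see that a single net $(u_\gamma)$ may be replaced with two nets\dots''), and the intended verification is precisely your translation $a_\gamma=x-u_\gamma$, $b_\gamma=x+u_\gamma$ in one direction and $u_\gamma=b_\gamma-a_\gamma$ in the other. Your careful check of the only nontrivial point --- that $(x-a_\gamma)+(b_\gamma-x)\downarrow 0$, via the standard fact that a lower bound of a tail of a decreasing net is a lower bound of the whole net --- is sound and appropriately fills in what the paper leaves implicit.
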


This yields the following useful reformulation of the definition of
order convergence:

\begin{proposition}\label{oconv-nested-int}
  $x_\alpha\goeso x$ iff there is a nested
decreasing net of order intervals such that the intersection of all
these intervals is $\{x\}$ and each of the intervals contains a tail
of $(x_\alpha)$.
\end{proposition}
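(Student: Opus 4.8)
The plan is to derive both implications from Proposition~\ref{oconv-two-nets}, which is the natural bridge between order convergence and pairs of monotone controlling nets. The only genuine content is translating between the phrase ``$a_\gamma\uparrow x$ and $b_\gamma\downarrow x$'' and the phrase ``nested decreasing intervals with intersection $\{x\}$''. Everything reduces to the observation that a nonempty order interval is recovered from its endpoints, so a net of intervals is the same data as a pair of endpoint nets.

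For the forward implication, suppose $x_\alpha\goeso x$ and apply Proposition~\ref{oconv-two-nets} to obtain nets $(a_\gamma)$ and $(b_\gamma)$ with $a_\gamma\uparrow x$, $b_\gamma\downarrow x$, and each $[a_\gamma,b_\gamma]$ containing a tail of $(x_\alpha)$. Since $(a_\gamma)$ increases and $(b_\gamma)$ decreases, the intervals $[a_\gamma,b_\gamma]$ are nested decreasing. As $a_\gamma\le x\le b_\gamma$ for all $\gamma$, the point $x$ lies in every interval; conversely any $y$ in the intersection is an upper bound of $(a_\gamma)$ and a lower bound of $(b_\gamma)$, so $x=\sup a_\gamma\le y\le\inf b_\gamma=x$ forces $y=x$. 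Hence the intersection is exactly $\{x\}$, and this net of intervals witnesses the right-hand side.

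For the converse, start from a nested decreasing net of order intervals whose intersection is $\{x\}$ and each of which contains a tail of $(x_\alpha)$. Each interval is nonempty (it contains a tail), so I would write it as $[a_\gamma,b_\gamma]$ with $a_\gamma,b_\gamma$ its endpoints, and the nesting $[a_{\gamma_2},b_{\gamma_2}]\subseteq[a_{\gamma_1},b_{\gamma_1}]$ for $\gamma_1\le\gamma_2$ gives $a_{\gamma_1}\le a_{\gamma_2}$ and $b_{\gamma_2}\le b_{\gamma_1}$; that is, $(a_\gamma)$ increases and $(b_\gamma)$ decreases. Since $x$ lies in the intersection, $a_\gamma\le x\le b_\gamma$ for every $\gamma$, so $x$ is an upper bound of $(a_\gamma)$ and a lower bound of $(b_\gamma)$. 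The crux is upgrading these bounds to $\sup a_\gamma=x$ and $\inf b_\gamma=x$, which the lattice structure handles: if $z$ is any upper bound of $(a_\gamma)$, then $a_\gamma\le x\wedge z\le x\le b_\gamma$ shows $x\wedge z$ lies in every interval, so $x\wedge z=x$ and therefore $x\le z$; dually, any lower bound $z$ of $(b_\gamma)$ satisfies $a_\gamma\le x\le x\vee z\le b_\gamma$, whence $x\vee z=x$ and $z\le x$. Thus $a_\gamma\uparrow x$ and $b_\gamma\downarrow x$, and since each $[a_\gamma,b_\gamma]$ contains a tail of $(x_\alpha)$, Proposition~\ref{oconv-two-nets} yields $x_\alpha\goeso x$.

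The substance of the argument is concentrated in this meet/join trick: the set-level condition that the intervals intersect in $\{x\}$ carries no order information on its own, and converting it into the suprema/infima statements required by Proposition~\ref{oconv-two-nets} is where the vector lattice identities do the work. I expect this to be the only step requiring care; the remainder is bookkeeping about nested intervals and their endpoints.
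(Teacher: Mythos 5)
Your proof is correct and follows exactly the route the paper intends: Proposition~\ref{oconv-nested-int} is stated there as an immediate consequence of Proposition~\ref{oconv-two-nets}, and your endpoint bookkeeping plus the meet/join trick (using $x\wedge z$ and $x\vee z$ lying in every interval to upgrade the intersection condition to $a_\gamma\uparrow x$ and $b_\gamma\downarrow x$) is precisely the verification the paper omits. One pedantic remark: your forward direction uses the converse of Proposition~\ref{oconv-two-nets}, which is stated only as an ``if'', but this is harmless since Definition~\ref{def:oconv} supplies it directly via $a_\gamma=x-u_\gamma$ and $b_\gamma=x+u_\gamma$.
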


Proposition~\ref{oconv-nested-int} allows one to extend the definition
of order convergence from vector lattices to partially ordered sets;
cf.\ \cite[p.~171]{Schechter:97}.
While some results of this section
remain valid for partially ordered sets and ordered vector spaces, we
will focus on vector lattices.

It is a standard easy fact that order limits are unique. In the
language of convergence structures this precisely means that order
convergence structure is Hausdorff. In vector lattice theory, a
function $f\colon X\to Y$ between two vector lattices is said to be
\term{order continuous} if $x_\alpha\goeso x$ implies
$f(x_\alpha)\goeso f(x)$; this means that $f$ is 
continuous with respect to order convergence structures.

We will use the fact that if $(x_\alpha)$ is an increasing net then
$x_\alpha\goeso x$ iff $x_\alpha\uparrow x$; the latter means
$x=\sup x_\alpha$. Similarly, for a decreasing net we have
$x_\alpha\goeso x$ iff $x_\alpha\downarrow x$ iff $x=\inf x_\alpha$.

Is order convergence linear? It is easy to see from the definition
that this convergence is translation invariant in the sense that
$x_\alpha\goeso x$ implies $x_\alpha+a\goeso x+a$. This allows one to
reduce many questions about order convergence to order convergence at
zero. Moreover, it is a standard fact that addition is jointly
continuous: if $(x_\alpha)_{\alpha\in A}$ converges in order to $x$
and $(y_\beta)_{\beta\in B}$ converges in order to~$y$, the net
$(x_\alpha+y_\beta)$ indexed by $A\times B$ converges in order to
$x+y$. It is also easy to see that $x_\alpha\goeso x$ implies
$\lambda x_\alpha\goeso \lambda x$ for every $\lambda\in\mathbb
R$. The only outstanding issue is the joint continuity of scalar
multiplication.

\begin{proposition}
  A vector lattice is Archimedean iff its order convergence structure
  is linear.
\end{proposition}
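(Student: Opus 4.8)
The statement asserts linearity of the order convergence structure, and since translation invariance, joint continuity of addition, and continuity in a fixed scalar are already recorded above, the only thing to settle is the joint continuity of scalar multiplication: whenever $r_\alpha\to r$ in $\mathbb R$ and $x_\beta\goeso x$ in $X$, the net $(r_\alpha x_\beta)$ over the product index set satisfies $r_\alpha x_\beta\goeso rx$. I would prove each implication by showing this is equivalent to the Archimedean property. The easier direction is the contrapositive of ``linear $\Rightarrow$ Archimedean''. If $X$ fails to be Archimedean, there are $x>0$ and $y\ge 0$ with $nx\le y$ for all $n$, so the decreasing net $\bigl(\tfrac1n y\bigr)$ has $\inf_n\tfrac1n y\ge x>0$. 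Since order convergence of a decreasing net to $0$ is equivalent to its infimum being $0$, we get $\tfrac1n y\not\goeso 0$. Taking $r_n=\tfrac1n\to 0$ in $\mathbb R$ and the constant net $x_\alpha\equiv y$ (which converges in order to $y$), joint continuity of scalar multiplication would force $\tfrac1n y=r_n x_\alpha\goeso 0\cdot y=0$, a contradiction. Hence a non-Archimedean lattice cannot have a linear order convergence.

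For the forward direction (Archimedean $\Rightarrow$ linear) the plan is to construct an explicit dominating net. Fix $r_\alpha\to r$ and $x_\beta\goeso x$, let $(u_\gamma)_{\gamma\in\Gamma}$ be a dominating net witnessing $x_\beta\goeso x$ (so $u_\gamma\downarrow 0$ and each $u_\gamma$ eventually dominates $\abs{x_\beta-x}$), and set $M=\abs r+1$, so that $\abs{r_\alpha}\le M$ for $\alpha$ large. Starting from the decomposition
\begin{displaymath}
  \abs{r_\alpha x_\beta-rx}\le\abs{r_\alpha}\,\abs{x_\beta-x}+\abs{r_\alpha-r}\,\abs{x},
\end{displaymath}
I would propose the candidate dominating net $w_{(\gamma,n)}=M u_\gamma+\tfrac1n\abs{x}$ indexed by $\Gamma\times\mathbb N$ with the product order. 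This net is clearly decreasing in both components. The domination is then routine: given $(\gamma,n)$, choose $\alpha_0$ so large that $\abs{r_\alpha}\le M$ and $\abs{r_\alpha-r}\le\tfrac1n$ for $\alpha\ge\alpha_0$, and choose $\beta_0$ so that $\abs{x_\beta-x}\le u_\gamma$ for $\beta\ge\beta_0$; for $(\alpha,\beta)\ge(\alpha_0,\beta_0)$ the displayed inequality gives $\abs{r_\alpha x_\beta-rx}\le M u_\gamma+\tfrac1n\abs{x}=w_{(\gamma,n)}$.

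The main obstacle, and the only place the Archimedean hypothesis is actually used, is verifying $w_{(\gamma,n)}\downarrow 0$, i.e.\ that its infimum over $\Gamma\times\mathbb N$ is $0$. Here I would argue as follows: if $v\le M u_\gamma+\tfrac1n\abs{x}$ for all $(\gamma,n)$, then fixing $n$ and using that translation by the constant $\tfrac1n\abs{x}$ is an order isomorphism together with $\inf_\gamma u_\gamma=0$ gives $v\le\inf_\gamma\bigl(M u_\gamma+\tfrac1n\abs{x}\bigr)=\tfrac1n\abs{x}$; letting $n$ vary and invoking the Archimedean property $\inf_n\tfrac1n\abs{x}=0$ forces $v\le 0$. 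As $0$ is evidently a lower bound, the infimum is $0$, so $w_{(\gamma,n)}$ is a valid dominating net and $r_\alpha x_\beta\goeso rx$. This establishes joint continuity of scalar multiplication and hence linearity, completing both directions.
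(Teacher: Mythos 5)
Your proof is correct and follows essentially the same route as the paper: the easy direction is the paper's observation that linearity forces $\frac1n u\goeso 0$, hence $\frac1n u\downarrow 0$, stated contrapositively, and the forward direction uses the identical decomposition $\abs{r_\alpha x_\beta-rx}\le\abs{r_\alpha}\abs{x_\beta-x}+\abs{r_\alpha-r}\abs{x}$ with the Archimedean property entering through $\frac1n\abs{x}\downarrow 0$. The only cosmetic difference is that the paper concludes by citing the already-established joint continuity of addition applied to the two order-null summands, whereas you inline that step by building the single explicit dominating net $Mu_\gamma+\frac1n\abs{x}$ and checking its infimum is zero directly.
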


\begin{proof}  
  By the preceding paragraph, we know that order convergence is linear
  iff scalar multiplication is jointly continuous. In the latter case
  $\frac1nu\goeso 0$ for every $u\in X_+$. This means
  $\frac1nu\downarrow 0$, so $X$ is Archimedean.

  Now suppose that $X$ is Archimedean. Let
  $\lambda_\alpha\to\lambda$ in $\mathbb R$ and $x_\beta\goeso x$
  in~$X$; we need to show
  $\lambda_\alpha x_\beta\goeso\lambda x$. By Remark~\ref{same-ind}
  we may assume the two nets have the same index
  set. Passing to a tail, we may also assume $(\lambda_\alpha)$ is
  bounded; i.e., there is some $M>0$ where
  $\abs{\lambda_\alpha}\le M$ for all~$\alpha$. Then
  \begin{displaymath}
    \abs{\lambda_\alpha x_\alpha-\lambda x}
    \le M\abs{x_\alpha-x}+\abs{\lambda_\alpha-\lambda}\abs{x}.
  \end{displaymath}
  Clearly, $M\abs{x_\alpha-x}\goeso 0$. Let
  $u_n=\frac1n\abs{x}$. Since $X$ is Archimedean, $u_n\downarrow 0$.
  For every~$n$, there exists $\alpha_0$ such that
  $\abs{\lambda_\alpha-\lambda}<\frac1n$ for all $\alpha\ge\alpha_0$;
  it follows that
  \begin{math}
    \abs{\lambda_\alpha-\lambda}\abs{x}\le u_n
  \end{math}
  and, therefore,
  \begin{math}
    \abs{\lambda_\alpha-\lambda}\abs{x}\goeso 0.
  \end{math}
  This yields $\lambda_\alpha x_\alpha\goeso\lambda x$.
\end{proof}

The preceding proposition somewhat justifies the importance of the
Archimedean Property and explains why most of the literature on vector
lattices focuses on Archimedean spaces.

\smallskip

It is a standard fact from the theory of vector lattices that lattice
operations $x\vee y$, $x\wedge y$, $\abs{x}$, $x^+$, and $x^-$ are 
order continuous. Indeed, the map $x\to\abs{x}$ is order continuous because
$\bigabs{\abs{x_\alpha}-\abs{x}}\le\abs{x_\alpha-x}$. Similar
arguments show that $x^+$ and $x^-$ are order continuous. It now
follows from the fact that $x\vee y=x+(y-x)^+$ that the map
$(x,y)\mapsto x\vee y$
is jointly continuous. Finally, $x\wedge y=-\bigl((-x)\vee(-y)\bigr)$
yields the joint continuity of the map $(x,y)\mapsto x\wedge y$.

Recall that a subset $A$ of a vector lattice is said to be \term{order
  bounded} when it is contained in an order interval. These sets
correspond to the bounded sets of $(X,\goeso)$ when order convergence
is linear.

\begin{proposition}\label{o-bdd}
  A subset $B$ in an Archimedean vector lattice is order bounded iff
  it is bounded in the order convergence structure.
\end{proposition}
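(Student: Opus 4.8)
The plan is to prove both implications directly from the definitions, unwinding Definition~\ref{def:bdd} and Definition~\ref{def:oconv}. Recall that $B$ being order bounded means $B\subseteq[-w,w]$ for some $w\in X_+$, while $B$ being bounded in $(X,\goeso)$ means $(r_\alpha x)_{(\alpha,x)\in\Lambda\times B}\goeso 0$ for every net $(r_\alpha)_{\alpha\in\Lambda}\to 0$ in $\mathbb R$, where $\Lambda\times B$ is directed by its first coordinate. The empty set is order bounded and bounded trivially, so I would assume $B\neq\varnothing$.

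For the forward implication, suppose $B\subseteq[-w,w]$ and fix an arbitrary $(r_\alpha)_{\alpha\in\Lambda}\to 0$. I would exhibit a dominating net for $(r_\alpha x)$ explicitly by setting $u_n=\tfrac1n w$ for $n\in\mathbb N$. Since $X$ is Archimedean, $u_n\downarrow 0$; this is the one and only place the Archimedean hypothesis is used. Given $n$, choose $\alpha_0$ with $\abs{r_\alpha}\le\tfrac1n$ for all $\alpha\ge\alpha_0$. Then for every $(\alpha,x)$ in the tail determined by $\alpha\ge\alpha_0$ we have $\abs{r_\alpha x}=\abs{r_\alpha}\,\abs{x}\le\tfrac1n w=u_n$, so $u_n$ dominates a tail of $(r_\alpha x)$ for each $n$. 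Hence $(r_\alpha x)\goeso 0$ and $B$ is bounded.

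For the converse, I would feed the single sequence $r_n=\tfrac1n\to 0$ into the boundedness hypothesis; this is legitimate because boundedness quantifies over all null nets of scalars, including this particular sequence, and it avoids any circledness assumption on~$B$. This yields $(\tfrac1n x)_{(n,x)\in\mathbb N\times B}\goeso 0$, so there is a dominating net $(u_\gamma)_{\gamma\in\Gamma}$ with $u_\gamma\downarrow 0$. Fixing any single $\gamma\in\Gamma$, the tail condition produces an index $(n_0,x_0)$ with $\tfrac1n\abs{x}\le u_\gamma$ for all $n\ge n_0$ and all $x\in B$; taking $n=n_0$ gives $\abs{x}\le n_0 u_\gamma$ for every $x\in B$, i.e.\ $B\subseteq[-n_0 u_\gamma,\,n_0 u_\gamma]$, so $B$ is order bounded. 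The only thing to get right is the bookkeeping of the product index set $\Lambda\times B$: its tails are governed by the first coordinate alone, which is precisely what lets one pass from a domination statement holding on a tail to a bound $\abs{x}\le n_0 u_\gamma$ that is uniform over all of~$B$. I do not expect a genuine obstacle beyond this bookkeeping and the correct placement of the Archimedean property in the forward direction.
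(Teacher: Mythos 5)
Your proof is correct and takes essentially the same route as the paper's: the forward direction dominates $(r_\alpha x)$ by the net $\bigl(\tfrac1n w\bigr)$, with the Archimedean property supplying $\tfrac1n w\downarrow 0$, and the converse tests boundedness against the single sequence $\bigl(\tfrac1n\bigr)$ and extracts an order bounded tail of $\bigl(\tfrac1n x\bigr)_{(n,x)\in\mathbb N\times B}$. Your fixing of a single $\gamma$ in the dominating net just makes explicit the paper's one-line remark that an order convergent net has an order bounded tail, and your bookkeeping of the first-coordinate direction on $\mathbb N\times B$ is exactly right.
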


\begin{proof}
  Assume $A$ is order bounded: $A\subseteq[-u,u]$ for some
  $u\in X_+$. Let $(r_\alpha)_{\alpha\in\Lambda}$ be a net in
  $\mathbb R$ with $r_\alpha\to 0$. We need to show that
  $(r_\alpha a)$ converges in order to zero when viewed as a net over
  $\Lambda\times A$ directed by the first component. This follows from
  the fact that $\abs{r_\alpha a}\le\abs{r_\alpha}u\goeso 0$.

  Now suppose $A$ is bounded in $(X,\goeso)$. Then 
  the net $(\frac1n a)$ indexed by $\mathbb N\times A$ converges in
  order to zero. In particular, this net has an order bounded tail. It
  follows that there exists $u\in X_+$ and $n_0\in\mathbb N$ such that
  $\frac{1}{n_0}a\in[-u,u]$ for all $a\in A$. Then
  $A\subseteq[-n_0u,n_0u]$. 
\end{proof}

It is easy to see that every order convergent net has an order bounded tail.
In the language of convergence structures this translates to the following.

\begin{corollary}\label{o-loc-bdd}
  If $X$ is an Archimedean vector lattice then the convergence space
  $(X,\goeso)$ is locally bounded.
\end{corollary}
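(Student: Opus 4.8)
The plan is to unwind the definition of local boundedness and reduce everything to the already-established identification of order bounded sets with bounded sets. Recall that a net convergence vector space is locally bounded precisely when every convergent net has a bounded tail, where ``bounded'' means bounded in the sense of Definition~\ref{def:bdd}. So I must start with an arbitrary net $(x_\alpha)_{\alpha\in\Lambda}$ with $x_\alpha\goeso x$ and produce a single tail set $\{x_\alpha\}_{\alpha\ge\alpha_0}$ that is bounded in $(X,\goeso)$.

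First I would extract a dominating net from the definition of order convergence. By Definition~\ref{def:oconv}, since $x_\alpha\goeso x$ there is a net $(u_\gamma)_{\gamma\in\Gamma}$ with $u_\gamma\downarrow 0$ such that for every $\gamma_0\in\Gamma$ there exists $\alpha_0\in\Lambda$ with $\abs{x_\alpha-x}\le u_{\gamma_0}$ for all $\alpha\ge\alpha_0$. Here I only need a single index: fix any $\gamma_0\in\Gamma$ and take the corresponding $\alpha_0$. Then $x_\alpha\in[x-u_{\gamma_0},\,x+u_{\gamma_0}]$ for every $\alpha\ge\alpha_0$, so the tail set $\{x_\alpha\}_{\alpha\ge\alpha_0}$ is contained in an order interval and is therefore order bounded.

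The final step invokes Proposition~\ref{o-bdd}. Since $X$ is Archimedean, that proposition tells us a subset of $X$ is order bounded if and only if it is bounded in the order convergence structure. Applying it to the order bounded tail $\{x_\alpha\}_{\alpha\ge\alpha_0}$ shows this tail is bounded in $(X,\goeso)$. As $(x_\alpha)$ was an arbitrary order convergent net, every convergent net has a bounded tail, which is exactly local boundedness.

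There is no real obstacle here; the argument is essentially the sentence preceding the statement (``every order convergent net has an order bounded tail'') combined with Proposition~\ref{o-bdd}. The only point deserving care is that the conclusion must be phrased in terms of boundedness \emph{in the convergence structure} rather than order boundedness, and it is precisely the Archimedean hypothesis feeding into Proposition~\ref{o-bdd} that licenses passing between the two notions; without Archimedeanness this bridge fails. Everything else is a one-line unwinding of definitions.
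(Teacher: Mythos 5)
Your argument is correct and is exactly the paper's intended proof: the paper states it in the sentence preceding the corollary (``every order convergent net has an order bounded tail'', obtained just as you do by fixing a single index $\gamma_0$ of the dominating net) and then translates via Proposition~\ref{o-bdd}, with the Archimedean hypothesis entering only through that proposition. You have merely written out the one-line unwinding the paper leaves implicit, including the correct observation that the Archimedean property is what bridges order boundedness and boundedness in the convergence structure.
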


\begin{example}
  Let $X=\mathbb R^2$ with the lexicographic order. Note that $X$ is
  non-Archimedean and its order convergence structure is not linear.
  Let $A$ be the $y$-axis. Clearly, $A$ is order bounded as
  $A\subseteq[-u,u]$ for $u=(1,0)$. However, $A$ is not bounded in the
  natural convergence structure of $\mathbb R^2$. This example
  illustrates that in the absence of linearity, the concept of
  order boundedness may be ``ill behaved''.
\end{example}

\subsection*{Sublattices.} Let $Y$ be a sublattice of an Archimedean
vector lattice~$X$. In particular, $Y$ is a vector lattice in its own
right. It is well known that order convergence in $Y$ need not
agree with the order convergence inherited from~$X$. In other words, order
convergence generally depends on the ambient space. For example, the
unit vector basis $(e_n)$ in $c_0$ fails to converge in order (it is
not even order bounded) but, viewed as a sequence in~$\ell_\infty$,
it converges in order to zero. It may also happen that
$x_\alpha\goeso 0$ in $Y$ but not in~$X$; see, e.g., Example~2.6
in~\cite{Gao:17}. In particular, a sublattice need not be a subspace
in the sense of convergence structures.

$Y$ is said to be a \term{regular sublattice} of $X$ if
$y_\alpha\goeso 0$ in $Y$ implies $y_\alpha\goeso 0$
in~$X$. Equivalently, the inclusion map is order continuous.

A sublattice $T$ is said to be \term{majorizing} in $X$ if for every
$x\in X_+$ there exists $y\in Y_+$ such that $x\le y$.  We say that
$Y$ is \term{order dense} in $X$ if for every non-zero $x\in X_+$
there exists $y\in Y_+$ such that $0<y\le x$. Theorem~1.23
in~\cite{Aliprantis:03} asserts that every order dense sublattice is
regular. In particular, every Archimedean vector lattice $X$ is order
dense and, therefore, regular in its order (or Dedekind) completion~$X^\delta$.

One may ask whether being order dense is the same as being dense with
respect to the order convergence structure; that is, whether order
dense is equivalent to every $x\in X$ is an order limit of some net
in~$Y$.  It is shown in Theorem~1.27 of~\cite{Aliprantis:03} that $Y$
is order dense iff $a=\sup[0,a]\cap Y$ for every $a\in X_+$ where the
supremum is evaluated in~$X$. It follows that if $Y$ is order dense
then it is dense with respect to the order convergence
structure. Example~2.6 in~\cite{Gao:17} shows that the converse is
generally false. Thus, there is an unfortunate clash of terminologies:
a sublattice which is dense in the sense of order convergence
structure need not be order dense. However, Corollary~2.13
of~\cite{Gao:17} shows that the two concepts of density agree for
regular sublattices. In particular, every Archimedean vector lattice
$X$ is dense in $X^\delta$ in the sense of the order convergence
structure.

Theorem~2.8 of~\cite{Gao:17} asserts that if $Y$ is order dense and
majorizing, and $(y_\alpha)$ is a net in $Y$ then $y_\alpha\goeso 0$
in $Y$ iff $y_\alpha\goeso 0$ in~$X$. That is, $Y$ is a convergence
subspace of~$X$. A special case of this is the following fact that was
established in~\cite{Abramovich:05}:

\begin{theorem}\label{AS}
  Every Archimedean vector lattice $X$ is a convergence subspace in
  its order completion~$X^\delta$; i.e., $x_\alpha\goeso 0$ in $X$ iff
  $x_\alpha\goeso 0$ in $X^\delta$ for every net $(x_\alpha)$ in~$X$.
\end{theorem}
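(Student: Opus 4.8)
The plan is to treat the two implications of the ``iff'' separately, and to note at the outset that only the case of limit~$0$ (as written) is needed, the general case following by translation invariance. The forward implication is immediate from material already in the excerpt: $X$ is order dense, hence regular, in $X^\delta$, and regularity of $X$ in $X^\delta$ says precisely that $x_\alpha\goeso 0$ in $X$ implies $x_\alpha\goeso 0$ in $X^\delta$. So all of the content sits in the reverse implication.

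For the reverse implication, the quickest route is to invoke Theorem~2.8 of~\cite{Gao:17}: it delivers the conclusion as soon as $X$ is both order dense and majorizing in $X^\delta$. Order density is already recorded, so it would remain only to recall the standard fact that $X$ is majorizing in its Dedekind completion, i.e.\ every element of $X^\delta$ lies below some element of~$X$. I would prefer, however, to give a direct construction, since it exposes the mechanism and keeps the section self-contained.

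Here is the plan for the direct argument. Suppose $x_\alpha\goeso 0$ in $X^\delta$, witnessed by a dominating net $(u_\gamma)_{\gamma\in\Gamma}$ with $u_\gamma\downarrow 0$ in $X^\delta$ and with each order interval $[-u_{\gamma_0},u_{\gamma_0}]$ eventually containing the terms of $(x_\alpha)$. Since the $u_\gamma$ live in $X^\delta$, I would replace them by majorants in~$X$: set $\Delta=\{(\gamma,v)\mid\gamma\in\Gamma,\ v\in X,\ v\ge u_\gamma\}$, nonempty in each fibre because $X$ is majorizing, and order it by $(\gamma_1,v_1)\le(\gamma_2,v_2)$ iff $\gamma_1\le\gamma_2$ and $v_2\le v_1$. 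Directedness of $\Delta$ uses that $X$ is a sublattice: given two indices, pick $\gamma_3\ge\gamma_1,\gamma_2$ and take $v_3=v_1\wedge v_2\in X$, which still dominates $u_{\gamma_3}$ since $u_{\gamma_3}\le u_{\gamma_1}\le v_1$ and $u_{\gamma_3}\le u_{\gamma_2}\le v_2$. Putting $w_{(\gamma,v)}=v$ then defines a decreasing net in $X$ that dominates the same tails of $(x_\alpha)$, because $w_{(\gamma_0,v_0)}=v_0\ge u_{\gamma_0}$.

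The final and decisive step is to show $w_{(\gamma,v)}\downarrow 0$ in $X$, not merely in $X^\delta$; this is where I expect the main obstacle to lie. The key lemma is the infimum identity $u_\gamma=\inf\{v\in X\mid v\ge u_\gamma\}$, computed in $X^\delta$, which is the dual of the order-density characterization $a=\sup([0,a]\cap X)$ from Theorem~1.27 of~\cite{Aliprantis:03}; I would prove it by fixing a majorant $v_0\ge u_\gamma$ and identifying the supremum of $\{v_0-v\mid v\in X,\ u_\gamma\le v\le v_0\}$ with $v_0-u_\gamma$ via order density. Granting this, any lower bound $z$ of the net satisfies $z\le u_\gamma$ for every $\gamma$, hence $z\le\inf_\gamma u_\gamma=0$; since $0\in X$ is itself a lower bound, the infimum of $(w_{(\gamma,v)})$ is $0$ in $X^\delta$ and therefore also in $X$. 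Thus $(w_{(\gamma,v)})$ is a dominating net in $X$ witnessing $x_\alpha\goeso 0$ in $X$, completing the reverse implication.
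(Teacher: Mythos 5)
Your proposal is correct, and it actually contains more than the paper does: in the paper, Theorem~\ref{AS} is not proved independently but obtained purely by citation, as the special case of Theorem~2.8 of \cite{Gao:17} (an order dense and majorizing sublattice is a convergence subspace) applied to $X$ inside $X^\delta$, with credit to \cite{Abramovich:05} for the original result --- that is, exactly your ``quickest route,'' while your forward direction via order density $\Rightarrow$ regularity (Theorem~1.23 of \cite{Aliprantis:03}) matches the paper's surrounding discussion. Your direct argument for the reverse implication is sound and self-contained. The index set $\Delta$ is directed precisely because $X$ is a sublattice ($v_1\wedge v_2\in X$ still majorizes $u_{\gamma_3}$); the net $w_{(\gamma,v)}=v$ is decreasing, positive (since $u_\gamma\downarrow 0$ forces $u_\gamma\ge 0$), and dominates the required tails of $(x_\alpha)$ because $v\ge u_\gamma$; and your key lemma $u_\gamma=\inf\{v\in X\mid v\ge u_\gamma\}$, computed in $X^\delta$, follows exactly as you indicate from the bijection $v\mapsto v_0-v$ between $\{v\in X\mid u_\gamma\le v\le v_0\}$ and $[0,v_0-u_\gamma]\cap X$ together with Theorem~1.27 of \cite{Aliprantis:03} (passing from the truncated family to all majorants is harmless, since a lower bound of the full family bounds the truncated one). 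The final passage from $\inf w_{(\gamma,v)}=0$ in $X^\delta$ to $\inf w_{(\gamma,v)}=0$ in $X$ is legitimate because the infimum $0$ lies in~$X$. As for what each route buys: the paper's citation is shorter and places the theorem inside the more general sublattice result of \cite{Gao:17}, whereas your construction keeps the section self-contained and isolates exactly where each hypothesis enters --- order density twice (regularity for the forward direction, the dual infimum identity for the reverse) and the standard majorizing property of $X$ in $X^\delta$ --- so that it in effect reproves the Abramovich--Sirotkin theorem from first principles.
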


It is also worth mentioning that Corollary~2.12 in~\cite{Gao:17} says
that if $Y$ is a regular sublattice of $X$ then the order convergences
of $X$ and of $Y$ agree for order bounded nets.

\subsection*{Order completeness}

Recall that a linear convergence structure is complete when every
Cauchy net is convergent. A vector lattice $X$ is said to be \term{order (or
Dedekind) complete} if every bounded above increasing net has a
supremum. We are going to show that an Archimedean vector lattice is
order complete iff its order convergence structure is complete; cf.\
Proposition~2.3 in~\cite{Gao:17}. 

Let $X$ be an order complete vector lattice and $(x_\alpha)$ an order
bounded net in~$X$. It is a standard exercise that $x_\alpha\goeso x$ iff 
\begin{math}
     x=\inf_{\alpha}\sup_{\beta\ge\alpha}x_\beta=      
       \sup_{\alpha}\inf_{\beta\ge\alpha}x_\beta.      
\end{math}
It follows that in Proposition~\ref{oconv-two-nets} we can take
$a_\alpha=\inf_{\beta\ge\alpha}x_\beta$ and
$b_\alpha=\sup_{\beta\ge\alpha}x_\beta$. In particular, this means
that the ``control'' nets may be chosen over the same index set as the
original net. Then the dominating net in Definition~\ref{def:oconv}
may also be chosen over the same index set as $(x_\alpha)$. Thus, in
this setting we obtain $x_\alpha\goeso x$ iff there exists a net
$(u_\alpha)$ over the same index where $u_\alpha\downarrow 0$ and
$\abs{x_\alpha-x}\le u_\alpha$ for every~$\alpha$.

A net $(x_\alpha)_{\alpha\in\Lambda}$ in a vector lattice $X$ is said
to be \term{order Cauchy} if it is Cauchy in the order convergence
structure; i.e., if the net $(x_\alpha-x_\beta)_{(\alpha,\beta)}$ is
order null. The latter net is indexed by $\Lambda\times\Lambda$ under the
component-wise order.

\begin{lemma}
  Every monotone order bounded net in an Archimedean vector lattice is
  order Cauchy.
\end{lemma}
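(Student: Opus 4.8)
The plan is to reduce the statement to the order complete setting by passing to the order completion $X^\delta$ and then transporting the conclusion back via Theorem~\ref{AS}. By replacing $(x_\alpha)_{\alpha\in\Lambda}$ with $(-x_\alpha)$ if necessary, I may assume throughout that the net is increasing; the decreasing case then follows immediately, since order convergence is translation and scalar invariant and since order Cauchyness of $(x_\alpha)$ is equivalent to that of $(-x_\alpha)$.

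First I would recall that $X$ embeds as an order dense, hence regular, sublattice of its order completion $X^\delta$, which is order complete and again Archimedean. Since $(x_\alpha)$ is order bounded in $X$, any order bound for it in $X$ is also an order bound in $X^\delta$; in particular the increasing net is bounded above in the order complete lattice $X^\delta$, so $x=\sup_\alpha x_\alpha$ exists there. Thus $x_\alpha\uparrow x$ in $X^\delta$, and by the standard fact (recorded earlier) that an increasing net order converges to its supremum, we get $x_\alpha\goeso x$ in $X^\delta$.

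Next, because $X^\delta$ is Archimedean, its order convergence structure is linear, so $X^\delta$ is a convergence vector space in which every convergent net is Cauchy. Applying this to $(x_\alpha)$ shows that it is order Cauchy in $X^\delta$; that is, the net $(x_\alpha-x_\beta)_{(\alpha,\beta)\in\Lambda\times\Lambda}$ (directed componentwise) is order null in $X^\delta$. The key observation is that each difference $x_\alpha-x_\beta$ lies in $X$, so $(x_\alpha-x_\beta)_{(\alpha,\beta)}$ is genuinely a net in $X$. Theorem~\ref{AS} then tells us that, for a net in $X$, order nullity in $X^\delta$ is equivalent to order nullity in $X$. Hence $(x_\alpha-x_\beta)\goeso 0$ in $X$, which is exactly the assertion that $(x_\alpha)$ is order Cauchy in $X$.

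I expect the only delicate point to be checking that the hypotheses of Theorem~\ref{AS} genuinely apply: one must verify that the difference net is a net in $X$ (immediate, as $X$ is a sublattice) and that Theorem~\ref{AS}, stated for convergence to~$0$, covers precisely the order Cauchy condition at hand. A direct construction of a dominating net inside $X$ would be the harder route, since the natural candidate—$u-x_\alpha$ for an upper bound $u$—is decreasing but need not decrease to $0$ in $X$ (its infimum is $u-\sup_\alpha x_\alpha$, which may fail to exist or to vanish in $X$). This is exactly the obstruction that the passage to $X^\delta$ circumvents.
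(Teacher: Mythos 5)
Your proposal is correct and follows essentially the same route as the paper: pass to the order completion $X^\delta$, where the supremum of the monotone bounded net exists, conclude the double net $(x_\alpha-x_\beta)$ is order null in $X^\delta$, and transfer order nullity back to $X$ via Theorem~\ref{AS}. You merely spell out the intermediate steps (convergent implies Cauchy in the linear structure on~$X^\delta$, the difference net living in~$X$) that the paper's terse proof leaves implicit.
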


\begin{proof}
  Suppose that $x_\alpha\uparrow\le u$ in~$X$. Then $z:=\sup x_\alpha$
  exists in~$X^\delta$. It follows that the double net
  $(x_\alpha-x_\beta)_{(\alpha,\beta)}$ is order null 
  in $X^\delta$ and, therefore, in~$X$. The case of a decreasing net
  may be handled in a similar way.
\end{proof}

\begin{proposition}
  An Archimedean vector lattice is order complete iff every order
  Cauchy net is order convergent.
\end{proposition}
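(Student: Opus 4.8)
The plan is to prove the two implications separately, disposing of the easy direction first. Suppose every order Cauchy net in $X$ is order convergent, and let $(x_\alpha)$ be an increasing net that is bounded above. Passing to a tail we may assume it is also order bounded, so the Lemma above applies and shows that $(x_\alpha)$ is order Cauchy; by hypothesis it is therefore order convergent, say $x_\alpha\goeso x$. Since $(x_\alpha)$ is increasing, this forces $x_\alpha\uparrow x$, i.e.\ $x=\sup x_\alpha$ (using the fact, recalled earlier, that for an increasing net $x_\alpha\goeso x$ is equivalent to $x_\alpha\uparrow x$). Hence every bounded above increasing net has a supremum, so $X$ is order complete.

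For the converse, assume $X$ is order complete and let $(x_\alpha)_{\alpha\in\Lambda}$ be order Cauchy, so the double net $(x_\alpha-x_\beta)_{(\alpha,\beta)}$ is order null with a dominating net $(u_\gamma)\downarrow 0$. The first step is to observe that $(x_\alpha)$ has an order bounded tail: fixing a single $\gamma$ produces an index $\delta_0$ with $\abs{x_\alpha-x_{\delta_0}}\le u_\gamma$ for all $\alpha\ge\delta_0$, so $\{x_\alpha\}_{\alpha\ge\delta_0}\subseteq[x_{\delta_0}-u_\gamma,x_{\delta_0}+u_\gamma]$. As order convergence is a tail property, I would pass to this tail and assume the whole net is order bounded.

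Using order completeness, I would then form the two monotone control nets $a_\alpha=\inf_{\beta\ge\alpha}x_\beta$ and $b_\alpha=\sup_{\beta\ge\alpha}x_\beta$, which exist because the relevant tails are order bounded; here $a_\alpha$ increases, $b_\alpha$ decreases, and $\{x_\beta\}_{\beta\ge\alpha}\subseteq[a_\alpha,b_\alpha]$ for every $\alpha$. Set $a=\sup_\alpha a_\alpha$ and $b=\inf_\alpha b_\alpha$, so $a\le b$. The \emph{crux} of the argument, and the step I expect to require the most care, is the squeeze showing $a=b$: given $\gamma$, the Cauchy condition yields $\delta_0$ with $x_\beta-x_{\beta'}\le u_\gamma$ for all $\beta,\beta'\ge\delta_0$; taking the supremum over $\beta$ and the infimum over $\beta'$ (all $\ge\alpha$) gives $b_\alpha-a_\alpha\le u_\gamma$ whenever $\alpha\ge\delta_0$, and since $a\ge a_\alpha$ and $b\le b_\alpha$ we obtain $0\le b-a\le u_\gamma$. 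As $u_\gamma\downarrow 0$ this forces $b=a$; denote the common value by $x$.

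Finally, with $a_\alpha\uparrow x$, $b_\alpha\downarrow x$, and $\{x_\beta\}_{\beta\ge\alpha}\subseteq[a_\alpha,b_\alpha]$ for each $\alpha$, Proposition~\ref{oconv-two-nets} yields $x_\alpha\goeso x$, so $(x_\alpha)$ is order convergent and the proof is complete. (Equivalently, one may simply invoke the standard description of order limits of order bounded nets in an order complete lattice recalled above, namely $x_\alpha\goeso x$ iff $x=\sup_\alpha\inf_{\beta\ge\alpha}x_\beta=\inf_\alpha\sup_{\beta\ge\alpha}x_\beta$, which is exactly what the squeeze establishes.)
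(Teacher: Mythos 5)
Your proof is correct and takes essentially the same route as the paper: the easy direction via the lemma that monotone order bounded nets are order Cauchy, and the hard direction by passing to an order bounded tail, forming $a_\alpha=\inf_{\beta\ge\alpha}x_\beta$ and $b_\alpha=\sup_{\beta\ge\alpha}x_\beta$, and squeezing $0\le\inf_\alpha b_\alpha-\sup_\alpha a_\alpha\le u_\gamma$ to identify the limit. The only (harmless) difference is bookkeeping: you work directly with a general dominating net $(u_\gamma)$ and conclude via Proposition~\ref{oconv-two-nets}, whereas the paper uses a dominating net indexed by $\Lambda\times\Lambda$ and the $\sup_\alpha\inf_{\beta\ge\alpha}x_\beta=\inf_\alpha\sup_{\beta\ge\alpha}x_\beta$ criterion.
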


\begin{proof}
  Let $(x_\alpha)$ be an order Cauchy net in an order complete vector
  lattice; we will show that it is order convergent. It is easy to
  see that $(x_\alpha)$ has an order bounded tail, so 
  we may assume that $(x_\alpha)$ is order bounded. Put
  \begin{displaymath}
    x=\sup a_\alpha\mbox{ where }
    a_\alpha=\inf_{\beta\ge\alpha}x_\beta\mbox{, and let }
    y=\inf b_\alpha\mbox{ where }
    b_\alpha=\sup_{\beta\ge\alpha}x_\beta.
  \end{displaymath}
  Then $a_\alpha\le x\le y\le b_\alpha$ for every~$\alpha$. By
  the preceding argument, it suffices to show that $x=y$. Let
  $(v_{\alpha,\beta})$ be a net such that
  $v_{\alpha,\beta}\downarrow 0$ and
  $\abs{x_\alpha-x_\beta}\le v_{\alpha,\beta}$. Fix a pair
  $(\alpha_0,\beta_0)$. Let $\alpha$ be such that $\alpha\ge\alpha_0$
  and $\alpha\ge\beta_0$. For every $\beta$ with $\beta\ge\alpha$, we
  have $(\alpha,\beta)\ge(\alpha_0,\beta_0)$, so that
  $\abs{x_\alpha-x_\beta}\le v_{\alpha_0,\beta_0}$. It follows that
  \begin{math}
    x_\beta\in[x_\alpha-v_{\alpha_0,\beta_0},x_\alpha+v_{\alpha_0,\beta_0}],
  \end{math}
  which yields
  \begin{math}
    a_\alpha,b_\alpha\in[x_\alpha-v_{\alpha_0,\beta_0},x_\alpha+v_{\alpha_0,\beta_0}].
  \end{math}
  We thus obtain $0\le b_\alpha-a_\alpha\le 2 v_{\alpha_0,\beta_0}$ and
  so $0\le y-x\le 2v_{\alpha_0,\beta_0}$. If follows that $x-y=0$.

  Conversely, suppose that every order Cauchy net in $X$ is order
  convergent. Let $0\le x_\alpha\!\!\uparrow\le u$. By the lemma,
  $(x_\alpha)$ is order Cauchy, hence order convergent; it follows
  that $\sup x_\alpha$ exists.
\end{proof}

Recall that the most common construction of the order completion of an
Archimedean vector lattice is analogous to the construction of
$\mathbb R$ using Dedekind cuts; see, for example,
\cite{Vulikh:67}. Also recall the alternate construction of
$\mathbb R$ using equivalence classes of Cauchy sequences of
rationals.

The latter construction has an analogue for Archimedean vector
lattices. Given an Archimedean vector lattice~$X$, we may view the
order convergence structure of its order completion $X^\delta$ as a
\term{Cauchy completion} of the order convergence structure on $X$ in
the following way: $X$ is a dense subspace of $X^\delta$ in the sense
of convergence structures, and the elements of $X^\delta$ are limits
of order Cauchy nets in~$X$. To see this, note that for every
$x\in X^\delta$ there exists a net $(x_\alpha)$ in $X$ with
$x_\alpha\goeso x$ in~$X^\delta$. It follows that $(x_\alpha)$ is
order Cauchy in $X^\delta$ and, therefore, in $X$ by
Proposition~\ref{AS}. Conversely, every order Cauchy net in $X$
remains order Cauchy in $X^\delta$ by Proposition~\ref{AS}; the
previous proposition now yields that it converges in order in~$X^\delta$.



\subsection*{Filter considerations.}

Proposition~\ref{oconv-nested-int} yields a filter version of order
convergence: a filter $\mathcal F$ on a vector lattice $X$ converges
in order to $x$ (written $\mathcal F\goeso x$) if $\mathcal F$
contains a nested decreasing family of order intervals whose
intersection is~$\{x\}$; see~\cite[p.~54]{Schaefer:74}
or~\cite{Anguelov:05}.  It can now be easily verified that this filter
convergence structure corresponds exactly to the net convergence
structure in Definition~\ref{def:oconv}. This allows one to use the
full power of the theory of filter convergence structures when dealing
with order convergence.

In particular, we can discuss the concepts of local convexity and
regularity from the theory of filter convergence structures. A
convergence space is \term{regular} when $\mathcal F\to x$ implies
that the filter generated by the closures of the sets in $\mathcal F$
still converges to~$x$; this is not the same as a regular sublattice.
A convergence vector space is
\term{locally convex} if $\mathcal F\to x$ implies the filter
generated by the convex hulls of the sets in $\mathcal F$ still
converges to~$x$.
Now let $X$ be a vector lattice equipped with its (filter) order
convergence structure. Since order intervals are order closed and
convex, we immediately obtain the following results. 

\begin{proposition}
	The order convergence structure is regular and locally convex.
\end{proposition}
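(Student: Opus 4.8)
The plan is to lean entirely on the filter reformulation of order convergence recorded just before the statement: by the filter version of Proposition~\ref{oconv-nested-int}, $\mathcal F\goeso x$ precisely when $\mathcal F$ contains a nested decreasing family $\{[a_\gamma,b_\gamma]\}_\gamma$ of order intervals with $\bigcap_\gamma[a_\gamma,b_\gamma]=\{x\}$. Both assertions will follow from the single observation that such order intervals are unchanged by the two operations in question — taking closures and taking convex hulls — so the witnessing family survives into the coarser filter and continues to witness convergence to~$x$.

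First I would treat regularity. Suppose $\mathcal F\goeso x$ and fix a nested decreasing family of order intervals $I_\gamma=[a_\gamma,b_\gamma]$ in $\mathcal F$ with $\bigcap_\gamma I_\gamma=\{x\}$. The collection $\{\overline A\mid A\in\mathcal F\}$ is a filter base, since $\overline{A\cap B}\subseteq\overline A\cap\overline B$ for $A,B\in\mathcal F$; let $\mathcal G$ be the filter it generates. Because order intervals are order closed, $\overline{I_\gamma}=I_\gamma$ for every~$\gamma$, so each $I_\gamma$ lies in~$\mathcal G$. Thus $\mathcal G$ contains the same nested decreasing family of order intervals with intersection $\{x\}$, and the filter characterization of order convergence gives $\mathcal G\goeso x$. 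This is exactly the condition for the order convergence structure to be regular.

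Local convexity is proved by the identical scheme, replacing closures with convex hulls. Given $\mathcal F\goeso x$ and a witnessing family $I_\gamma$ as above, I would form the filter generated by the convex hulls of the sets in~$\mathcal F$; this is again a filter base because $\mathrm{conv}(A\cap B)\subseteq\mathrm{conv}(A)\cap\mathrm{conv}(B)$. Since order intervals are convex, $\mathrm{conv}(I_\gamma)=I_\gamma$, so each $I_\gamma$ lies in this new filter, which therefore converges in order to~$x$ by the same appeal to Proposition~\ref{oconv-nested-int}.

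There is no real obstacle here: the entire content is the remark, made just before the statement, that order intervals are both order closed and convex, so they are fixed points of both operations. The only points requiring a line of verification are that the relevant collections of closures and of convex hulls are filter bases, and that any filter containing the nested family $\{I_\gamma\}$ with $\bigcap_\gamma I_\gamma=\{x\}$ converges in order to~$x$ — both of which are immediate from the filter form of Proposition~\ref{oconv-nested-int}.
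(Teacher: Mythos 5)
Your proof is correct and follows exactly the paper's route: the paper deduces the proposition immediately from the observation that order intervals are order closed and convex, combined with the filter characterization of order convergence, and you simply spell out the (routine) details that the witnessing nested family of intervals survives into the closure filter and the convex-hull filter. Nothing is missing; your verifications that these collections are filter bases and that order intervals are fixed by both operations are exactly the implicit steps behind the paper's ``we immediately obtain.''
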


\section{Relative uniform convergence and order topology}
\label{sec:uconv-otop}

We will show in this section that the Mackey modification of order
convergence is relative uniform convergence. We then discuss the topological
modification of order convergence and characterize the spaces where
order convergence is topological.

\smallskip

\subsection*{Relative uniform convergence.}
Let $X$ be a vector lattice. We say that a net $(x_\alpha)$ converges
to some $x\in X$ \term{(relatively) uniformly} and write
$x_\alpha\goesu x$ if there exists $e\in X_+$ such that for every
$\varepsilon>0$ there exists $\alpha_0$ such that
$\abs{x_\alpha-x}\le\varepsilon e$ for all $\alpha\ge\alpha_0$.

Let $e\in X_+$. For $x\in X$, put
\begin{displaymath}
    \norm{x}_e=\inf\bigl\{\lambda\ge 0\mid\abs{x}\le\lambda e\bigr\}.
\end{displaymath}
It is clear that $\norm{x}_e<\infty$ iff $x$ is in
the principal ideal $I_e$ of~$e$. It is also easy to see that
$\norm{\cdot}_e$ is a lattice seminorm on~$I_e$; it is a norm when $X$
is Archimedean. Then $x_\alpha\goesu x$ iff
$\norm{x_\alpha-x}_e\to 0$ for some $e\in X_+$.

If $X$ is an Archimedean vector lattice then relative uniform convergence
is a linear Hausdorff convergence structure on~$X$.  This may be
verified directly. It also follows immediately from the next result:

\begin{theorem}\label{u-Mack-o}
  Let $X$ be an Archimedean vector lattice. Then its relative uniform
  convergence structure is the Mackey modification of its order
  convergence structure.
\end{theorem}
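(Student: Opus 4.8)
The plan is to prove the two convergence structures agree by showing each implies the other, working at $x=0$ since both structures are linear and translation invariant. Recall the Mackey modification $\goesm$ is defined by $x_\alpha\goesm 0$ iff there is a bounded set $B$ such that every $\varepsilon B$ contains a tail of $(x_\alpha)$. By Proposition~\ref{o-bdd}, in the Archimedean setting the bounded sets of $(X,\goeso)$ are exactly the order bounded sets, so $B$ bounded means $B\subseteq[-e,e]$ for some $e\in X_+$. The strategy is to identify these order intervals as the witnesses for relative uniform convergence.

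First I would prove $\goesu\ \Rightarrow\ \goesm$. Suppose $x_\alpha\goesu 0$, so there is $e\in X_+$ with the property that for every $\varepsilon>0$ some tail satisfies $\abs{x_\alpha}\le\varepsilon e$. Take $B=[-e,e]$, which is order bounded, hence bounded in $(X,\goeso)$ by Proposition~\ref{o-bdd}. The condition $\abs{x_\alpha}\le\varepsilon e$ on a tail says exactly that $x_\alpha\in\varepsilon B$ on that tail, which is precisely the defining condition for $x_\alpha\goesm 0$. This direction is essentially a matter of matching definitions.

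The converse $\goesm\ \Rightarrow\ \goesu$ is where the real content lies. Assume $x_\alpha\goesm 0$ witnessed by a bounded set $B$. By Proposition~\ref{o-bdd}, $B\subseteq[-e,e]$ for some $e\in X_+$, so $\varepsilon B\subseteq[-\varepsilon e,\varepsilon e]$ for each $\varepsilon>0$. The Mackey condition gives, for each $\varepsilon>0$, a tail of $(x_\alpha)$ inside $\varepsilon B\subseteq[-\varepsilon e,\varepsilon e]$, i.e.\ $\abs{x_\alpha}\le\varepsilon e$ eventually. Since the \emph{same} $e$ works for all $\varepsilon$, this is exactly $x_\alpha\goesu 0$ with regulator $e$. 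The main obstacle to watch is that the Mackey definition only guarantees $\varepsilon B$ contains a tail, so one must be careful that passing to the order interval $[-\varepsilon e,\varepsilon e]\supseteq\varepsilon B$ preserves the ``tail'' property (it does, since a larger set still contains that tail) and that the regulator $e$ is independent of $\varepsilon$ (it is, since $B$ and hence $e$ are fixed before $\varepsilon$ is chosen).

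I would close by noting that both structures are linear and Hausdorff, so the agreement at $0$ propagates to agreement everywhere via $x_\alpha\goesu x\iff x_\alpha-x\goesu 0$ and the analogous equivalence for $\goesm$; this simultaneously confirms the claim asserted just before the theorem that relative uniform convergence is a linear Hausdorff convergence structure, since it coincides with the Mackey modification, which is automatically a linear convergence structure.
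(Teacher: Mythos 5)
Your proposal is correct and takes essentially the same approach as the paper: the paper's own proof is a two-sentence appeal to Proposition~\ref{o-bdd} plus the fact that an order bounded set is contained in $[-e,e]$ for some $e\in X_+$, which is exactly the identification of Mackey witnesses $B$ with regulators $e$ that you carry out explicitly in both directions. Your unwound version adds nothing beyond detail (and your closing remark that the translation to $x=0$ is definitional rather than dependent on linearity is a harmless refinement).
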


\begin{proof}
  Recall that by Proposition~\ref{o-bdd}, order bounded sets in $X$ are
  the sets that are bounded in the order convergence structure. The
  result now follows easily from the fact that a set is order bounded
  iff it is contained in $[-e,e]$ for some $e\in X_+$.
\end{proof}

The sequential variant of this theorem was proved in~\cite{VanderWalt:16}.

\begin{corollary}
  Let $\mathcal F$ be a filter in an  Archimedean vector lattice. Then
  $\mathcal F\goesu 0$ iff there exists $e\in X_+$ such that
  $[-\frac1ne,\frac1ne]\in\mathcal F$ for every $n\in\mathbb N$.
\end{corollary}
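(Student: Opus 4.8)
The plan is to deduce the corollary from Theorem~\ref{u-Mack-o} together with the filter description of the Mackey modification, rather than arguing directly from the definition of relative uniform convergence. By Theorem~\ref{u-Mack-o} the relative uniform convergence structure coincides with the Mackey modification $\goesm$ of order convergence, and since the net and filter Mackey modifications agree (Section~\ref{sec:terms}), the filter statement $\mathcal F\goesu 0$ is exactly $\mathcal F\goesm 0$. Unwinding the filter definition of the Mackey modification from Section~\ref{sec:fcs}, this says $\mathcal F\supseteq[\mathcal N_0 B]$ for some bounded set $B$, where $\mathcal N_0$ is the neighborhood filter of $0$ in $\mathbb R$ and $\mathcal N_0 B=\{UB\mid U\in\mathcal N_0\}$. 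Finally, by Proposition~\ref{o-bdd} the bounded sets of $(X,\goeso)$ are precisely the order bounded sets, i.e.\ those contained in some $[-e,e]$ with $e\in X_+$. So the task reduces to the purely lattice-theoretic equivalence: $\mathcal F\supseteq[\mathcal N_0 B]$ for some order bounded $B$ if and only if there is $e\in X_+$ with $[-\tfrac1n e,\tfrac1n e]\in\mathcal F$ for all $n\in\mathbb N$.

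For the forward direction I would take $e\in X_+$ with $B\subseteq[-e,e]$ and test the base $\mathcal N_0 B$ against the neighborhoods $U_n=(-\tfrac1n,\tfrac1n)\in\mathcal N_0$. For $u\in U_n$ and $b\in B$ one has $\abs{ub}=\abs{u}\,\abs{b}\le\abs{u}\,e\le\tfrac1n e$, so $U_nB\subseteq[-\tfrac1n e,\tfrac1n e]$. Since $U_nB\in\mathcal N_0 B\subseteq\mathcal F$ and $\mathcal F$ is closed under supersets, $[-\tfrac1n e,\tfrac1n e]\in\mathcal F$ for every $n$.

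For the converse I would simply set $B=[-e,e]$, which is order bounded, and show $\mathcal N_0 B\subseteq\mathcal F$. Given $U\in\mathcal N_0$, choose $\delta>0$ with $(-\delta,\delta)\subseteq U$ and $m\in\mathbb N$ with $\tfrac1m<\delta$; then any $y$ with $\abs{y}\le\tfrac1m e$ factors as $y=\tfrac1m(my)$ with $my\in[-e,e]=B$ and $\tfrac1m\in U$, so $[-\tfrac1m e,\tfrac1m e]\subseteq UB$. As $[-\tfrac1m e,\tfrac1m e]\in\mathcal F$, the superset $UB$ lies in $\mathcal F$; since $U$ was arbitrary, $\mathcal N_0B\subseteq\mathcal F$ and hence $[\mathcal N_0B]\subseteq\mathcal F$, giving $\mathcal F\goesm 0$. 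The only real content is the translation in the first paragraph --- identifying the filter form of $\goesu$ with the condition $\mathcal F\supseteq[\mathcal N_0 B]$ for an order bounded $B$; once that is in place, the two inclusions $U_nB\subseteq[-\tfrac1n e,\tfrac1n e]$ and $[-\tfrac1m e,\tfrac1m e]\subseteq UB$ are routine scalar--lattice estimates and constitute the main (but minor) computational step.
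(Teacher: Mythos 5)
Your proposal is correct and follows exactly the route the paper intends: the corollary is stated without proof as an immediate consequence of Theorem~\ref{u-Mack-o}, the filter definition of the Mackey modification ($\mathcal F\supseteq\mathcal N_0B$ for some bounded~$B$), the net/filter Mackey equivalence from Section~\ref{sec:terms}, and Proposition~\ref{o-bdd}, and indeed the paper later records the same identification in the paragraph noting that $\mathcal F\goesu 0$ iff $\mathcal N_0[-u,u]\subseteq\mathcal F$ with the intervals $\bigl[-\frac1n u,\frac1n u\bigr]$ forming a base. Your two scalar--lattice inclusions $U_nB\subseteq\bigl[-\frac1n e,\frac1n e\bigr]$ and $\bigl[-\frac1m e,\frac1m e\bigr]\subseteq UB$ correctly supply the routine verification the paper leaves implicit.
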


The following result characterizes order boundedness of an operator as
a form of continuity. This was proved in Theorem~5.1 of
\cite{Taylor:20}, but now it follows immediately from
Theorem~\ref{u-Mack-o} and Proposition~\ref{Mack-bdd}.

\begin{theorem}
  Let $T\colon X\to Y$ be a linear operator between Archimedean vector
  lattices. The following are equivalent:
  \begin{enumerate}
  \item $T$ is order bounded; that is, $T$ maps order bounded sets to order
    bounded sets.
  \item $T$ is relative uniformly continuous: $x_\alpha\goesu x$ implies
    $Tx_\alpha\goesu Tx$.
  \item $T$ is relative uniformly-to-order continuous; that is,
    $x_\alpha\goesu x$ implies $Tx_\alpha\goeso Tx$.
  \end{enumerate}
\end{theorem}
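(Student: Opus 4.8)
The plan is to recognize that, once $X$ and $Y$ are each endowed with their order convergence structures, the three conditions of this theorem are \emph{verbatim} the three equivalent conditions of Proposition~\ref{Mack-bdd}. So the entire proof reduces to a translation of terminology, with no new analysis required. First I would observe that, because $X$ and $Y$ are Archimedean, their order convergence structures are linear; hence both are convergence vector spaces and Proposition~\ref{Mack-bdd} is applicable to the linear map~$T$.

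Next I would invoke Proposition~\ref{o-bdd}, which identifies, in any Archimedean vector lattice, the order bounded sets with the sets that are bounded in the order convergence structure. Applying this in both the domain and the codomain, condition~(i)—that $T$ maps order bounded sets to order bounded sets—is precisely the assertion that $T$ is bounded in the sense of Proposition~\ref{Mack-bdd}\eqref{Mack-bdd-bdd}. The crucial identification then comes from Theorem~\ref{u-Mack-o}: relative uniform convergence \emph{is} the Mackey modification of order convergence, so $\goesu$ and $\goesm$ denote the same convergence on each space. Under this identification, condition~(ii) reads ``$x_\alpha\goesm x$ implies $Tx_\alpha\goesm Tx$'', which is Mackey-continuity, i.e.\ Proposition~\ref{Mack-bdd}\eqref{Mack-bdd-mm}; and condition~(iii) reads ``$x_\alpha\goesm x$ implies $Tx_\alpha\goeso Tx$'', which is exactly Proposition~\ref{Mack-bdd}\eqref{Mack-bdd-m}, the arrow there being the original (order) convergence on~$Y$. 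With all three conditions matched, the desired equivalences follow immediately from Proposition~\ref{Mack-bdd}.

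There is essentially no genuine obstacle here beyond careful bookkeeping. The only points that warrant explicit mention are that the Archimedean hypothesis is what supplies linearity of order convergence—so that the Mackey machinery of Proposition~\ref{Mack-bdd} is even available—and that Proposition~\ref{o-bdd} must be applied in both $X$ and $Y$. Once these are noted, the statement is a direct specialization of Proposition~\ref{Mack-bdd}, which is why it can be asserted to follow immediately from Theorem~\ref{u-Mack-o} and Proposition~\ref{Mack-bdd}.
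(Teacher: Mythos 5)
Your proposal is correct and is precisely the paper's argument: the authors prove this theorem by noting it ``follows immediately from Theorem~\ref{u-Mack-o} and Proposition~\ref{Mack-bdd},'' and you have simply spelled out the bookkeeping (linearity of order convergence via the Archimedean hypothesis, Proposition~\ref{o-bdd} identifying order bounded with convergence-bounded sets, and Theorem~\ref{u-Mack-o} identifying $\goesu$ with the Mackey modification) that the paper leaves implicit. The only detail you might add is that order convergence is Hausdorff, since the paper defines the Mackey modification for Hausdorff convergence vector spaces, but this is noted earlier in the paper and is immediate.
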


It is also observed in \cite{Taylor:20} that a Banach lattice is order
continuous iff order convergence and relative uniform convergence
structures agree iff order convergence and relative uniform
convergence agree on sequences. Thus, the order convergence structure
on Banach lattice $X$ is Mackey iff $X$ is order continuous.
Combining this with the preceding theorem, we get the following.

\begin{corollary}
  A linear operator from an order continuous Banach lattice to an
  Archimedean vector lattice is order bounded iff it is order
  continuous.
\end{corollary}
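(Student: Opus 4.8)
The plan is to combine the preceding theorem with the observation, recorded just above from~\cite{Taylor:20}, that the order convergence structure of an order continuous Banach lattice agrees with its relative uniform convergence structure. Concretely, if $X$ is an order continuous Banach lattice, then for every net $(x_\alpha)$ in $X$ and every $x\in X$ we have $x_\alpha\goeso x$ if and only if $x_\alpha\goesu x$.

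First I would note that, since $X$ is an order continuous Banach lattice, it is in particular Archimedean, so the preceding theorem applies to the operator $T\colon X\to Y$. The equivalence of conditions (i) and (iii) in that theorem states that $T$ is order bounded if and only if $T$ is relative uniformly-to-order continuous; that is, $x_\alpha\goesu x$ in $X$ implies $Tx_\alpha\goeso Tx$ in $Y$.

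Next I would rewrite the hypothesis of this continuity condition using the coincidence of convergences on $X$. Because $x_\alpha\goesu x$ is equivalent to $x_\alpha\goeso x$ on the order continuous Banach lattice $X$, the condition ``$x_\alpha\goesu x$ implies $Tx_\alpha\goeso Tx$'' is literally the same statement as ``$x_\alpha\goeso x$ implies $Tx_\alpha\goeso Tx$''. The latter is precisely the definition of $T$ being order continuous. Chaining these equivalences yields that $T$ is order bounded if and only if $T$ is order continuous.

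There is no real obstacle in this argument: it is a direct concatenation of the preceding theorem with the cited coincidence of the two convergences. The one conceptual point worth stressing is that order continuity of the \emph{domain} is exactly what collapses relative uniform convergence into order convergence, thereby turning condition (iii) of the theorem into the ordinary notion of order continuity.
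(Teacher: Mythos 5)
Your proposal is correct and is exactly the argument the paper intends: it combines the equivalence of order boundedness with relative uniformly-to-order continuity (condition (iii) of the preceding theorem, which applies since Banach lattices are Archimedean) with the cited fact from~\cite{Taylor:20} that order convergence and relative uniform convergence coincide on an order continuous Banach lattice, so that condition (iii) becomes literally the definition of order continuity. The paper leaves this concatenation implicit (``Combining this with the preceding theorem\dots''), and your write-up simply makes it explicit.
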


In the filter language, it is easy to see that $\mathcal F\goesu 0$
iff there exists $u\in X_+$ such that $\mathcal
N_0[-u,u]\subseteq\mathcal F$. Here, as before, $\mathcal N_0$ stands
for the neighborhood filter of zero in~$\mathbb R$. Let $\mathcal G$
be the filter generated by  $\mathcal
N_0[-u,u]$; it is easy to see that the
order intervals $\bigl[-\frac1n u,\frac1nu\bigr]$ as $n\in\mathbb N$
form a base of~$\mathcal G$. This proves the following:

\begin{proposition}
  Relative uniform convergence of an Archimedean vector lattice is
  first countable.
\end{proposition}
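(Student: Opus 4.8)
The plan is to read off first countability directly from the filter-language characterisation of relative uniform convergence recorded in the paragraph immediately preceding the statement, using also that relative uniform convergence is a \emph{linear} (hence translation invariant) convergence structure in the Archimedean setting. Since first countability must hold at every point, I would first reduce to the point~$0$. If $\mathcal F\goesu x$ then $\mathcal F-x\goesu 0$ by translation invariance; so if I can produce a filter $\mathcal G_0$ with a countable base such that $\mathcal G_0\subseteq\mathcal F-x$ and $\mathcal G_0\goesu 0$, then $\mathcal G_0+x$ is a filter with a countable base, $\mathcal G_0+x\subseteq\mathcal F$, and $\mathcal G_0+x\goesu x$. Thus it suffices to establish first countability at~$0$.

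For the point~$0$, suppose $\mathcal F\goesu 0$. By the characterisation just recalled, there is some $u\in X_+$ with $\mathcal N_0[-u,u]\subseteq\mathcal F$. I would then take $\mathcal G=[\mathcal N_0[-u,u]]$, the filter generated by $\mathcal N_0[-u,u]$, and verify the three requirements of the definition of first countability. First, $\mathcal G\subseteq\mathcal F$ because $\mathcal F$ is a filter containing the generating collection $\mathcal N_0[-u,u]$. Second, $\mathcal G\goesu 0$, again by the same characterisation, since $\mathcal N_0[-u,u]\subseteq\mathcal G$. Third, $\mathcal G$ has a countable base, namely $\bigl\{[-\tfrac1n u,\tfrac1n u]\mid n\in\mathbb N\bigr\}$, exactly as asserted in the preceding paragraph. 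These three facts furnish precisely the filter $\mathcal G$ demanded by first countability at~$0$.

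The argument is essentially immediate from the setup, so there is no serious obstacle; the only points needing a moment's care are the translation reduction (which rests on linearity, guaranteed here by the Archimedean hypothesis) and the verification that the intervals $[-\tfrac1n u,\tfrac1n u]$ genuinely form a base of~$\mathcal G$. For the latter I would note that $[-\tfrac1n,\tfrac1n]\in\mathcal N_0$ gives $[-\tfrac1n u,\tfrac1n u]=[-\tfrac1n,\tfrac1n][-u,u]\in\mathcal N_0[-u,u]$, so each such interval lies in the generating collection, while conversely any $U[-u,u]\in\mathcal N_0[-u,u]$ contains $[-\tfrac1n u,\tfrac1n u]$ once $n$ is chosen with $[-\tfrac1n,\tfrac1n]\subseteq U$. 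This confirms the countable base and completes the plan.
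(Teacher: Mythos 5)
Your proposal is correct and is essentially the paper's own argument: the paragraph preceding the proposition (the characterisation $\mathcal F\goesu 0$ iff $\mathcal N_0[-u,u]\subseteq\mathcal F$ for some $u\in X_+$, together with the observation that the intervals $\bigl[-\tfrac1n u,\tfrac1n u\bigr]$ form a countable base of the filter generated by $\mathcal N_0[-u,u]$) is exactly the proof the paper intends, ending with ``This proves the following.'' Your only additions --- making the translation reduction to the point $0$ explicit via linearity, and verifying that the intervals genuinely form a base --- are correct fillings-in of steps the paper leaves implicit.
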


It was also observed in \cite{Taylor:20} that if $Y$ is a relative uniformly
closed sublattice of a relative uniformly complete vector lattice $X$
and $(x_k)$ is a sequence in~$Y$, then $x_k\goesu 0$ in $Y$ iff
$x_k\goesu 0$ in~$X$.

\subsection*{Order Convergence and Topology}
As we mentioned in the beginning of the paper, Ordman observed
in~\cite{Ordman:66} that almost everywhere convergence of measurable
functions is not topological. The argument is very simple; we
reproduce it here for the convenience of the reader:

\begin{example}\label{typewriter}
  Consider the space $L_0[0,1]$ of all Lebesgue measurable functions
  on $[0,1]$ equipped with convergence almost everywhere.  Let $s_0=0$
  and $s_n=\sum_{k=1}^n\frac1k$ for $n\in\mathbb N$, then put
  $A_n=[s_{n-1},s_n]\mod 1$. Then $A_n\subseteq[0,1]$ and the Lebesgue
  measure of $A_n$ is~$\frac1n$. The sequence $(x_n)$ defined by
  $x_n=\one_{A_n}$ is called the \term{typewriter sequence}. It is
  easy to see that every subsequence of $(x_n)$ has a further
  subsequence that converges to zero. If the convergence were
  topological, we would have $x_n\to 0$, but the latter is false.
\end{example}

For sequences in $L_0[0,1]$, order convergence agrees with convergence
almost everywhere, hence the typewriter sequence from the example
shows that order convergence is generally not topological. We will
show that order convergence may be very far from being topological; it
is topological iff the space is finite-dimensional. Moreover, its
topological modification is the order topology, which may be very
coarse: it need not be linear or Hausdorff.

In vector lattice theory, a subset $A$ of a vector lattice is said to
be \term{order closed} if $x_\alpha\goeso x$ and
$\{x_\alpha\}\subseteq A$ imply $x\in A$; this agrees with the
definition of closed sets in the order convergence structure. The
collection of all order closed sets forms a topology;
following~\cite[p.~80]{Luxemburg:71}, we call it the \term{order
topology}. In the language of convergence structures, this is
precisely the topological modification of the order convergence
structure; we will denote this topology by~$\tau_o$. By
Proposition~\ref{tmod-finest}, $\tau_o$ is the finest topology on $X$
whose convergence is weaker than order convergence.

It is easy to see that the order topology is translation
invariant, and it follows immediately from the definition that points are
closed. By Theorem~5.1 on pp.~34--35 of \cite{Kelley:76}, a
topological vector space is Hausdorff iff points are
closed. Therefore, if $\tau_o$ is linear, it is automatically
Hausdorff. The following example shows the order topology need
not be Hausdorff or linear; cf.\
\cite[p.~146]{Vladimirov:69}. 

\begin{example}\label{otop-nonHaus}
    \emph{Order topology on $C[0,1]$ is not Hausdorff and not linear.}
  Suppose it is Hausdorff. Let $U$ and $V$ be open neighbourhoods of
  $0$ and of~$\one$, respectively, with $U\cap V=\varnothing$. Let
  $(t_k)$ be an enumeration of the rational numbers in $(0,1)$. For
  each $k,n\in\mathbb N$, let $x_{k,n}$ be the continuous function
  such that $x_{k,n}(t_k)=1$, $x_{n,k}$ vanishes outside of
  $\bigl[t_k-\frac1n,t_k+\frac1n\bigr]$, and is linear on
  $\bigl[t_k-\frac1n,t_k\bigr]$ and on
  $\bigl[t_k,t_k+\frac1n\bigr]$. For each fixed~$k$, we clearly have
  $x_{k,n}\downarrow 0$ as $n\to\infty$. It follows that
  $x_{k,n}\goeso 0$ and, therefore, $x_{k,n}\xrightarrow{\tau_o}x$ as
  $n\to\infty$. Choose $n_1$ such that $y_1:=x_{1,n_1}\in U$. Next we observe
  that $y_1\vee x_{2,n}\downarrow y_1$. It follows from $y_1\in U$
  that $y_2:=y_1\vee x_{2,n_2}\in U$ for some~$n_2$. Iterating this
  process, we construct a sequences $(n_k)$ in $\mathbb N$ and $(y_k)$
  in $U$ such that $y_{k}=y_{k-1}\vee x_{k,n_k}$ for every $k>1$. It
  follows that $y_k\uparrow$ and $y_k(t_i)=1$ as $i=1,\dots,k$. This
  yields $y_k\uparrow\one$. Therefore, there exists $k_0$ such that
  $y_{k_0}\in V$, which contradicts $U$ and $V$ being disjoint.

  So the order topology on $C[0,1]$ can fail to be Hausdorff and, therefore,
  is not generally a linear topology.
\end{example}

While the preceding example shows that the order topology may be rather
poor, there are situations where it is ``nice''.
Let $X$ be Banach lattice. It is well known that every norm convergent
sequence in $X$ has a subsequence that converges in order to the same
limit. It follows that every order closed set in $X$ is norm
closed: so the norm topology on $X$ is stronger than its order
topology. If, in addition, $X$ is order continuous, a
set is norm closed iff it is order closed. Thus, the topological
modification of order convergence
agrees with the norm topology in order continuous
Banach lattices.

We now provide a useful characterization of order neighbourhoods,
which are sometimes called \emph{net-catching sets} in the literature.

\begin{proposition}
  Let $U$ be a subset of a vector lattice $X$ and $x\in X$. The
  following are equivalent:
  \begin{enumerate}
  \item\label{o-nbhd-nbhd}  $U$ is an order neighborhood of~$x$;
  \item\label{o-nbhd-net-catch} if $u_\alpha\downarrow 0$ then there
    exists $\alpha_0$ such that $[x-u_\alpha,x+u_\alpha]\subseteq U$
    for all $\alpha\ge\alpha_0$.
 \end{enumerate}
\end{proposition}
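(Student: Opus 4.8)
The plan is to prove the two implications separately, beginning with the more direct direction \eqref{o-nbhd-net-catch}$\Rightarrow$\eqref{o-nbhd-nbhd}. Assuming \eqref{o-nbhd-net-catch}, I would first note that applying it to the constant net $u_\alpha=0$ (which trivially satisfies $u_\alpha\downarrow 0$) forces $\{x\}=[x-u_\alpha,x+u_\alpha]\subseteq U$, so that $x\in U$, as is required for $U$ to be a neighborhood. Then, given any net with $x_\alpha\goeso x$, Definition~\ref{def:oconv} supplies a dominating net $(u_\gamma)_{\gamma\in\Gamma}$ with $u_\gamma\downarrow 0$ for which every interval $[x-u_\gamma,x+u_\gamma]$ contains a tail of $(x_\alpha)$. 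Applying \eqref{o-nbhd-net-catch} to $(u_\gamma)$ yields $\gamma_0$ with $[x-u_{\gamma_0},x+u_{\gamma_0}]\subseteq U$; since this interval already contains a tail of $(x_\alpha)$, that tail lies in $U$, proving $U$ is an order neighborhood of~$x$.

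For the converse \eqref{o-nbhd-nbhd}$\Rightarrow$\eqref{o-nbhd-net-catch} I would argue by contradiction. Suppose $U$ is an order neighborhood of $x$ but \eqref{o-nbhd-net-catch} fails for some net $(u_\alpha)_{\alpha\in A}$ with $u_\alpha\downarrow 0$. The failure says precisely that the set $B=\{\alpha\in A\mid[x-u_\alpha,x+u_\alpha]\not\subseteq U\}$ is cofinal in~$A$. For each $\alpha\in B$ I would choose a witness $y_\alpha\in[x-u_\alpha,x+u_\alpha]\setminus U$, so that $\abs{y_\alpha-x}\le u_\alpha$ while $y_\alpha\notin U$.

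The crux is then to check that $(y_\alpha)_{\alpha\in B}$ is a net converging in order to~$x$. First, $B$ is directed, being a cofinal subset of the directed set~$A$. Second, the restriction $(u_\alpha)_{\alpha\in B}$ is still decreasing and satisfies $\inf_{\alpha\in B}u_\alpha=0$: any lower bound of $\{u_\alpha\mid\alpha\in B\}$ is, by cofinality, a lower bound of $\{u_\alpha\mid\alpha\in A\}$ as well, hence $\le\inf_{\alpha\in A}u_\alpha=0$. Thus $(u_\alpha)_{\alpha\in B}\downarrow 0$ serves as a dominating net for $(y_\alpha)_{\alpha\in B}$, because $\abs{y_\alpha-x}\le u_\alpha$ and the $u_\alpha$ decrease; therefore $y_\alpha\goeso x$. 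Since no $y_\alpha$ lies in $U$, no tail of $(y_\alpha)$ is contained in $U$, contradicting the neighborhood property of~$U$.

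I expect the main obstacle to be exactly this final verification: confirming that the net built from the ``bad'' indices really does order-converge to~$x$. This rests on the two cofinality observations---that $B$ inherits directedness from $A$, and that restricting the decreasing net $(u_\alpha)$ to the cofinal subset $B$ preserves $\downarrow 0$---each of which reduces to the elementary principle that a lower bound taken along a cofinal subset is a lower bound along the whole net.
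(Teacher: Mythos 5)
Your proof is correct and follows essentially the same route as the paper's: the forward direction applies condition (ii) to the dominating net from Definition~\ref{def:oconv}, and the converse builds a net of witnesses $y_\alpha\in[x-u_\alpha,x+u_\alpha]\setminus U$ that order-converges to $x$ yet avoids $U$. Your only deviations are refinements: you restrict to the cofinal set $B$ of bad indices and verify directedness and $\inf_{\alpha\in B}u_\alpha=0$ explicitly (the paper instead tacitly uses that, by monotonicity of $(u_\alpha)$, cofinally many bad indices force \emph{every} index to be bad), and you check $x\in U$ via the constant net, a point the paper leaves implicit.
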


\begin{proof}
 Without loss of generality we may assume $x=0$.

  \eqref{o-nbhd-nbhd}$\Rightarrow$\eqref{o-nbhd-net-catch} Suppose
  that $U$ satisfies \eqref{o-nbhd-nbhd} but fails
  \eqref{o-nbhd-net-catch}. Then there exists a net $(u_\alpha)$ such that
  $u_\alpha\downarrow 0$ but for every~$\alpha$, the interval
  $[-u_\alpha,u_\alpha]$ is not entirely contained in~$U$. We can then
  find $x_\alpha$ in this interval such that $x_\alpha\notin U$. This
  results in a net $(x_\alpha)$ such that $x_\alpha\goeso 0$, yet no
  $x_\alpha$ is in~$U$; a contradiction.
  
  \eqref{o-nbhd-net-catch}$\Rightarrow$\eqref{o-nbhd-nbhd} Suppose
  that $(x_\alpha)_{\alpha\in\Lambda}\to 0$. Let
  $(u_\gamma)_{\gamma\in\Gamma}$ be a dominating net as in
  Definition~\ref{def:oconv}. By assumption, there exists $\gamma_0$
  such that $[-u_{\gamma_0},u_{\gamma_0}]\subseteq U$. On the other
  hand, a tail of $(x_\alpha)$ is contained in
  $[-u_{\gamma_0},u_{\gamma_0}]$, and, therefore in~$U$.
\end{proof}

It was shown in~\cite{Dabboorasad:20} that order convergence is
topological precisely in finite-dimensional
spaces. We will now provide a short proof of this fact.

\begin{lemma}\label{top-to-o-conv-sunit}
  Let $\tau$ be a linear topology on a vector lattice~$X$. If
  $\tau$-convergence is stronger than order convergence then $X$ is
  Archimedean, has a strong unit~$e$, and $x_\alpha\goestau 0$ implies
  $\norm{x_\alpha}_e\to 0$.
\end{lemma}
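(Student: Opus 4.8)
The plan is to derive the three conclusions in the stated order, using only that $\tau$ is a linear topology together with the hypothesis in its most convenient form: every $\tau$-convergent net is order convergent to the same limit.

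First I would show $X$ is Archimedean. Fix $u\in X_+$. Since scalar multiplication is jointly continuous for the linear topology $\tau$ and $\frac1n\to 0$ in $\mathbb R$, we have $\frac1n u\goestau 0$, so by hypothesis $\frac1n u\goeso 0$. The sequence $(\frac1n u)$ is decreasing, and for a decreasing net order convergence to $0$ is the same as $\frac1n u\downarrow 0$; that is, $\inf_n\frac1n u=0$. As $u\in X_+$ was arbitrary, this is precisely the Archimedean property.

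The heart of the matter is producing the strong unit, and this is the step I expect to be the main obstacle. The key observation is that the $\tau$-neighborhood filter $\mathcal N$ of $0$ trivially $\tau$-converges to $0$, so the hypothesis forces $\mathcal N\goeso 0$ as a filter. Concretely, using Proposition~\ref{tfb-net} choose an admissible net $(w_\beta)$ whose tail filter is $\mathcal N$; then $w_\beta\goestau 0$, hence $w_\beta\goeso 0$. By Definition~\ref{def:oconv} there is a net $u_\gamma\downarrow 0$ such that each order interval $[-u_\gamma,u_\gamma]$ contains a tail of $(w_\beta)$. Since the tail sets of $(w_\beta)$ are exactly the members of $\mathcal N$, every such $[-u_\gamma,u_\gamma]$ is a superset of a $\tau$-neighborhood of $0$, hence is itself a $\tau$-neighborhood of $0$. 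Fixing one index $\gamma_0$ and setting $e=u_{\gamma_0}\ge 0$, the interval $[-e,e]$ is a $\tau$-neighborhood of $0$. Now every neighborhood of $0$ in a linear topology is absorbing, so for each $x\in X$ there is $\lambda>0$ with $x\in\lambda[-e,e]=[-\lambda e,\lambda e]$, i.e. $\abs{x}\le\lambda e$. Thus $I_e=X$ and $e$ is a strong unit (with $e>0$ unless $X=\{0\}$, since $\{0\}$ cannot be $\tau$-open in a nonzero linear topology).

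Finally, the relative uniform estimate falls out of the same neighborhood. Because $\tau$ is linear, for each $\varepsilon>0$ the set $\varepsilon[-e,e]=[-\varepsilon e,\varepsilon e]$ is again a $\tau$-neighborhood of $0$. Hence if $x_\alpha\goestau 0$, then for every $\varepsilon>0$ a tail of $(x_\alpha)$ lies in $[-\varepsilon e,\varepsilon e]$, which says $\abs{x_\alpha}\le\varepsilon e$ eventually, i.e. $\norm{x_\alpha}_e\le\varepsilon$ eventually. As $\varepsilon$ was arbitrary, $\norm{x_\alpha}_e\to 0$, completing the proof. I would note that steps two and three are really the observation that $[-e,e]$ being a $\tau$-neighborhood makes $\tau$ finer than the $\norm{\cdot}_e$-topology, so the only genuine work is extracting a single symmetric order interval into $\mathcal N$ from $\mathcal N\goeso 0$.
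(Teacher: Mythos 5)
Your proposal is correct and follows essentially the same route as the paper's proof: the Archimedean property via $\frac1nu\goestau 0\Rightarrow\frac1nu\goeso 0$, the strong unit by noting that the $\tau$-neighborhood filter of zero must order-converge to zero and hence yields an absorbing order interval, and the final estimate by scaling $[-e,e]$. The only difference is cosmetic: where the paper cites Corollary~\ref{o-loc-bdd} (local boundedness of the order convergence structure, which is why it establishes the Archimedean property first) to extract an order bounded $U\in\mathcal N_0$ with $U\subseteq[-e,e]$, you inline that step via Proposition~\ref{tfb-net} and Definition~\ref{def:oconv}, obtaining the marginally stronger observation that $[-e,e]$ is itself a $\tau$-neighborhood of zero.
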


\begin{proof}
  To show that $X$ is Archimedean, suppose
  $0\le x\le\frac{1}{n}u$ for all $n\in\mathbb N$. Since scalar
  multiplication is $\tau$-continuous, we have
  $\frac{1}{n}u\goestau 0$, which yields $\frac{1}{n}u\goeso 0$ and,
  therefore, $x=0$.
  
  Let $\mathcal N_0$ be the filter of zero neighborhoods for~$\tau$. Then
  we have $\mathcal N_0\goestau 0$ and, therefore,
  $\mathcal N_0\goeso 0$. Since order convergence is locally bounded
  by Corollary~\ref{o-loc-bdd}, $\mathcal N_0$ contains an order
  bounded member. That is, there exists a $U\in\mathcal N_0$ and
  $e\in X_+$ such that $U\subseteq[-e,e]$. It follows that $e$ is a
  strong unit.
  
  To prove the last claim, suppose that $x_\alpha\goestau 0$ for some
  net $(x_\alpha)$. Fix $\varepsilon>0$ and let $U$ be as above. Then
  $\varepsilon U\in\mathcal N_0$ and there exists $\alpha_0$ such that
  $x_\alpha\in\varepsilon U$ whenever $\alpha\ge\alpha_0$. It follows
  that $\abs{x_\alpha}\le\varepsilon e$; i.e.,
  $\norm{x_\alpha}_e\le\varepsilon$. 
\end{proof}

\begin{theorem}\label{oconv-lin-top}
  Order convergence on a vector lattice $X$ arises from a linear
  topology iff $X$ is finite-dimensional and Archimedean. 
\end{theorem}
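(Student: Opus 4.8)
The plan is to prove both implications, handling the easy converse first and then deriving the forward direction from the already-established Lemma~\ref{top-to-o-conv-sunit}.

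For the converse, suppose $X$ is finite-dimensional and Archimedean. Then $X$ is lattice isomorphic to $\mathbb R^n$ with the coordinatewise order, so it suffices to check the claim there. In $\mathbb R^n$ one verifies directly that $x_\alpha\goeso x$ if and only if $x_\alpha\to x$ coordinatewise: a dominating net $u_\gamma\downarrow 0$ forces each coordinate of $x_\alpha-x$ to zero, while conversely, if $x_\alpha\to x$ coordinatewise, then over a tail on which the net is bounded the net $u_\alpha:=\sup_{\beta\ge\alpha}\abs{x_\beta-x}$ satisfies $u_\alpha\downarrow 0$ and dominates, witnessing $x_\alpha\goeso x$. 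Hence order convergence coincides with convergence in the Euclidean topology, which is linear.

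For the forward direction, assume order convergence arises from a linear topology $\tau$, so that $\goestau$ and $\goeso$ coincide. Since $\tau$-convergence is then (trivially) stronger than order convergence, Lemma~\ref{top-to-o-conv-sunit} applies and yields that $X$ is Archimedean, possesses a strong unit $e$, and $x_\alpha\goestau 0$ implies $\norm{x_\alpha}_e\to 0$. Because $\goestau$ equals $\goeso$, we conclude that $x_\alpha\goeso 0$ implies $\norm{x_\alpha}_e\to 0$; the reverse implication always holds because $\norm{\cdot}_e$-convergence is a form of relative uniform convergence, which is stronger than order convergence (Theorem~\ref{u-Mack-o}). Thus order convergence agrees with $\norm{\cdot}_e$-convergence, a genuine norm since $X$ is Archimedean.

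It remains to show $X$ is finite-dimensional, and I would argue by contradiction. Assuming $X$ is infinite-dimensional, it contains a sequence $(d_n)$ of nonzero pairwise disjoint positive elements; after normalizing we may take $\norm{d_n}_e=1$, so that $0<d_n\le e$. Setting $s_n=\bigvee_{i=1}^n d_i$ gives an increasing net bounded above by $e$, which is therefore order Cauchy by the lemma that every monotone order bounded net in an Archimedean vector lattice is order Cauchy. The implication from order convergence to $\norm{\cdot}_e$-convergence established above then forces $(s_n)$ to be $\norm{\cdot}_e$-Cauchy, which is the desired contradiction: by disjointness $s_{n+1}-s_n=d_{n+1}$, so $\norm{s_{n+1}-s_n}_e=\norm{d_{n+1}}_e=1$ for every $n$. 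Hence $X$ must be finite-dimensional.

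The main obstacle is the structural fact invoked at the start of the last paragraph: that an infinite-dimensional Archimedean vector lattice must contain an infinite sequence of nonzero, pairwise disjoint elements. This is standard vector-lattice input rather than a formal consequence of the convergence axioms, and its contrapositive is the crux: if, below some fixed positive element, no two disjoint nonzero elements exist, then the identity $\bigl(a-a\wedge b\bigr)\wedge\bigl(b-a\wedge b\bigr)=0$ shows that principal ideal is totally ordered, hence one-dimensional, and a uniform bound on the size of disjoint systems together with the strong unit $e$ confines $X$ to finite dimension. I expect verifying this fact (or citing it cleanly) to be the only genuinely non-routine step.
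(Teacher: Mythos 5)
Your proof is correct, and while the frame is the same as the paper's (the converse via $X\cong\mathbb R^n$, and Lemma~\ref{top-to-o-conv-sunit} supplying the Archimedean property, the strong unit~$e$, and the implication from order convergence to $\norm{\cdot}_e$-convergence), your contradiction argument for infinite-dimensionality takes a genuinely different route. The paper normalizes a disjoint sequence $(x_n)$ with $\norm{x_n}_e=1$, observes that as an order bounded disjoint sequence in $X^\delta$ it is order null there, and pulls this back to $X$ via Theorem~\ref{AS} to contradict $\norm{x_n}_e\to 0$. You instead form the partial suprema $s_n=\bigvee_{i=1}^n d_i$, which are increasing and bounded by~$e$ (note $\norm{d_n}_e=1$ gives $d_n\le\lambda e$ for all $\lambda>1$, hence $d_n\le e$ by the Archimedean property — a small step worth making explicit), invoke the paper's earlier lemma that monotone order bounded nets are order Cauchy, apply the order-to-$\norm{\cdot}_e$ implication to the double net $(s_n-s_m)$ to get $\norm{\cdot}_e$-Cauchyness, and contradict it with the disjointness computation $s_{n+1}-s_n=d_{n+1}$, so $\norm{s_{n+1}-s_n}_e=1$. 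Your version trades the paper's reliance on Theorem~\ref{AS} together with the (uncited) fact that order bounded disjoint sequences are order null in an order complete lattice for a simple telescoping identity; the order completion is not truly avoided, since the monotone-Cauchy lemma's proof passes through $X^\delta$, but the disjointness input you need is more elementary. Both proofs rest on the same unproved structural fact — that an infinite-dimensional Archimedean vector lattice contains an infinite disjoint sequence of nonzero vectors — which the paper also uses without justification; you correctly identify this as the crux and your sketch of it (no two nonzero disjoint elements below a fixed positive element forces a totally ordered, hence by Archimedeanity one-dimensional, principal ideal) is the standard argument, so flagging it as citable standard input is appropriate.
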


\begin{proof}
  Let $X$ be a finite-dimensional Archimedean vector lattice. Then $X$
  is lattice-isomorphic to $\mathbb R^n$ for some~$n$. So, without
  loss of generality, we assume $X=\mathbb R^n$. Then order
  convergence agrees with the coordinate-wise convergence, which is
  topological.

  Suppose that there is a linear topology $\tau$ on $X$ with
  $x_\alpha\goestau 0$ iff $x_\alpha\goeso 0$ for every net
  $(x_\alpha)$ in~$X$. By Theorem~\ref{top-to-o-conv-sunit} $X$ is
  Archimedean, has a strong unit~$e$, and $x_\alpha\goestau 0$ implies
  $\norm{x_\alpha}_e\to 0$. For the sake of contradiction, assume that
  $X$ is infinite-dimensional. Then $X$ contains an infinite disjoint
  sequence of non-zero vectors, say, $(x_n)$. Without loss of
  generality, we may assume $\norm{x_n}_e=1$ for each~$n$. Viewed as
  an order bounded disjoint sequence in~$X^\delta$, $(x_n)$ is order
  null in~$X^\delta$. Then $x_n\goeso 0$ in~$X$ by
  Theorem~\ref{AS}. Our assumption gives $x_n\goestau 0$; hence
  $\norm{x_n}_e\to 0$ by Lemma~\ref{top-to-o-conv-sunit}, a
  contradiction.
\end{proof}

\begin{corollary}\label{order-topol}
  Order convergence on an Archimedean vector lattice $X$ is
  topological iff $X$ is finite-dimensional. 
\end{corollary}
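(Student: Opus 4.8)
The plan is to deduce the corollary from Theorem~\ref{oconv-lin-top}, the only gap being that the theorem is phrased in terms of \emph{linear} topologies whereas the corollary asks merely that order convergence be topological. Throughout I fix an Archimedean vector lattice $X$ and use the fact, established earlier, that order convergence on an Archimedean vector lattice is a linear convergence structure. For the easy direction I would simply observe that if $X$ is finite-dimensional then Theorem~\ref{oconv-lin-top} already produces a linear topology from which order convergence arises; in particular order convergence then equals its own topological modification, i.e.\ it is topological.

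The substance lies in the converse. Suppose order convergence is topological, so that it coincides with the convergence of its topological modification $\tau_o$. By Theorem~\ref{oconv-lin-top} it suffices to verify that $\tau_o$ is a \emph{linear} topology, for then $X$ must be finite-dimensional. To see this I would transport the joint continuity of the vector operations from the order convergence structure to $\tau_o$. Concretely, let $(\lambda_\alpha, x_\alpha)$ converge to $(\lambda, x)$ in the product topology on $\mathbb{R}\times X$, where $\mathbb{R}$ carries its usual topology and $X$ carries $\tau_o$. Since a net converges in a product of topological spaces precisely when each coordinate converges, this means $\lambda_\alpha\to\lambda$ in $\mathbb{R}$ and $x_\alpha\xrightarrow{\tau_o} x$, i.e.\ $x_\alpha\goeso x$. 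Linearity of order convergence then gives $\lambda_\alpha x_\alpha\goeso\lambda x$, hence $\lambda_\alpha x_\alpha\xrightarrow{\tau_o}\lambda x$; as $(X,\tau_o)$ is a genuine topological space, this is exactly topological joint continuity of scalar multiplication. Running the same argument for addition $X\times X\to X$ shows that $\tau_o$ is a linear topology, and Theorem~\ref{oconv-lin-top} then forces $X$ to be finite-dimensional.

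The one delicate point, which I expect to be the main obstacle, is precisely this transfer step: it hinges on the standard fact that for topological factors convergence of a net in the product topology is equivalent to coordinatewise convergence, and this is what lets me identify product-topology convergence with the product convergence structure used to define joint continuity (that is, to reconcile ``$\tau_o$ is a linear topology'' with ``order convergence is a linear convergence structure''). Once this identification is in place the desired equivalence follows directly from the theorem, and no further vector-lattice input is required.
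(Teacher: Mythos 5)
Your proposal is correct and takes essentially the same route as the paper: the paper's proof of the converse also deduces from the joint continuity of addition and scalar multiplication in the (Archimedean) order convergence structure that any topology inducing order convergence must be a linear topology, and then applies Theorem~\ref{oconv-lin-top}. Your explicit transfer step via coordinatewise net convergence in the product topology is precisely the detail the paper compresses into the phrase ``it follows that $\tau$ is linear,'' so nothing essential differs.
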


\begin{proof}
  Suppose that order convergence on $X$ comes from a
  topology~$\tau$. Since addition and scalar multiplication are
  jointly continuous with respect to order convergence, it follows
  that $\tau$ is linear. Now apply Theorem~\ref{oconv-lin-top}.
\end{proof}

\begin{example}
  Consider $X=\mathbb R^2$ with the lexicographic order. Note that $X$ fails
  the Archimedean Property. The sequence $(\frac{1}{n},0)$ converges
  to zero in every Hausdorff linear topology on~$X$, yet it does not
  converge to zero in order.
\end{example}

\begin{proposition}\label{unif-topol}
  Relative uniform convergence on an Archimedean vector lattice $X$ is
  topological iff $X$ has a strong unit.
\end{proposition}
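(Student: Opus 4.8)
The plan is to treat the two implications separately, with the forward direction (strong unit $\Rightarrow$ topological) resting on identifying relative uniform convergence with an honest norm convergence, and the reverse direction (topological $\Rightarrow$ strong unit) resting on the filter characterization of relative uniform convergence together with the fact that neighbourhoods of zero in a linear topology are absorbing.

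First I would handle the forward implication. Suppose $X$ has a strong unit $e$; then $\norm{\cdot}_e$ is a genuine norm on all of $X$, because $X$ is Archimedean and $I_e=X$. I claim that $x_\alpha\goesu x$ if and only if $\norm{x_\alpha-x}_e\to 0$, so that relative uniform convergence is exactly the norm convergence of $\norm{\cdot}_e$ and is therefore topological. One direction is immediate from the definition, taking $e$ itself as the regulator. For the other, if $x_\alpha\goesu x$ with some regulator $g\in X_+$, then since $e$ is a strong unit there is $\lambda>0$ with $g\le\lambda e$; hence $\abs{x_\alpha-x}\le\varepsilon g\le\varepsilon\lambda e$ eventually, giving $\norm{x_\alpha-x}_e\le\varepsilon\lambda$ and thus $\norm{x_\alpha-x}_e\to 0$.

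Next I would prove the reverse implication. Assume relative uniform convergence arises from a topology $\tau$. Since relative uniform convergence is a linear convergence structure on the Archimedean lattice $X$, and the product net convergence of a topological structure corresponds to the product topology, joint continuity of addition and scalar multiplication forces $\tau$ to be a linear topology; this is the same reasoning used in Corollary~\ref{order-topol}. Now the $\tau$-neighbourhood filter $\mathcal N_0$ of zero satisfies $\mathcal N_0\goestau 0$ and hence $\mathcal N_0\goesu 0$. Applying the filter characterization of relative uniform convergence established just after Theorem~\ref{u-Mack-o}, there is $e\in X_+$ with $[-\tfrac1n e,\tfrac1n e]\in\mathcal N_0$ for every $n$; in particular $[-e,e]$ is a $\tau$-neighbourhood of zero. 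Because $\tau$ is linear, every neighbourhood of zero is absorbing, so $[-e,e]$ absorbs each $x\in X$: there is $t>0$ with $\abs{x}\le t e$. This says precisely that $e$ is a strong unit, completing the argument.

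I expect the only real subtlety to be the forward direction's claim that the single regulator $e$ suffices to witness every relatively uniformly convergent net, i.e.\ that the choice of regulator can always be upgraded to the strong unit; this is where the hypothesis does all the work. The reverse direction is largely bookkeeping, the one point needing care being the passage from a linear convergence structure to a linear topology, which I would justify exactly as in Corollary~\ref{order-topol}.
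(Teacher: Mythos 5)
Your proof is correct, but the interesting half goes by a different route than the paper. The paper disposes of both directions in two lines: for ``topological $\Rightarrow$ strong unit'' it invokes Lemma~\ref{top-to-o-conv-sunit}, which detours through \emph{order} convergence --- since $\tau$-convergence equals relative uniform convergence, it is stronger than order convergence, so $\mathcal N_0\goeso 0$, and local boundedness of the order convergence structure (Corollary~\ref{o-loc-bdd}) puts an order bounded set $U\subseteq[-e,e]$ inside $\mathcal N_0$, from which $e$ is a strong unit. You instead stay entirely within relative uniform convergence: from $\mathcal N_0\goesu 0$ and the filter characterization following Theorem~\ref{u-Mack-o} you get $[-e,e]\in\mathcal N_0$ directly, and then conclude via absorbency of zero neighbourhoods. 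Both arguments pivot on the same mechanism (the zero neighbourhood filter converges in the nontopological structure, hence contains an order interval, and absorbency upgrades $e$ to a strong unit), but yours is self-contained for this proposition and avoids the order-convergence machinery, while the paper's lemma earns its keep by being reusable --- it also carries Theorem~\ref{oconv-lin-top} and delivers the extra conclusion $\norm{x_\alpha}_e\to 0$. Two points in your favour on rigor: you make explicit the passage from a linear convergence structure to a \emph{linear} topology (the paper leaves this implicit here, though it spells out the identical step in Corollary~\ref{order-topol}), and your forward direction fills in the regulator-upgrade detail ($g\le\lambda e$, hence $\norm{x_\alpha-x}_e\le\varepsilon\lambda$) that the paper compresses into ``relative uniform convergence agrees with convergence in $\norm{\cdot}_e$-norm.''
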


\begin{proof}
  If relative uniform convergence arises from a topology, then $X$ has
  a strong unit by Lemma~\ref{top-to-o-conv-sunit}. Conversely, if $X$
  has a strong unit, say,~$e$, then relative uniform convergence
  agrees with convergence in $\norm{\cdot}_e$-norm.
\end{proof}

\begin{remark}
  It was observed in Proposition~3.1.3 of~\cite{Beattie:02} that a
  convergence vector space is topological iff it is
  pretopological. This implies that ``topological'' may be replaced
  with ``pretopological'' in Corollary~\ref{order-topol} and
  Proposition~\ref{unif-topol}. In particular, in an
  infinite-dimensional Archimedean vector lattice, an arbitrary mixing
  of order (or uniformly) convergent nets may spoil the convergence.
\end{remark}

\bigskip

In the last few years, several variants of \emph{unbounded
  convergences} came to prominence in vector lattice theory. These
provide more examples of non-topological convergence structures. We
will discuss unbounded convergences from the perspective of the theory 
of convergence structures in a separate paper.

\bigskip

{\bf Acknowledgements.}
The authors would like to thank M.~Taylor for valuable discussions.

\end{document}